\documentclass{mcom-l}

\usepackage{lipsum}
\usepackage{amsfonts}
\usepackage{amssymb}
\usepackage{graphicx}
\usepackage{epstopdf}
\usepackage{algpseudocode}
\usepackage{tensor}
\usepackage{verbatim}
\usepackage{mathtools}
\usepackage{framed}
\usepackage{tikz}
\usetikzlibrary{calc, shapes, angles, quotes, patterns}
\usetikzlibrary{decorations.pathreplacing, cd, patterns.meta}
\usepackage{subcaption}
\usepackage{bm}
\usepackage{esint}
\usepackage{dsfont}
\usepackage{mathrsfs}
\usepackage{aligned-overset}
\usepackage{hyperref}
\usepackage[nameinlink]{cleveref}

\newif\iflinenumbers
\linenumberstrue
\linenumbersfalse

\hypersetup{
	colorlinks = true,
	allcolors = green!50!black,
	urlcolor = red!50!black,
}

\newtheorem{theorem}{Theorem}[section]
\newtheorem{lemma}[theorem]{Lemma}

\theoremstyle{definition}

\theoremstyle{remark}
\newtheorem{remark}[theorem]{Remark}

\numberwithin{equation}{section}

\crefformat{equation}{\textup{#2(#1)#3}}
\crefrangeformat{equation}{\textup{#3(#1)#4--#5(#2)#6}}
\crefmultiformat{equation}{\textup{#2(#1)#3}}{ and \textup{#2(#1)#3}}
{, \textup{#2(#1)#3}}{, and \textup{#2(#1)#3}}
\crefrangemultiformat{equation}{\textup{#3(#1)#4--#5(#2)#6}}%
{ and \textup{#3(#1)#4--#5(#2)#6}}{, \textup{#3(#1)#4--#5(#2)#6}}{, and \textup{#3(#1)#4--#5(#2)#6}}

\Crefformat{equation}{#2Equation~\textup{(#1)}#3}
\Crefrangeformat{equation}{Equations~\textup{#3(#1)#4--#5(#2)#6}}
\Crefmultiformat{equation}{Equations~\textup{#2(#1)#3}}{ and \textup{#2(#1)#3}}
{, \textup{#2(#1)#3}}{, and \textup{#2(#1)#3}}
\Crefrangemultiformat{equation}{Equations~\textup{#3(#1)#4--#5(#2)#6}}%
{ and \textup{#3(#1)#4--#5(#2)#6}}{, \textup{#3(#1)#4--#5(#2)#6}}{, and \textup{#3(#1)#4--#5(#2)#6}}

\crefname{lemma}{Lemma}{Lemmas}
\crefname{theorem}{Theorem}{Theorems}
\crefname{corollary}{Corollary}{Corollaries}
\crefname{figure}{Figure}{Figures}
\Crefname{figure}{Figure}{Figures}
\crefname{table}{Table}{Tables}
\Crefname{table}{Table}{Tables}
\crefname{remark}{Remark}{Remarks}
\crefname{appendix}{appendix}{appendices}
\Crefname{appendix}{Appendix}{Appendices}

\newcommand{\harmonic}[1]{\mathfrak{#1}}
\newcommand{\unitvec}[1]{\hat{#1}}
\renewcommand{\d}[1]{\,\mathrm{d}{#1}}

\newcommand{\dual}[1]{{#1}'}

\usepackage{amsopn}
\let\div\undefined

\DeclareMathOperator{\GN}{GN}
\DeclareMathOperator{\CG}{CG}
\DeclareMathOperator{\DG}{DG}

\DeclareMathOperator{\curl}{curl}
\DeclareMathOperator{\grad}{grad}
\DeclareMathOperator{\hess}{hess}
\DeclareMathOperator{\div}{div}
\DeclareMathOperator{\rot}{rot}

\DeclareMathOperator{\mskw}{mskw}
\DeclareMathOperator{\dee}{d}
\DeclareMathOperator{\from}{\leftarrow}

\DeclareMathOperator{\spann}{span}

\DeclareMathOperator{\Tr}{Tr}
\DeclareMathOperator{\sym}{sym}
\DeclareMathOperator{\image}{im}

\DeclareMathOperator{\subsimplex}{\unlhd}
\DeclareMathOperator{\strictsubsimplex}{\lhd}
\DeclareMathOperator{\notsubsimplex}{\ntrianglelefteq}

\newcommand{\mesh}[0]{\mathcal{T}}
\newcommand{\boundarymesh}[0]{\mathcal{U}}

\iflinenumbers
	\usepackage[mathlines]{lineno}
\fi

\begin{document}

\title[Cohomology of FEM Stokes Complex]{%
  Cohomology of Finite Element Stokes Complexes on Alfeld Splits}


\author[P.~D.~Brubeck]{Pablo D.\ Brubeck}
\address{Mathematical Institute,
University of Oxford,
Oxford, UK}
\email{brubeckmarti@maths.ox.ac.uk}
\thanks{PB acknowledges that this work has received funding through the UKRI Digital
Research Infrastructure Programme through the Science and Technology Facilities
Council's Computational Science Centre for Research Communities (CoSeC)}

\author[Y.~Liang]{Yizhou Liang}
\address{Mathematical Institute,
	University of Oxford,
	Oxford, UK}
\email{yizhou.liang@maths.ox.ac.uk}
\thanks{YL was partly supported through a Royal Society 
University Research Fellowship (URF\textbackslash R1\textbackslash 221398, 
RF\textbackslash ERE\textbackslash 221047).}

\author[C.~Parker]{Charles Parker}
\address{U.S. Naval Research Laboratory,
	4555 Overlook Ave SW, Washington, DC 20375}
\email{charles.w.parker185.ctr@us.navy.mil}
\thanks{CP was supported by an appointment to the NRC Research
Associateship Program at the U.S. Naval Research Laboratory, 
administered by the Fellowships Office of the National Academies of Sciences, 
Engineering, and Medicine. Distribution Statement A.  Approved for public release: distribution is unlimited.}

\subjclass[2020]{Primary 65N30, 58J10, 65N12}

\date{}

\dedicatory{}

\iflinenumbers
	\linenumbers
\fi

\begin{abstract}
  We show that the cohomology of the finite element Stokes complex consisting 
	of piecewise polynomials spaces on an Alfeld split mesh from 
	Fu, Guzm\'{a}n, \& Neilan 
	(2020, \textit{Math. Comp.}, \textbf{89}, 1059--1091) is 
	isomorphic to the cohomologies of the continuous Stokes and de Rham 
	complexes. 
	We also construct novel ``minimal'' conforming finite element complexes 
	where the $H^1$-conforming space is the lowest-order space from 
	Guzm\'{a}n \& Neilan 
	(2018, \textit{SIAM J. Numer. Anal.}, \textbf{56}, 2826--2844)
	and the $L^2$-conforming space is piecewise constants.
	These minimal complexes also have cohomologies isomorphic to the 
	continuous Stokes and de Rham complexes. We further construct local, 
	bounded, cochain projections for the minimal complexes.
	All the results hold for strongly Lipschitz domains with nontrivial 
	topologies and in the presence of mixed boundary conditions. 
\end{abstract}

\maketitle


\section{Introduction}
\label{sec:intro}

We consider conforming finite element 
discretizations of the Stokes complex with mixed boundary 
conditions. Let $\Omega \subset \mathbb{R}^3$ be a polyhedral domain whose
boundary is partitioned into suitably regular subsets $\Gamma_0$ and $\Gamma_1$,
and define the following spaces:
\begin{subequations}
	\begin{alignat}{2}
		\label{eq:hkd-first-def}
		H^k_{\Gamma_0}(\Omega) &:= 
		\{ \phi \in H^k(\Omega) : D^{\alpha} \phi|_{\Gamma_0} = 0,
		\ \forall |\alpha|  \leq k-1 \}, \qquad & &k \in \mathbb{N}, \\
		H^1_{\Gamma_0}(\Omega; \curl) &:= \{ 
		v \in H^1_{\Gamma_0}(\Omega)^3 : 
		\curl v \in H^1_{\Gamma_0}(\Omega)^3 \}, \qquad & &
	\end{alignat}
\end{subequations}
where we may drop the subscript to denote the case $|\Gamma_0| = 0$ and use the 
subscript ``0'' to denote $|\Gamma_1| = 0$.
These spaces fit into the so-called Stokes complex:
\begin{equation}
	\label{eq:stokes-complex-bcs}
	\begin{tikzcd}
		0 \arrow[r] 
		& H_{\Gamma_0}^2(\Omega) \arrow[r, "\grad"] 
		& H_{\Gamma_0}^1(\Omega; \curl) \arrow[r, "\curl"] 
		& H_{\Gamma_0}^1(\Omega)^3 \arrow[r, "\div"]
		& L^2(\Omega) \arrow[r]
		& 0.
	\end{tikzcd}
\end{equation}
In particular, that \cref{eq:stokes-complex-bcs} is a complex means that
the composition of any two operators is zero (e.g. $\curl \grad = 0$)
and that the image of each operator lies in the succeeding space
(e.g. $\curl H_{\Gamma_0}^1(\curl;\Omega) \subset H_{\Gamma_0}^1(\Omega)^3$). 
Problems involving the spaces in the complex
\cref{eq:stokes-complex-bcs} arise in a variety of 
applications. Many fourth-order problems involve the space $H^2(\Omega)$,
such as the separation of binary alloys \cite{CahnHill58} or 
displacement formulations of strain gradient theory \cite{Mindlin64} 
to name a few. The space $H^1(\curl; \Omega)$ appears in 
displacement formulations of couple stress theory \cite{ParkGao08}. 
The final two spaces most famously appear in incompressible flow.

A conforming
finite element discretization or subcomplex of \cref{eq:stokes-complex-bcs}
is another complex 
\begin{equation}
	\label{eq:stokes-complex-bcs-intro-discrete}
	\begin{tikzcd}
		0 \arrow[r] 
		& V^{0, h}_{\Gamma_0} \arrow[r, "\grad"] 
		& V^{1, h}_{\Gamma_0} \arrow[r, "\curl"] 
		& V^{2, h}_{\Gamma_0} \arrow[r, "\div"]
		& V^{3, h}_{\Gamma_0} \arrow[r]
		& 0,
	\end{tikzcd}
\end{equation}
where each space $V^{k, h}_{\Gamma_0}$ is a conforming finite element subspace 
of the corresponding space in \cref{eq:stokes-complex-bcs}.
Choosing finite element discretizations from an underlying complex 
\cref{eq:stokes-complex-bcs-intro-discrete} can offer many benefits. 
In velocity-pressure formulations of incompressible flow, 
taking $V^{2, h}_{\Gamma_0}$ and $V^{3, h}_{\Gamma_0}$ from 
\cref{eq:stokes-complex-bcs-intro-discrete} lead to mass 
conserving  and ``pressure-robust" discretizations 
\cite{JohnLinkeMerdonNeilanRebholz17} and schemes with uniform stability 
properties for time-dependent or singularly perturbed flows 
\cite{MardalTaiWinther02,TaiWinther06}. All four spaces from 
\cref{eq:stokes-complex-bcs-intro-discrete} can also be used to construct 
an energy and enstrophy stable scheme for the 
incompressible Navier-Stokes equations \cite[p. 168 eq. (10.42)]{Andrews25}.
The complex \cref{eq:stokes-complex-bcs-intro-discrete} also encodes
information that is useful for preconditioning parameter-dependent problems. 
For example, consider the weighted bilinear forms for 
$\alpha, \beta \in \mathbb{R}_+$:
\begin{subequations}
	\label{eq:riesz-maps}
	\begin{alignat}{2}
		&(\grad u, \alpha \grad v)_{L^2(\Omega)} 
		+ (\grad \curl u, \beta \grad \curl v)_{L^2(\Omega)}
		\qquad & &\forall u, v \in V_{\Gamma_0}^{1, h}, \\
		&(\grad u, \alpha \grad v)_{L^2(\Omega)} 
		+ (\div u, \beta \div v)_{L^2(\Omega)}
		\qquad & &\forall u, v \in V_{\Gamma_0}^{2, h},	
	\end{alignat}	
\end{subequations}
which arise in couple stress theory \cite{ParkGao08} and 
augmented Lagrangian preconditioning for incompressible flow 
\cite{BenziOlshankskii06,FarrellMitchellScottWechsung21}. 
Constructing preconditioners that are robust 
in the parameters $\alpha$ and $\beta$ typically requires knowledge 
of the kernel of $\curl : V_{\Gamma_0}^{1, h} \to V_{\Gamma_0}^{2, h}$
and $\div : V_{\Gamma_0}^{2, h} \to V_{\Gamma_0}^{3, h}$ \cite{LeeWuXuZikatanov07,Schoberl99},
which is precisely encoded in the algebraic structure of the complex 
\cref{eq:stokes-complex-bcs-intro-discrete}.

More specifically, the \textit{cohomology} of a complex consists of the kernel
of an operator modulo the range of the previous operator. For example,
the first and second cohomology of the discrete complex 
\cref{eq:stokes-complex-bcs-intro-discrete} are
\begin{align*}
	\mathfrak{H}^{1, h}_{\Gamma_0} := \frac{ \ker(\curl : V_{\Gamma_0}^{1, h} \to V_{\Gamma_0}^{2, h}) }{ 
		\image(\grad : V_{\Gamma_0}^{0, h} \to V_{\Gamma_0}^{1, h}) }
	\quad \text{and} \quad 
	\mathfrak{H}^{2, h}_{\Gamma_0} := \frac{ \ker(\div : V_{\Gamma_0}^{2, h} \to V_{\Gamma_0}^{3, h}) }{ 
		\image(\curl : V_{\Gamma_0}^{1, h} \to V_{\Gamma_0}^{2, h}) }.
\end{align*}
Characterizing the cohomology is thus crucial for constructing robust 
preconditioners for \cref{eq:riesz-maps}. The cohomology 
also plays a key role in finite element exterior calculus (FEEC),
including the well-posedness of the Hodge-Laplace problems associated with 
\cref{eq:stokes-complex-bcs-intro-discrete} 
(see e.g. \cite[Chapter 5.2]{Arnold18}) which includes incompressible flow
as well as other mixed problems involving the spaces and operators in 
\cref{eq:stokes-complex-bcs-intro-discrete}. More generally, 
the properties of ``simpler" complexes like
\cref{eq:stokes-complex-bcs-intro-discrete,eq:stokes-complex-bcs}, 
including cohomology, are critical in understanding properties of 
more complicated sequences via the Bernstein--Gelfand--Gelfand construction
\cite{ArnoldHu21,CapHu23,CapHu24}. For a recent review on the importance 
of cohomology in a variety of applications, we refer to \cite{Hu26}.
Nevertheless, the cohomology of any conforming finite element subcomplex
of \cref{eq:stokes-complex-bcs} on nontrivial domains and/or with mixed 
boundary conditions seems to not have been addressed in the literature.

Our first main result shows that if the finite element spaces 
in \cref{eq:stokes-complex-bcs-intro-discrete} are chosen to be the 
Alfeld-split macroelements in 
\cite{FuGuzmanNeilan20}, 
then the cohomology of the discrete complex
\cref{eq:stokes-complex-bcs-intro-discrete} is isomorphic to the cohomology
of the continuous complex \cref{eq:stokes-complex-bcs}. This result extends
\cite[Theorem 5.1]{FuGuzmanNeilan20} to the case of nontrivial domains 
and to the case of mixed boundary conditions. The main technique is 
applying the recent framework of \cite{HuLiangLin25} and modifying the final 
steps to account for boundary conditions. The second set of results show that,
in a certain sense, the Alfeld-split macroelement Stokes complex analog of 
Whitney forms 
\cite{Bossavit88a,Bossavit88b,Whitney57}
and lowest-order complete polynomial complexes can be constructed 
to form ``minimal'' subcomplexes of \cref{eq:stokes-complex-bcs-intro-discrete} 
with the same cohomology structure. The final two spaces in these subcomplexes
consist of low-order $H^1(\Omega)^3$-conforming element from 
\cite[Section 4]{GuzmanNeilan18} and piecewise constants.
For the first two spaces in the subcomplexes, we obtain novel 
$H^2(\Omega)$-conforming and $H^1(\curl; \Omega)$-conforming finite elements
whose dimension is, in a certain sense, ``minimal''.

\section{Summary of main results and outline}
\label{sec:main-results}

We assume that the domain $\Omega \subset \mathbb{R}^3$ and boundary partition
$\partial \Omega = \bar{\Gamma}_0 \cup \bar{\Gamma}_N$ satisfy the assumptions 
in \cite{PaulySchomburg22}: $\Omega$ is a bounded strongly 
Lipschitz domain and $\Gamma_0$ and $\Gamma_1$ are strongly Lipschitz subsets,
so that $\Omega$ and $\Gamma_0$ form a strong Lipschitz pair 
in the sense of \cite[Definition 2.11]{BauerPaulySchomburg19}.
We further assume that $\partial \Omega$ and $\Gamma_0$ are both the
union of a finite number of polygons.

We rewrite the Stokes complex \cref{eq:stokes-complex-bcs} in standard
FEEC notation as follows:
\begin{equation}
	\label{eq:stokes-complex-bcs-feec}
	\begin{tikzcd}
		0 \arrow[r] 
		& V_{\Gamma_0}^0 \arrow[r, "\dee^0"] 
		& V_{\Gamma_0}^1 \arrow[r, "\dee^1"] 
		& V_{\Gamma_0}^2 \arrow[r, "\dee^2"]
		& V_{\Gamma_0}^3 \arrow[r]
		& 0,
	\end{tikzcd}
\end{equation}
where 
\begin{alignat*}{4}
	V_{\Gamma_0}^0 &:= H^2_{\Gamma_0}(\Omega), \qquad & 
	V_{\Gamma_0}^1 &:= H^1_{\Gamma_0}(\curl; \Omega), \qquad & 
	V_{\Gamma_0}^2 &:= H^1_{\Gamma_0}(\Omega)^3, & \qquad 
	V_{\Gamma_0}^3 &:= L^2(\Omega), \\
	\dee^0 &:= \grad, \qquad & 
	\dee^1 &:= \curl, \qquad & 
	\dee^2 &:= \div, \qquad & 
	\dee^3 &:= 0.
\end{alignat*}
As with the Sobolev spaces, we may 
drop the subscript ``$D$'' if $|\Gamma_0| = 0$ or use the 
subscript ``0'' if $|\Gamma_1| = 0$. The Stokes complex 
\cref{eq:stokes-complex-bcs-feec} may be viewed as 
a smoother subcomplex of the standard de Rham complex:
\begin{equation}
	\label{eq:de-rham-complex-bcs-feec}
	\begin{tikzcd}
		0 \arrow[r] 
		& W_{\Gamma_0}^0 \arrow[r, "\dee^0"] 
		& W_{\Gamma_0}^1 \arrow[r, "\dee^1"] 
		& W_{\Gamma_0}^2 \arrow[r, "\dee^2"]
		& W_{\Gamma_0}^3 \arrow[r]
		& 0,
	\end{tikzcd}
\end{equation}
where 
\begin{alignat*}{4}
	W_{\Gamma_0}^0 &:= H^1_{\Gamma_0}(\Omega), \quad 
	& W_{\Gamma_0}^1 &:= H_{\Gamma_0}(\curl; \Omega), \quad 
	& W_{\Gamma_0}^2 &:= H_{\Gamma_0}(\div; \Omega), \quad
	& W_{\Gamma_0}^3 &:= L^2(\Omega).
\end{alignat*}
Here, $H_{\Gamma_0}(\curl; \Omega)$, respectively $H_{\Gamma_0}(\div; \Omega)$, is the space of 
$L^2(\Omega)^3$ vector fields whose $\curl$, respectively $\div$, is also 
square integrable and whose tangential, respectively normal, trace vanishes 
on $\Gamma_0$.

The $k$th-cohomology, or harmonic form, of the Stokes complex is 
defined by
\begin{align}
	\label{eq:harmonic-forms-bcs}
	\harmonic{H}_{\Gamma_0}^k 
	:= \frac{ \ker(\dee^k : V_{\Gamma_0}^k \to V_{\Gamma_0}^{k+1}) }{ 
		\image(\dee^{k-1} : V_{\Gamma_0}^{k-1} \to V_{\Gamma_0}^k) },
	\qquad k \in 0:3,
\end{align}
where we adopt the convention that any space or operator with form index 
$k < 0$ or $k > 3$ is the trivial space or operator, 
and $m:n := \{ m, m+1,\dots, n-1, n\}$
for nonnegative integers $m, n$. Theorem 5.12\footnote{
	In \cite{PaulySchomburg22}, $H^k_{\Gamma_0}(\Omega)$ is defined to be the 
	completion of smooth functions whose support is disjoint from
	$\Gamma_0$, which is equivalent to \cref{eq:hkd-first-def} thanks to
	\cite[Theorem 8.7 (iv)]{Brewster14} on noting that $\bar{\Gamma}_0$
	is a closed 2-Ahlfors regular set.
} of \cite{PaulySchomburg22}
shows that the harmonic forms of the Stokes complex and the de Rham 
complex are finite and isomorphic:
\begin{align}
	\label{eq:harmonic-forms-same-dim-diff-smoothness}
	\dim \harmonic{H}_{\Gamma_0}^k = 
	\dim \frac{ \ker(\dee^k : W_{\Gamma_0}^k \to W_{\Gamma_0}^{k+1}) }{ 
		\image(\dee^{k-1} : W_{\Gamma_0}^{k-1} \to W_{\Gamma_0}^k) } 
	\qquad \forall k \in 0:3.
\end{align}

The dimensions of the harmonic forms correspond to relative Betti numbers.
For $k \in 0:3$, let $b_k(\Omega, \Gamma_0)$ be the $k$-th relative Betti 
number defined as the dimension of the $k$-th singular homology group of 
$\Omega$ relative to $\Gamma_0$
(see e.g. \cite[pp. 108 \& 115]{Hatcher02}). Applying 
\cite[Theorem 5.3 and eq. (5.28)]{GoldshteinMitreaMitrea11}
to \cref{eq:harmonic-forms-same-dim-diff-smoothness} 
then shows that 
\begin{align}
	\label{eq:harmonic-forms-dim}
	\dim \harmonic{H}_{\Gamma_0}^k = b_k(\Omega, \Gamma_0) \qquad \forall k \in 0:3.
\end{align}
Note that $b_k(\Omega, \emptyset)$ corresponds to the usual Betti number 
of $\Omega$; e.g. $b_0(\Omega)$ is the number of connected components 
of $\Omega$. We also have the duality relation 
$b_{k}(\Omega, \Gamma_0) = b_{3-k}(\Omega, \Gamma_1)$ for $k \in 0:3$
thanks to \cite[Corollary 5.4]{GoldshteinMitreaMitrea11}. Combining this 
with \cite[eq. (5.10)]{GoldshteinMitreaMitrea11}, we have 
\begin{align*}
	b_0(\Omega, \Gamma_0) = \begin{cases}
		1 & \text{if } |\Gamma_0| = 0, \\
		0 & \text{otherwise},
	\end{cases}
	\quad \text{and} \quad 
	b_3(\Omega, \Gamma_0) = \begin{cases}
		1 & \text{if } |\Gamma_1| = 0, \\
		0 & \text{otherwise}.
	\end{cases}
\end{align*}
Unfortunately, $b_{1}(\Omega, \Gamma_0)$ and $b_2(\Omega, \Gamma_0)$ do not 
have such simple expressions.

\subsection{Cohomology of full polynomials spaces on Alfeld splits}

We first recall a finite element Stokes complex on Alfeld-split meshes
from \cite{FuGuzmanNeilan20}.
For a collection of tetrahedra $\mathcal{S}$ forming a conforming mesh 
of an open domain $\mathcal{O}$, we define the following spaces of polynomials 
for $p \in \mathbb{N}_0$:
\begin{align*}
	\DG^p(\mathcal{S}) &:= \{ v \in L^2(\mathcal{O}) : 
	v|_{K} \in \mathcal{P}_p(K) \ \forall K \in \mathcal{S} \}
	\ \text{and} \   
	\CG^p(\mathcal{S}) := \DG^p(\mathcal{S}) \cap C(\bar{\mathcal{O}}).
\end{align*}
Given a tetrahedron $K$, let $K_A$ denote 
the collection of four tetrahedra in the Alfeld split of $K$ formed by 
connecting the four vertices of $K$ to the barycenter of $K$,
and for $p \geq 3$, define the following local Alfeld-split macroelement spaces:
\begin{subequations}
	\label{eq:local-full-alfeld}
	\begin{alignat}{2}
		V^{0, h}(K_A) &:= \{v \in \CG^{p+2}(K_A) 
		: \grad v \in \CG^{p+1}(K_A)^3\},\\
		V^{1, h}(K_A) &:= \{v \in \CG^{p+1}(K_A)^3 
		: \curl v \in \CG^{p}(K_A)^3\},\\
		V^{2, h}(K_A) &:= \CG^p(K_A)^3, \\
		V^{3, h}(K_A) &:= \DG^{p-1}(K_A).
	\end{alignat}	
\end{subequations}
These local spaces form a complex 
\begin{equation}
	\label{eq:stokes-complex-local-full-alfeld}
	\begin{tikzcd}
		\mathbb{R} \arrow[r, "\subset"] 
		& V^{0, h}(K_A) \arrow[r, "\dee^0"] 
		& V^{1, h}(K_A) \arrow[r, "\dee^1"] 
		& V^{2, h}(K_A) \arrow[r, "\dee^2"]
		& V^{3, h}(K_A) \arrow[r]
		& 0,
	\end{tikzcd}
\end{equation}
and the complex is exact for all 
$p \geq 3$ \cite[Theorem 4.7]{FuGuzmanNeilan20}, which means 
that the range of each operator is the kernel of the succeeding operator; 
i.e., all harmonic forms are trivial.

The local spaces \cref{eq:local-full-alfeld} may be extended to a multi-element
mesh in a typical finite element fashion provided that extra vertex smoothness 
is imposed on first two spaces. In particular, let $\mesh$ be a 
conforming, shape regular simplicial mesh of $\Omega$ such that 
there exists a submesh of triangles $\boundarymesh$ that forms a conforming 
mesh of $\Gamma_0$. Additionally, we define the global ambient spaces by
\begin{align}
	\label{eq:global-ambient-spaces}
	\mathbb{X}^0 := \mathbb{X}^3 := \mathbb{R} 
	\quad \text{and} \quad \mathbb{X}^1 := \mathbb{X}^2 := \mathbb{R}^3.
\end{align}
\begin{subequations}
	\label{eq:global-full-alfeld}
	Then, for $k \in 0:1$, the global spaces are given by 
	\begin{multline}
		V^{k, h} := \{ v \in C(\Omega) \otimes \mathbb{X}^k : 
		\dee^k v \in C(\Omega) \otimes \mathbb{X}^{k+1}, \
		\text{$v$ is $C^{2-k}$ at $\Delta_0(\mesh)$} \\
		\text{ and } v|_{K} \in V^{k, h}(K_A) \ \forall K \in \mesh\},
	\end{multline}
	where $\Delta_{\ell}(\mathcal{S})$ denotes the set of all
	$\ell$-dimensional subsimplices of a 
	collection of $d$-dimensional simplices $\mathcal{S}$ with $\ell \in 0:d$ 
	(e.g. $\mesh = \Delta_3(\mesh)$). 
	For $k \in 2:3$, the global spaces are simply
	\begin{align}
		V^{k, h} := \{ v \in C^{2-k}(\Omega) \otimes \mathbb{X}^{k} : 
		v|_{K} \in V^{k, h}(K_A) \ \forall K \in \mesh \}, 
	\end{align}
	where $C^{-1}(\Omega) := L^2(\Omega)$.
\end{subequations}
The corresponding spaces incorporating the boundary conditions are 
\begin{align}
	\label{eq:global-full-alfeld-bcs}
	V^{k, h}_{\Gamma_0} := V^{k, h} \cap V_{\Gamma_0}^k,
\end{align}
which form a conforming subcomplex of \cref{eq:stokes-complex-bcs-feec}:
\begin{equation}
	\label{eq:stokes-complex-full-alfeld-bcs}
	\begin{tikzcd}
		0 \arrow[r] 
		& V^{0, h}_{\Gamma_0} \arrow[r, "\dee^0"] 
		& V^{1, h}_{\Gamma_0} \arrow[r, "\dee^1"] 
		& V^{2, h}_{\Gamma_0} \arrow[r, "\dee^2"]
		& V^{3, h}_{\Gamma_0} \arrow[r]
		& 0.
	\end{tikzcd}
\end{equation}

Our first main result shows that the discrete harmonic forms 
\begin{align}
	\label{eq:harmonic-forms-full-alfeld-bcs}
	\harmonic{H}^{k, h}_{\Gamma_0} := \frac{\ker(\dee^k : V_{\Gamma_0}^{k, h} \to V_{\Gamma_0}^{k+1, h})}{ 
		\image(\dee^{k-1} : V_{\Gamma_0}^{k-1, h} \to V_{\Gamma_0}^{k, h})}, \qquad 
	k \in 0:3,
\end{align}
have the same dimension as the 
continuous harmonic forms in \cref{eq:harmonic-forms-bcs}.

\begin{theorem}
	\label{thm:cohomology-full-alfeld-bcs}
	For all $p \geq 3$, the complexes \cref{eq:stokes-complex-bcs-feec}
	and \cref{eq:stokes-complex-full-alfeld-bcs}
	have isomorphic cohomologies: $\dim \harmonic{H}^{k, h}_{\Gamma_0} 
	= \dim \harmonic{H}^{k}_{\Gamma_0} = b_k(\Omega, \Gamma_0)$ for all $k \in 0:3$. 
\end{theorem}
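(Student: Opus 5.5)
The plan is to compare the discrete complex \cref{eq:stokes-complex-full-alfeld-bcs} with a combinatorial (simplicial/cellular) cochain complex that computes relative cohomology of $(\Omega,\Gamma_0)$. We follow the framework of \cite{HuLiangLin25}. The idea there is to decompose each finite element space by its degrees of freedom attached to subsimplices of $\mesh$: vertices, edges, faces and cells of the unsplit mesh. Local exactness of the Alfeld macroelement complex \cref{eq:stokes-complex-local-full-alfeld} \cite[Theorem 4.7]{FuGuzmanNeilan20}, together with exactness of the corresponding trace complexes on faces and edges, then shows the discrete complex is quasi-isomorphic to a complex built from the vertex data only.

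Concretely, the steps are as follows.

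\emph{Step 1: geometric decomposition.} We write each $V^{k,h}$ as a direct sum of spaces associated with each $\sigma\in\Delta_\ell(\mesh)$. The $C^{2-k}$ vertex conditions give vertex DOFs: function value plus gradient plus Hessian for $k=0$, value plus first derivatives for $k=1$, and values for $k=2$. These are followed by edge, face and interior bubble components.

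\emph{Step 2: bubble complexes.} We verify that for each edge, face and cell, the associated bubble complexes (the restriction of $\dee$ to the bubble parts, modulo lower-dimensional data) are exact, except in the degree prescribed by the dimension. This uses exactness of the local Alfeld complexes with vanishing traces, which is also contained in \cite{FuGuzmanNeilan20}. The counting argument of \cite{HuLiangLin25} then identifies the cohomology of the global complex with that of a complex of the form $\bigoplus_{\sigma\in\Delta_\ell(\mesh)}\mathbb{R}$, equipped with the simplicial coboundary. That complex is the simplicial cochain complex of $\mesh$.

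\emph{Step 3: boundary conditions.} The spaces $V^{k,h}_{\Gamma_0}$ kill exactly the DOFs attached to subsimplices in $\boundarymesh$, and all their derivative data there. We check that, after the filtration argument, one obtains the relative simplicial cochain complex $C^\bullet(\mesh,\boundarymesh)$. The relative part of the vertex DOFs has to be handled carefully: for instance, at a boundary vertex of $\Gamma_0$, tangential derivatives vanish but normal derivatives need not. The key check is that the surviving normal-derivative data at vertices, edges and faces of $\boundarymesh$ still form exact local complexes, so they contribute nothing to cohomology. I expect this to be the main obstacle, since it is exactly where the original framework must be modified. The sub-step is to show exactness of "normal-trace" complexes on the faces of $\boundarymesh$, mirroring the face bubble analysis.

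\emph{Step 4: conclusion.} Simplicial relative cohomology equals singular relative cohomology, so its dimension is $b_k(\Omega,\Gamma_0)$. Combined with \cref{eq:harmonic-forms-dim}, this yields $\dim\harmonic{H}^{k,h}_{\Gamma_0}=\dim\harmonic{H}^k_{\Gamma_0}=b_k(\Omega,\Gamma_0)$.

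As a consistency check, the Euler characteristic $\sum_k(-1)^k\dim V^{k,h}_{\Gamma_0}$ should equal $\sum_k(-1)^k\#(\Delta_k(\mesh)\setminus\Delta_k(\boundarymesh))$, which is verified using the local dimension counts. Only $k=0$ and $k=3$ need separate direct arguments. For $k=0$, constants are the kernel of $\grad$ and are excluded when $|\Gamma_0|>0$. For $k=3$, surjectivity of $\div$ onto $\DG^{p-1}$ (mean-zero when $|\Gamma_1|=0$) follows from the inf-sup stability in \cite{FuGuzmanNeilan20}. The middle degrees then follow from the Euler characteristic combined with one inequality from Step 2.
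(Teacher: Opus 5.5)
Your overall route matches the paper's: a trace structure on the Alfeld spaces, the geometric decomposition and exact modified bubble complexes from \cite{HuLiangLin25}, a skeletal complex identified through the currents $\mathcal{I}^k_\tau$ with the relative simplicial cochain complex, and simplicial $=$ singular relative homology. The gap is in Step 3, which is the one genuinely new ingredient, and you misdiagnose it.

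Your first claim is that $V^{k,h}_{\Gamma_0}$ kills all DOFs and derivative data attached to $\Delta(\boundarymesh)$. This is false for $k=0,1$. The zero-trace space $\{v\in V^{k,h}:\Tr^k_f v=0\ \forall f\in\boundarymesh\}$ is in general a strict subspace of $V^{k,h}_{\Gamma_0}$, so a filtration argument run on it computes the cohomology of the wrong complex. Your corrected picture (tangential derivatives vanish, normal ones need not, with surviving normal data on vertices, edges and faces of $\boundarymesh$) is not right either:
\begin{itemize}
\item Since $v,\grad v$ (resp.\ $w,\curl w$) vanish on all of $\bar\Gamma_0$, every edge and face bubble component on $\boundarymesh$ is zero. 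So are $v(z)$, $\grad v(z)$ and $w(z)$ at every boundary vertex.
\item At a vertex where two non-coplanar faces of $\boundarymesh$ meet, the two tangential constraints kill all of $\hess v(z)$ and $\grad w(z)$.
\item What survives is a single scalar, and only at flat vertices $z\in\Delta_0^\flat(\boundarymesh)$ (\cref{lem:vertex-trace-bcs}): $\partial^2_{\unitvec{n}_\Gamma}v(z)$ for $k=0$ and $\partial_{\unitvec{n}_\Gamma}(w\cdot\unitvec{n}_\Gamma)(z)$ for $k=1$.
\end{itemize}
So your proposed sub-step, exactness of normal-trace complexes on faces of $\boundarymesh$, targets objects that are zero. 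The flat/non-flat vertex distinction, which is the actual point, is missing.

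What is needed instead is the following.
\begin{itemize}
\item[(a)] The vertex-trace computation above. It shows that every $v\in V^{k,h}_{\Gamma_0}$ splits into $v_S\in\mathcal{S}^k_{\Gamma_0}$ and $v_\tau\in\mathbb{B}^k_{\Gamma_0}(\tau)$.
\item[(b)] The converse inclusions. Skeletal functions with vanishing currents on $\Delta_k(\boundarymesh)$ satisfy the boundary conditions (\cref{lem:support-skeleton-extended-bubble}(ii)). Lifted vertex bubbles carrying only the $\unitvec{n}_\Gamma\otimes\unitvec{n}_\Gamma$ datum at a flat vertex lie in $V^{k,h}_{\Gamma_0}$. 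The latter uses flatness ($\unitvec{n}_f=\unitvec{n}_\Gamma$ for all $f\ni z$) together with unisolvence of the face DOFs on $\mathcal{P}_{p+2}(f)$, $\mathcal{P}_{p+1}(f)$, etc.
\item[(c)] Exactness of the extra vertex row $0\to\mathbb{B}^{0,nn}(z)\to\mathbb{B}^{1,nn}(z)\to0$. This is trivial, since $\grad$ maps one one-dimensional space onto the other.
\end{itemize}
With these in place, the rest of your argument goes through as in the paper.

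Separately, the fallback in your last paragraph does not close. The Euler characteristic together with degrees $0$ and $3$ gives only
$\dim\harmonic{H}^{1,h}_{\Gamma_0}-\dim\harmonic{H}^{2,h}_{\Gamma_0}=b_1(\Omega,\Gamma_0)-b_2(\Omega,\Gamma_0)$.
A single inequality therefore cannot fix both middle degrees; you need equality in one of them.
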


\noindent The proof of \cref{thm:cohomology-full-alfeld-bcs},
appearing below in \cref{sec:proof-cohomology-full-spaces-bcs}, applies the 
general framework of \cite{HuLiangLin25} and extends it to the case of 
mixed boundary conditions 
for the particular discrete complex
\cref{eq:stokes-complex-full-alfeld-bcs}. We note 
that \cref{thm:cohomology-full-alfeld-bcs} is the extension of 
\cite[Theorem 5.1]{FuGuzmanNeilan20} to nontrivial domains with mixed 
boundary conditions. 

\subsection{``Minimal'' conforming complexes}
\label{sec:minimal-complexes-results}

We now seek a ``minimal'' conforming finite element 
subcomplex of the Stokes complex \cref{eq:stokes-complex-bcs-feec}
\begin{equation}
	\label{eq:stokes-complex-reduced-alfeld-bcs}
	\begin{tikzcd}
		0 \arrow[r] 
		& \tilde{V}^{0, h}_{\Gamma_0} \arrow[r, "\dee^0"] 
		& \tilde{V}^{1, h}_{\Gamma_0} \arrow[r, "\dee^1"] 
		& \tilde{V}^{2, h}_{\Gamma_0} \arrow[r, "\dee^2"]
		& \tilde{V}^{3, h}_{\Gamma_0} \arrow[r]
		& 0
	\end{tikzcd}
\end{equation}
whose cohomology is isomorphic to that of \cref{eq:stokes-complex-bcs-feec}.
We restrict ourselves to subcomplexes of 
\cref{eq:stokes-complex-reduced-alfeld-bcs} to stay within the setting 
of Alfeld-split meshes; see \cite[Proposition 27]{ChristiansenHu18} for 
spaces on other types of splits and \cite[p. 343]{ChristiansenHu18} for 
minimal two-dimensional elements.
One of the key properties of the full polynomial spaces 
\cref{eq:global-full-alfeld} that is crucial in the proof of 
\cref{thm:cohomology-full-alfeld-bcs} is that one may choose
degrees of freedom for the spaces $V^{k, h}$ 
to include the following linear functionals:
\begin{subequations}
	\label{eq:whitney-dofs-currents}
	\begin{alignat}{2}
		C(\bar{\Omega}) \otimes \mathbb{X}^0 \ni v 
		&\mapsto \mathcal{I}_{z}^0(v) := v(z) \qquad 
		& &\forall z \in \Delta_0(\mesh), \\
		C(\bar{\Omega}) \otimes \mathbb{X}^1 \ni v 
		&\mapsto \mathcal{I}_{e}^1(v) := 
		\int_{e} v \cdot \unitvec{t}_e \d{s} \qquad 
		& &\forall e \in \Delta_1(\mesh), \\
		C(\bar{\Omega}) \otimes \mathbb{X}^2 \ni v 
		&\mapsto \mathcal{I}_{f}^2(v) := 
		\int_{f} v \cdot \unitvec{n}_f \d{s} \qquad 
		& &\forall f \in \Delta_2(\mesh), \\
		C(\bar{\Omega}) \otimes \mathbb{X}^3 \ni v 
		&\mapsto \mathcal{I}_{K}^3(v) := \int_{K} v \d{x} \qquad
		& &\forall K \in \Delta_3(\mesh),
	\end{alignat}
\end{subequations}
where $\unitvec{n}_f$ and $\unitvec{t}_e$ are unit normal and tangent vectors 
with a fixed global orientation. Note that \cref{eq:whitney-dofs-currents} 
are simply
the canonical set of degrees of freedom for the Whitney forms 
\cite{Bossavit88a,Bossavit88b,Whitney57},
the lowest-order conforming discretization of the de Rham complex 
\cref{eq:de-rham-complex-bcs-feec}.

Our starting point is then to choose 
\begin{align}
	\label{eq:min-l2-space}
	\tilde{V}^{3, h}(K_A) := \DG^0(K), \quad  
	\tilde{V}^{3, h} := \DG^0(\mesh), \quad \text{and} \quad 
	\tilde{V}^{3, h}_{\Gamma_0} := \tilde{V}^{3, h} \cap V^3_{\Gamma_0},
\end{align}
the Whitney forms of index 3.
Moving one space to the left in the complex 
\cref{eq:stokes-complex-reduced-alfeld-bcs},
we seek an $H^1(\Omega)^3$-conforming finite element space whose divergence 
lies in $\DG^0(\mesh)$ and is large enough to be equipped 
with the degrees of freedom in \cref{eq:whitney-dofs-currents}.
The Guzm\'{a}n-Neilan element \cite[Section 4]{GuzmanNeilan18} exactly 
meets these requirements and is defined as follows. For $f \in \Delta_2(K)$, 
let $b_f \in \mathcal{P}_3(K)$ be the face bubble function satisfying 
$b_f|_{\partial K \setminus f} = 0$, normalized so that 
$\int_f b_f \d{s} = |f|$. Henceforth, $b_f$ is also used to denote the 
restriction $b_f|_f$. Moreover, for $K \in \mesh$, let 
$S_{K} : \CG^{3}(K_A)^3 \to \CG^{3}(K_A)^3$ 
be any fixed linear operator satisfying the following
for all $v \in \CG^{3}(K_A)^3$:
\begin{align}
	\label{eq:generic-gn-extension-operator}
	S_{K} v|_{\partial K} 
	= v|_{\partial K}, 
	\quad 
	\div S_{K} v \in \mathcal{P}_0(K),  
	\ \ \text{and} \ \
	| S_K v |_{H^{\ell}(K)} 
	\leq C_{S} |v |_{H^{\ell}(K)}, \ \ell \in 0:1,
\end{align}
where $C_{S} > 0$ is independent of $K$. Then, the local and global 
Guzm\'{a}n-Neilan spaces are give by
\begin{align}
	\label{eq:gn-local}
	\GN(K_A) &:= \mathcal{P}_1(K)^3 
	\oplus 
	\spann\{ S_{K}( b_f \unitvec{n}_f) : f \in \Delta_2(K)  \}
	\qquad \forall K \in \mesh, \\
	\label{gn:global}
	\GN(\mesh) &:= \{v \in C(\Omega)^3 : 
	v|_{K} \in \GN(K_A) \ \forall K \in \mesh  \}.
\end{align}
In particular, $\CG^1(\mesh)^3 \subset \GN(\mesh) \subset V^{1, h}$,
$\div \GN(\mesh) \subseteq \DG^0(\mesh)$, and a global set of 
degrees of freedom are given by \cite[Lemma 4.3]{GuzmanNeilan18}: 
\begin{align}
	\label{eq:gn-dofs}
	v(z) \qquad \forall z \in \Delta_0(\mesh) 
	\quad \text{and} \quad
	\int_f v \cdot n_f \d{s} \qquad \forall f \in \Delta_2(\mesh).
\end{align}

\begin{remark}
	The definition of $\GN(K_A)$ in \cite[Section 4]{GuzmanNeilan18} used a 
	particular choice of $S_K$; however, the only properties used in the 
	analysis are \cref{eq:generic-gn-extension-operator}. Thus, we
	shall use results from \cite{GuzmanNeilan18} for the more generic space 
	here.
\end{remark}

The choice 
\begin{align}
	\label{eq:min-h1-space}
	\tilde{V}^{2, h}(K_A) := \GN(K_A), \quad  
	\tilde{V}^{2, h} := \GN(\mesh), \quad \text{and} \quad 
	\tilde{V}^{2, h}_{\Gamma_0} := \tilde{V}^{2, h} \cap V^2_{\Gamma_0}
\end{align}
is then ``minimal", 
as one typically requires vertex degrees of freedom as in \cref{eq:gn-dofs} 
to ensure continuity. The construction of the remaining spaces 
$\tilde{V}_{\Gamma_0}^{0, h}$ and $\tilde{V}_{\Gamma_0}^{1, h}$ will be detailed in 
\cref{sec:gn-complex-construction} below. The main result is the following.

\begin{theorem}
	\label{thm:cohomology-reduced-alfeld-bcs}
	Let $\tilde{V}^{k, h}(K_A)$ and $\tilde{V}^{k, h}_{\Gamma_0}$ for $k \in 2:3$
	be given by \cref{eq:min-l2-space,eq:min-h1-space}.
	For each $K \in \mesh$ and $k \in 0:1$,
	there exists $\tilde{V}^{k, h}(K_A) \subset V^{k, h}(K_A)$,
	such that
	$\mathcal{P}_{3-k}(K) \otimes \mathbb{X}^k \subset \tilde{V}^{k, h}(K_A)$ 
	and the local complex 
	\begin{equation}
		\label{eq:stokes-complex-reduced-alfeld-element}
		\begin{tikzcd}
			\mathbb{R} \arrow[r, "\subset"] 
			& \tilde{V}^{0, h}(K_A) \arrow[r, "\dee^0"] 
			& \tilde{V}^{1, h}(K_A) \arrow[r, "\dee^1"] 
			& \tilde{V}^{2, h}(K_A) \arrow[r, "\dee^2"]
			& \tilde{V}^{3, h}(K_A) \arrow[r]
			& 0
		\end{tikzcd}
	\end{equation}
	is exact. Moreover, if we define for $k \in 0:1$
	\begin{align}
		\label{eq:min-h2h1curl-space}
		\tilde{V}^{k, h} := \{ v \in V^{k, h} : 
		v|_{K} \in \tilde{V}^{k, h}(K_A) 
		\ \forall K \in \mesh \} 
		\quad \text{and} \quad 
		\tilde{V}^{k, h}_{\Gamma_0} := \tilde{V}^{k, h} \cap V^k_{\Gamma_0},
	\end{align}
	then the global complexes \cref{eq:stokes-complex-bcs-feec} and 
	\cref{eq:stokes-complex-reduced-alfeld-bcs}
	have isomorphic cohomologies.
\end{theorem}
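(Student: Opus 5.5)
The plan has two parts: build the local spaces so that \cref{eq:stokes-complex-reduced-alfeld-element} is exact, then transfer the global cohomology from \cref{thm:cohomology-full-alfeld-bcs} by a dimension-counting and degree-of-freedom argument.

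\textbf{Local construction.} Work from right to left, using exactness of the full local complex \cref{eq:stokes-complex-local-full-alfeld} for $p=3$. Take $\GN(K_A)\subset V^{2,h}(K_A)$. Its divergence hits all of $\DG^0(K)$, since $\div \mathcal{P}_1(K)^3 = \mathcal{P}_0(K)$. Its kernel $Z^2 := \ker(\div|_{\GN(K_A)})$ lies in $\curl V^{1,h}(K_A)$ by local exactness. Define
\[
\tilde V^{1,h}(K_A) := \mathcal{P}_2(K)^3 + \mathcal{L}^1(Z^2),
\]
where $\mathcal{L}^1$ is a fixed right inverse of $\curl$ from $Z^2$ into $V^{1,h}(K_A)$. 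The dimension count gives $\dim Z^2 = \dim\GN(K_A)-1 = 15$. I would choose $\mathcal{L}^1$ through the degrees of freedom of $V^{1,h}(K_A)$, so that the lifted fields have the vertex smoothness and the edge moments needed to match the Whitney functionals \cref{eq:whitney-dofs-currents}. Note that $\curl\mathcal{P}_2(K)^3\subset \mathcal{P}_1(K)^3\subset\GN(K_A)$, so curl maps $\tilde V^{1,h}(K_A)$ into $\GN(K_A)$ and onto $Z^2$.

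Next, let $Z^1 := \ker(\curl|_{\tilde V^{1,h}(K_A)})$. By local exactness $Z^1 = \grad W$ for some $W\subset V^{0,h}(K_A)$. Set
\[
\tilde V^{0,h}(K_A) := \mathcal{P}_3(K) + \{\text{preimages of } Z^1 \text{ under } \grad\}.
\]
This contains $\mathcal{P}_3(K)$, and $\grad \tilde V^{0,h}(K_A) = Z^1$ holds by construction. The kernel of $\grad$ is $\mathbb{R}$. So the local complex is exact and the inclusions $\mathcal{P}_{3-k}(K)\otimes\mathbb{X}^k \subset \tilde V^{k,h}(K_A)$ hold. The substantive requirement is that the spaces be \emph{unisolvent with traces depending only on subsimplex data}. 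Each space must admit degrees of freedom that include vertex $C^{2-k}$ values and the Whitney functionals \cref{eq:whitney-dofs-currents}, with face and edge traces determined by DOFs on that face or edge. I would obtain this by taking $\tilde V^{k,h}(K_A)$ to be $\mathcal{P}_{3-k}(K)\otimes\mathbb X^k$ enriched by local bubble-type fields indexed by faces and edges, analogous to the $S_K(b_f\unitvec n_f)$ of $\GN$. I would verify the count by the local Euler characteristic:
\[
\sum_k (-1)^k\dim\tilde V^{k,h}(K_A) = 1.
\]

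\textbf{Global cohomology.} With conforming DOFs, the global spaces \cref{eq:min-h2h1curl-space} are well defined and form a subcomplex of \cref{eq:stokes-complex-full-alfeld-bcs}. I would then follow the framework of \cite{HuLiangLin25}, as in the proof of \cref{thm:cohomology-full-alfeld-bcs}. Because the local complexes are exact and the DOFs are organized by subsimplices, the global complex is quasi-isomorphic to a simplicial cochain complex on $\mesh$ relative to $\boundarymesh$. The Whitney functionals \cref{eq:whitney-dofs-currents} realise this cochain map; extra vertex DOFs form acyclic pieces. The cohomology is therefore that of relative simplicial cochains, of dimension $b_k(\Omega,\Gamma_0)$, and \cref{eq:harmonic-forms-dim} finishes the argument.

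\textbf{Main obstacle.} The delicate step is the local enrichment of $\tilde V^{1,h}(K_A)$ and $\tilde V^{0,h}(K_A)$ with traces that are local. The lifted curl-preimages of the $\GN$ face bubbles must have tangential traces and vertex derivatives controlled by face and edge data alone. My first attempt is to lift each $S_K(b_f\unitvec n_f)$ minus a suitable $\mathcal{P}_1$ correction, choosing the correction to have zero flux so that it lies in $Z^2$. The lift would be placed in a face-supported subspace of $V^{1,h}(K_A)$ obtained from the Fu--Guzm\'an--Neilan DOFs. I would then check that boundary conditions on $\Gamma_0$ are respected by zeroing the DOFs on $\boundarymesh$.
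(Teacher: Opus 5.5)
\textbf{There is a genuine gap.} It sits exactly at the step you flag as the ``main obstacle'': you never construct the local spaces, and the construction you do write down cannot carry the global argument.

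With $\tilde V^{1,h}(K_A) := \mathcal{P}_2(K)^3 + \mathcal{L}^1(Z^2)$, any curl-free element $v = p + \mathcal{L}^1 z$ forces $z = -\curl p$. Hence $\ker\curl = \{p - \mathcal{L}^1\curl p : p \in \mathcal{P}_2(K)^3\}$ has dimension at most $30$. It follows that $\dim \tilde V^{1,h}(K_A) \le 45$ and $\dim\tilde V^{0,h}(K_A) \le 31$, whatever right inverse you pick.

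The global spaces \cref{eq:min-h2h1curl-space} live inside $V^{k,h}$, which imposes $C^{2-k}$ continuity at vertices. Your global step needs two things: the extra vertex DOFs must split off as acyclic pieces, and local $\curl$-preimages must glue across faces. Both require the $40$ functionals \cref{eq:walkington-dofs} and the $54$ functionals \cref{eq:h1curl-dofs} to be unisolvent locally, with face traces determined by face data. Your spaces are too small for this.

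The suggested fix, enriching by face/edge bubble fields, cannot repair $k=0$. If \cref{eq:walkington-dofs} is unisolvent, any function with vanishing vertex $2$-jets is zero. So the $20$ missing dimensions beyond $\mathcal{P}_3(K)$ must carry vertex second derivatives. Local exactness alone is cheap, since your construction has it by design. Everything the global part of the theorem rests on is left undone.

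\textbf{The missing idea.} The paper carves the spaces out of the Fu--Guzm\'an--Neilan spaces with $p=3$ by constraints, rather than building them up from $\mathcal{P}_{3-k}$. In \cref{eq:local-h2-space-def,eq:local-h1curl-space-def} it requires:
\begin{itemize}
\item the face normal derivative (resp.\ normal component) to lie in $\mathcal{P}_3(f)$ and be annihilated by $\ell_f$;
\item $\curl v \in \GN(K_A)$ when $k=1$;
\item $a_K$-orthogonality to $\grad V^{0,h}_0(K_A)$.
\end{itemize}
These constraints give three things:
\begin{itemize}
\item The complex property holds automatically.
\item The inclusion of $\mathcal{P}_{3-k}(K)\otimes\mathbb{X}^k$ follows from \cref{eq:kill-h2-interior-bilinear-kill-p2,eq:kill-cubic-bubble-dof}.
\item Unisolvence with dimensions $40$ and $54$ (\cref{lem:walkington-dofs-unisolvent,lem:h1curl-local-unisolvence}) follows from the FGN degrees of freedom.
\end{itemize}
Face gluing then uses \cref{lem:h2h1curl-edge-normal-degree-reduction} together with the Bell element. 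Once that is in place, your global sketch does match the paper. Its skeletal spaces are identified with $W^{k,h}_{\Gamma_0}$ through the Whitney currents, and Licht's result gives their cohomology. The vertex bubble rows are exact by a dimension count. Local exactness is then the special case of a single element.

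\textbf{Boundary conditions.} ``Zeroing the DOFs on $\boundarymesh$'' only shows one inclusion; it does not characterize $\tilde V^{k,h}\cap V^k_{\Gamma_0}$. At vertices in $\Delta_0^{\flat}(\boundarymesh)$, the quantities $\partial^2_{\unitvec{n}_\Gamma} v(z)$ and $\partial_{\unitvec{n}_\Gamma}(v\cdot\unitvec{n}_\Gamma)(z)$ survive (\cref{lem:vertex-trace-bcs}). You must keep them, as in \cref{lem:min-h2-space-dofs-bcs,lem:min-h1curl-space-dofs-bcs}, or you are computing the cohomology of a strictly smaller complex than the one in the statement. They form a one-dimensional pair mapped isomorphically by $\grad$, so including them costs nothing.
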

\noindent The proof of \cref{thm:cohomology-reduced-alfeld-bcs} appears in 
\cref{sec:proof-cohomology-reduced-alfeld-bcs} below. 
We will also show below in \cref{lem:walkington-dofs-unisolvent,%
	lem:h1curl-local-unisolvence} that 
$\tilde{V}^{0, h}$ may be equipped with the degrees of freedom 
\begin{align}
	\label{eq:walkington-dofs}
	D^{\alpha} v(z) \qquad \forall |\alpha| \leq 2, 
	\ \forall z \in \Delta_0(\mesh),
\end{align}
while $\tilde{V}^{1, h}$ may be equipped with
\begin{subequations}
	\label{eq:h1curl-dofs}
	\begin{alignat}{2}
		\label{eq:h1curl-dofs-vertices}
		&D^{\alpha} v(z) \qquad & &
		\forall|\alpha| \leq 1, \ \forall z \in \Delta_0(\mesh), \\
		\label{eq:h1curl-dofs-edge-tangents}
		&\int_{e} v \cdot \unitvec{t}_e \d{s} \qquad & &
		\forall e \in \Delta_1(\mesh).
	\end{alignat}
\end{subequations}
In view of the $C^{2-k}$-continuity imposed at the mesh vertices 
of elements in $V^{k, h}$, $k \in 0:1$, we see that 
$\tilde{V}^{k, h}$ are then ``minimal'' subspaces of $V^{k, h}$
whose degrees of freedom can be chosen to include 
\cref{eq:whitney-dofs-currents} and satisfy 
$\mathcal{P}_{3-k}(K) \otimes \mathbb{X}^k \subset \tilde{V}^{k, h}(K_A)$.

In view of this property, the local complex 
\cref{eq:stokes-complex-reduced-alfeld-element} bears 
resemblance to the complex of complete polynomials used to discretize 
the de Rham complex \cref{eq:de-rham-complex-bcs-feec}. 
Locally, the complete polynomial complex reads for $K \in \mesh$:
\begin{equation}
	\label{eq:de-rham-complex-complete-poly-element}
	\begin{tikzcd}
		\mathbb{R} \arrow[r, "\subset"] 
		& \mathcal{P}_{3}(K) \arrow[r, "\dee^0"] 
		& \mathcal{P}_{2}(K)^3 \arrow[r, "\dee^1"] 
		& \mathcal{P}_{1}(K)^3 \arrow[r, "\dee^2"]
		& \mathcal{P}_{0}(K) \arrow[r]
		& 0.
	\end{tikzcd}
\end{equation}
In fact, \cref{thm:cohomology-reduced-alfeld-bcs} shows that 
\cref{eq:de-rham-complex-complete-poly-element} is a subcomplex of 
\cref{eq:stokes-complex-reduced-alfeld-element}. The additional complexities 
of the spaces in \cref{eq:stokes-complex-reduced-alfeld-element} only
arise due to the additional global continuity imposed by being 
conforming subspaces of $V_{\Gamma_0}^{k}$ rather than $W_{\Gamma_0}^{k}$. Thus, the global
complex \cref{eq:stokes-complex-reduced-alfeld-bcs} may be seen as 
the Alfeld-split macroelement Stokes complex analog of the lowest-order 
discretization of the de Rham complex with complete polynomials.

Of course, one can further reduce the local complex 
\cref{eq:de-rham-complex-complete-poly-element} to 
the local Whitney complex or lowest-order trimmed polynomial complex:
\begin{equation}
	\label{eq:de-rham-complex-reduce-poly-element}
	\begin{tikzcd}
		\mathbb{R} \arrow[r, "\subset"] 
		& W^{0, h}(K) \arrow[r, "\dee^0"] 
		& W^{1, h}(K) \arrow[r, "\dee^1"] 
		& W^{2, h}(K) \arrow[r, "\dee^2"]
		& W^{3, h}(K) \arrow[r]
		& 0,
	\end{tikzcd}
\end{equation}
where
\begin{subequations}
	\label{eq:whitney-forms-local}
	\begin{alignat}{2}
		W^{0, h}(K) &:= \mathcal{P}_1(K), \qquad &
		W^{1, h}(K) &:= \mathcal{P}_{0}(K)^3 + x \times \mathcal{P}_0(K)^3, \\
		W^{2, h}(K) &:= \mathcal{P}_{0}(K)^3 + x \mathcal{P}_0(K), \qquad &
		W^{3, h}(K) &:= \mathcal{P}_0(K).
	\end{alignat}
\end{subequations}
The following result shows that the two spaces $\tilde{V}^{k, h}(K_A)$, 
$k \in 0:1$, can be reduced further while ensuring that 
\cref{eq:de-rham-complex-reduce-poly-element} is a subcomplex of the 
corresponding local complex.
\begin{theorem}
	\label{thm:cohomology-reduced-alfeld-reduced-bcs}
	Let $\tilde{V}^{k, h}(K_A)$ and $\tilde{V}_{\Gamma_0}^{k ,h}$ be defined as 
	in \cref{thm:cohomology-reduced-alfeld-bcs}. Then, the further reduced 
	spaces 
	\begin{subequations}
		\begin{align}
			\hat{V}^{0, h}(K_A) &:= \{ v \in \tilde{V}^{0, h}(K_A) : 
			\hess v(z) = 0 
			\ \forall z \in \Delta_0(K) \}, \\
			\hat{V}^{1, h}(K_A) &:= \{ v \in \tilde{V}^{1, h}(K_A) : 
			\sym\grad v(z) = 0 
			\ \forall z \in \Delta_0(K)  \},
		\end{align}
	\end{subequations}
	where $\hess$ denotes the Hessian operator,
	satisfy $W^{k, h}(K) \subset \hat{V}^{k, h}(K_A)$, $k \in 0:1$,
	and the complex 
	\begin{equation}
		\label{eq:stokes-complex-reduced-alfeld-reduced-element}
		\begin{tikzcd}
			\mathbb{R} \arrow[r, "\subset"] 
			& \hat{V}^{0, h}(K_A) \arrow[r, "\dee^0"] 
			& \hat{V}^{1, h}(K_A) \arrow[r, "\dee^1"] 
			& \tilde{V}^{2, h}(K_A) \arrow[r, "\dee^2"]
			& \tilde{V}^{3, h}(K_A) \arrow[r]
			& 0
		\end{tikzcd}
	\end{equation}
	is exact. Moreover, if we define for $k \in 0:1$
	\begin{align}
		\label{eq:min-h2h1curl-space-reduce}
		\hat{V}^{k, h} := \{ v \in \tilde{V}^{k, h} : 
		v|_{K} \in \hat{V}^{k, h}(K_A) 
		\ \forall K \in \mesh \} 
		\quad \text{and} \quad 
		\hat{V}^{k, h}_{\Gamma_0} := \hat{V}^{k, h} \cap V^k_{\Gamma_0},
	\end{align}
	then the cohomology of 
	\begin{equation}
		\label{eq:stokes-complex-reduced-alfeld-reduced}
		\begin{tikzcd}
			0 \arrow[r] 
			& \hat{V}_{\Gamma_0}^{0, h} \arrow[r, "\dee^0"] 
			& \hat{V}_{\Gamma_0}^{1, h} \arrow[r, "\dee^1"] 
			& \tilde{V}_{\Gamma_0}^{2, h} \arrow[r, "\dee^2"]
			& \tilde{V}_{\Gamma_0}^{3, h} \arrow[r]
			& 0
		\end{tikzcd}
	\end{equation}
	is isomorphic to the cohomology of \cref{eq:stokes-complex-bcs-feec}.
\end{theorem}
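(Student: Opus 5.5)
The proof has two parts. First I show the local containment and local exactness of \cref{eq:stokes-complex-reduced-alfeld-reduced-element}. Then I pass to the global cohomology by comparing with the already established \cref{thm:cohomology-reduced-alfeld-bcs}.

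\emph{Containments.} For $W^{0,h}(K)=\mathcal{P}_1(K)$ the Hessian vanishes identically, and $\mathcal{P}_1(K)\subset\mathcal{P}_3(K)\subset\tilde{V}^{0,h}(K_A)$ by \cref{thm:cohomology-reduced-alfeld-bcs}. For $v=a+b\times x\in W^{1,h}(K)$ we have $\grad v$ skew, so $\sym\grad v=0$, and $v\in\mathcal{P}_1(K)^3\subset\mathcal{P}_2(K)^3\subset\tilde{V}^{1,h}(K_A)$.

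\emph{Local exactness.} The complex property follows from two identities. First, $\grad\hat V^{0,h}(K_A)\subset\hat V^{1,h}(K_A)$, because $\sym\grad\grad v=\hess v$. Second, the last two spaces are unchanged from \cref{eq:stokes-complex-reduced-alfeld-element}. Exactness at $\hat V^{0,h}$ and at $\tilde V^{3,h}$ is inherited directly.

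At $\hat V^{1,h}$, suppose $u\in\hat V^{1,h}(K_A)$ with $\curl u=0$. Then $u=\grad\phi$ for some $\phi\in\tilde V^{0,h}(K_A)$, and $\hess\phi(z)=\sym\grad u(z)=0$, so $\phi\in\hat V^{0,h}(K_A)$.

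At $\tilde V^{2,h}$, suppose $w\in\tilde V^{2,h}(K_A)$ with $\div w=0$. Exactness of \cref{eq:stokes-complex-reduced-alfeld-element} gives $w=\curl u$ for some $u\in\tilde V^{1,h}(K_A)$. I correct $u$ by a gradient. Set $\sigma_z:=\sym\grad u(z)$ for each of the four vertices $z$ of $K$. I need $\phi\in\tilde V^{0,h}(K_A)$ with $\hess\phi(z)=\sigma_z$ at every vertex. Then $u-\grad\phi\in\hat V^{1,h}(K_A)$ and still has curl $w$.

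The existence of such $\phi$ follows from the unisolvence of the degrees of freedom \cref{eq:walkington-dofs} restricted to one element, i.e. \cref{lem:walkington-dofs-unisolvent}. That result lets the vertex Hessians be prescribed independently. This is the step that uses the specific construction of $\tilde V^{0,h}$, and I expect it to be the main (though mild) obstacle. Any reduced space defined by vertex constraints needs exactly this surjectivity of the vertex-jet map.

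\emph{Global cohomology.} I compare \cref{eq:stokes-complex-reduced-alfeld-reduced} with \cref{eq:stokes-complex-reduced-alfeld-bcs}, whose cohomology is already known by \cref{thm:cohomology-reduced-alfeld-bcs}. The quotient complex has the spaces $Q^k:=\tilde V^{k,h}_{\Gamma_0}/\hat V^{k,h}_{\Gamma_0}$ for $k\in 0:1$, and $Q^2=Q^3=0$.

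The only nonzero map in the quotient is $\grad:Q^0\to Q^1$. By the global $C^{2-k}$ vertex continuity, $Q^0$ is identified with the vertex Hessian values $\{\hess v(z)\}$, and $Q^1$ with the vertex values $\{\sym\grad v(z)\}$. These range over symmetric matrices at vertices not in $\bar\Gamma_0$, and are zero at vertices in $\bar\Gamma_0$. Under this identification $\grad$ acts as the identity, since $\sym\grad\grad v=\hess v$.

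To conclude I need two facts.

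\begin{itemize}
\item \emph{Surjectivity of the vertex maps.} Every admissible family of vertex Hessian values must be attained by some $v\in\tilde V^{0,h}_{\Gamma_0}$. This follows from the global unisolvence in \cref{lem:walkington-dofs-unisolvent}, together with the fact that the boundary conditions only kill the degrees of freedom at vertices in $\bar\Gamma_0$. The corresponding statement for $\tilde V^{1,h}_{\Gamma_0}$ follows from \cref{lem:h1curl-local-unisolvence}.
\item \emph{Consequence.} The quotient complex is then exact, since $\grad:Q^0\to Q^1$ is an isomorphism.
\end{itemize}

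The long exact sequence associated with the short exact sequence $0\to\hat V\to\tilde V\to Q\to0$ then shows that inclusion induces an isomorphism on cohomology. Combining this with \cref{thm:cohomology-reduced-alfeld-bcs} yields the claim.
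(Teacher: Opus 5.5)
Your proof is sound in structure and takes a different route from the paper, but one identification in the global step is wrong and needs repair.

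\textbf{Comparison of routes.} The paper omits its proof as ``completely analogous'' to that of \cref{thm:cohomology-reduced-alfeld-bcs}. That means re-running the decomposition \cref{eq:stokes-complex-gn-geom-decomp} for the hatted spaces: the skeletal spaces $\tilde{\mathcal{S}}^k_{\Gamma_0}$ are unchanged, since their vertex jets already vanish, and only the vertex bubbles shrink. One then checks the bubble row by counting, identifies the skeletal row with Whitney forms via the currents, and obtains local exactness from the single-element case with $\Gamma_0=\emptyset$. You instead view \cref{eq:stokes-complex-reduced-alfeld-reduced} as a subcomplex of \cref{eq:stokes-complex-reduced-alfeld-bcs}. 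The quotient is the two-term complex $Q^0 \xrightarrow{\grad} Q^1$ of vertex second-order jets, and you apply the long exact sequence with \cref{thm:cohomology-reduced-alfeld-bcs} as a black box. You also prove local exactness directly by a gradient correction, and that local argument is correct as written. Your route is shorter and shows slightly more, namely that the inclusion itself induces the isomorphism. The paper's route does not depend on \cref{thm:cohomology-reduced-alfeld-bcs}, and it yields explicit degrees of freedom for $\hat V^{k,h}_{\Gamma_0}$, in which the flat-boundary-vertex modes disappear entirely.

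\textbf{The flaw.} The quotients are \emph{not} zero at all vertices of $\bar\Gamma_0$. Take $z \in \Delta_0^{\flat}(\boundarymesh)$. By \cref{lem:vertex-trace-bcs}, $\hess v(z) = \partial^2_{\unitvec{n}_\Gamma} v(z)\,\unitvec{n}_\Gamma\otimes\unitvec{n}_\Gamma$ for $v\in\tilde V^{0,h}_{\Gamma_0}$. Likewise $\sym\grad w(z) = \partial_{\unitvec{n}_\Gamma}(w\cdot\unitvec{n}_\Gamma)(z)\,\unitvec{n}_\Gamma\otimes\unitvec{n}_\Gamma$ for $w\in\tilde V^{1,h}_{\Gamma_0}$. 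Both coefficients are free degrees of freedom in \cref{lem:min-h2-space-dofs-bcs,lem:min-h1curl-space-dofs-bcs}. Hence $Q^0$ and $Q^1$ each carry an extra copy of $\mathbb{R}$ per flat boundary vertex; the jets vanish only at non-flat boundary vertices.

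\textbf{The fix.} The conclusion survives if you argue as follows. Set $J^0 v := (\hess v(z))_z$ and $J^1 w := (\sym\grad w(z))_z$. Then $Q^0\cong\image J^0$, $Q^1\cong\image J^1$, and $J^1\circ\grad = J^0$. So the induced map is the inclusion $\image J^0\subseteq \image J^1$, which is automatically injective. It is surjective if and only if $\image J^1\subseteq\image J^0$. That inclusion follows from two inputs:
\begin{enumerate}
\item the constraints of \cref{lem:vertex-trace-bcs} on $\tilde V^{1,h}_{\Gamma_0}$;
\item the attainability statement in Step~2 of \cref{lem:min-h2-space-dofs-bcs}, including the $\partial^2_{\unitvec{n}_\Gamma}$ degree of freedom at flat vertices.
\end{enumerate}
These are the lemmas you need, not the purely local \cref{lem:walkington-dofs-unisolvent,lem:h1curl-local-unisolvence} you cite. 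Surjectivity on the $\tilde V^{1,h}_{\Gamma_0}$ side is not needed at all.
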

On noting that $\hess W^{0, h}(K) = 0$ and $\sym \grad W^{1, h}(K) = 0$,
the proof of \cref{thm:cohomology-reduced-alfeld-reduced-bcs} is completely
analogous to the proof of \cref{thm:cohomology-reduced-alfeld-bcs}
and is therefore omitted. One may readily see that if any additional degrees 
of freedom from \cref{eq:walkington-dofs} or \cref{eq:h1curl-dofs}
are set to zero, then we would lose the inclusion 
$W^{k, h}(K) \subset \hat{V}^{k, h}(K_A)$, and so 
\cref{eq:stokes-complex-reduced-alfeld-reduced} may be seen as the
Alfeld-split macroelement Stokes complex analog of the Whitney complex.

\subsection{Bounded commuting cochain projections for ``minimal'' complex}

With the cohomology of \cref{eq:stokes-complex-reduced-alfeld-bcs}
fully characterized, the last remaining components used extensively in the 
FEEC literature are bounded commuting cochain projections. One possible
avenue is to modify the construction of locally $L^2$-bounded cochain 
projections in \cite{HuLiangLin25} to take into account the boundary conditions
analogously to the construction of Cl\'{e}ment interpolants \cite{Clement75}.
Instead, we construct Scott-Zhang \cite{ScottZhang90} type interpolants
that also commute. 

To describe the result, we define for an open set $\mathcal{O}$
the following norms: $\| \cdot \|_{V^0(\mathcal{O})}$ the
$H^2(\mathcal{O})$ norm,  $\| \cdot \|_{V^1(\mathcal{O})}$ the 
$H^1(\curl; \mathcal{O})$ norm, $\| \cdot \|_{V^2(\mathcal{O})}$
the $H^1(\mathcal{O})^3$ norm, and $\| \cdot \|_{V^3(\mathcal{O})}$
the $L^2(\mathcal{O})$ norm. Additionally, given a tetrahedron $K \in \mesh$, 
let $\omega_K$ denote the 1 element neighborhood of $K$:
\begin{align*}
	\omega_K := \mathrm{int} \left( \bigcup 
	\{ \bar{K}' \in \mesh : \bar{K} \cap \bar{K}' \neq \emptyset \} \right).
\end{align*}
The locally bounded cochain projections are summarized in the following result.
\begin{theorem}
	\label{thm:bounded-cochain-projections}
	Let $\tilde{\Pi}^3 := V^3 \to \tilde{V}^{3, h}$ be the 
	$L^2(\Omega)$-orthogonal projection. Then, there exist linear projection 
	operators $\tilde{\Pi}^k : V^k \to \tilde{V}^{k, h}$, $k \in 0:2$,
	such that $\{ \tilde{\Pi}^k \}_{k=0}^{3}$ satisfying the following:	
	\begin{enumerate}
		\item[(i)] Trace preservation: 
		$\tilde{\Pi}^k : V^k_{\Gamma_0} \to \tilde{V}_{\Gamma_0}^{k, h}$.
		
		\item[(ii)] Local boundedness: 
		\begin{align}
			\label{eq:cochain-projections-locally-bounded}
			\| \tilde{\Pi}^k v\|_{V^k(K)} \leq C \|v\|_{V^k(\omega_K)}
			\qquad \forall K \in \mesh,
		\end{align}
		where $C$ depends only on $\Omega$, $\Gamma_0$, and shape regularity.
		
		\item[(iii)] Commuting diagram: 
		\begin{equation}
			\label{eq:stokes-complex-reduced-commuting-diagram-bcs}
			\begin{tikzcd}[ampersand replacement=\&]
				V^{0}_{\Gamma_0} \arrow[r, "\grad"] \arrow[d, "\tilde{\Pi}^0"]
				\& V^{1}_{\Gamma_0} \arrow[r, "\curl"] \arrow[d, "\tilde{\Pi}^1"]
				\& V^{2}_{\Gamma_0} \arrow[r, "\div"] \arrow[d, "\tilde{\Pi}^2"]
				\& V^{3}_{\Gamma_0} \arrow[d, "\tilde{\Pi}^3"] \\
				\tilde{V}^{0, h}_{\Gamma_0} \arrow[r, "\grad"] 
				\& \tilde{V}^{1, h}_{\Gamma_0} \arrow[r, "\curl"]
				\& \tilde{V}^{2, h}_{\Gamma_0} \arrow[r, "\div"]
				\& \tilde{V}^{3, h}_{\Gamma_0}.
			\end{tikzcd}
		\end{equation}
	\end{enumerate}
\end{theorem}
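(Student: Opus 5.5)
We construct Scott--Zhang type operators by prescribing the degrees of freedom of each space through averaged functionals. The functionals for $\tilde{V}^{k,h}$ are grouped into two kinds. The first kind are the ``Whitney'' functionals \cref{eq:whitney-dofs-currents}, namely vertex values, edge tangential moments, face fluxes and cell integrals. The second kind are the ``extra'' vertex derivative functionals in \cref{eq:walkington-dofs}, \cref{eq:h1curl-dofs-vertices} and \cref{eq:gn-dofs}.

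\emph{Step 1: fix local averaging data.} For each vertex $z$ we choose a simplex $\sigma_z\ni z$ of maximal dimension: a tetrahedron, or a boundary triangle in $\boundarymesh$ when $z\in\bar\Gamma_0$. For each edge $e$ and face $f$ we choose a similar simplex, with a triangle in $\boundarymesh$ whenever $e\subset\bar\Gamma_0$ or $f\subset\bar\Gamma_0$. This is the Scott--Zhang rule. Point values and derivatives at $z$ are replaced by $L^2(\sigma_z)$ dual-basis averages. These averages reproduce polynomials of the relevant degree and, by construction, vanish on traces that vanish on $\Gamma_0$.

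\emph{Step 2: make the Whitney functionals commute.} Point evaluation $v(z)$, $v\in H^2$, is replaced by an averaged value $M_z v$. The edge functional $\mathcal{I}^1_e$ on $V^1$ is replaced by an averaged edge moment. We average $\int_{e_y}v\cdot\unitvec t$ over a family of segments $e_y$ obtained by translating $e$ by $y$ ranging over a small ball $B_z$ attached to $\sigma_e$, with endpoint weights matching those used in $M_z$. Smoothing along ``parallel copies'' gives
\[
\mathcal{I}^1_e(\grad v)=M_{z_1}v-M_{z_0}v,
\]
and the analogous Stokes identities hold for faces and cells. We prefer the more robust alternative: a single mollified operator in the spirit of Falk--Winther. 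The extra functionals are defined by applying $M_z$-type averages to $\grad v$, $\hess v$, $\curl v$ and $\grad\curl v$. Because $\grad$ maps vertex derivatives of order $1$--$2$ of $v$ onto derivatives of order $0$--$1$ of $\grad v$, consistent averages commute automatically. The remaining extra DOFs of $\tilde V^{1,h}$ are of the form $\grad v(z)$, and $\curl$ maps them to skew parts, which are compatible with the vertex values of $\tilde V^{2,h}$. We therefore define
\[
\tilde\Pi^1 v := \text{(DOF-interpolant)} + \text{correction},
\]
where the correction is a local discrete $\grad$-potential fixed by exactness of \cref{eq:stokes-complex-reduced-alfeld-element}. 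This follows the standard ``$\Pi^{k}=\Pi^k_0+\dee^{k-1}$-correction'' device.

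\emph{Step 3: verify the properties.} The projection property holds because every averaged functional equals the original DOF on discrete functions, which are piecewise polynomial near $\sigma$. For functions in $V^k_{\Gamma_0}$, the DOFs on $\Gamma_0$ vanish since they are averaged over $\Gamma_0$-triangles, which gives (i). Local bounds (ii) follow from trace and inverse inequalities on $\sigma$ together with scaling of the local basis, since everything is supported in $\omega_K$. Commutativity (iii) is checked DOF by DOF using the unisolvence of \cref{lem:walkington-dofs-unisolvent,lem:h1curl-local-unisolvence} and \cref{eq:gn-dofs}, together with Stokes' theorem on each simplex.

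The main obstacle is Step 2. We must reconcile boundary-face averaging, which is needed for trace preservation, with the commuting identities. A Scott--Zhang average of $v$ over a boundary triangle does not in general commute with the vertex averages of $\grad v$ taken over a tetrahedron. We will try first to use the same $\sigma_z$ for all derivatives at a vertex. When $\sigma_z\subset\Gamma_0$, we use tangential derivatives on $\sigma_z$ plus normal derivatives averaged from an adjacent tetrahedron. Vanishing is then obtained from $D^\alpha v|_{\Gamma_0}=0$, which kills all derivatives up to order $k-1$ on $\Gamma_0$.
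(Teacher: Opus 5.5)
Your proposal has a genuine gap, and it sits exactly at the step you flag as the main obstacle. You replace the Whitney functionals $\mathcal{I}^k_\tau$ (vertex values, edge tangential moments, face fluxes, cell integrals) by averaged versions. Those averages must then commute, reproduce discrete functions, and vanish on $\Gamma_0$ all at once, and the proposal never achieves this. The specific devices you name fail.
\begin{itemize}
\item Averaging $\int_{e_y} v\cdot\unitvec{t}_e$ over translated segments $e_y$ does not return $\mathcal{I}^1_e(v)$ for a piecewise polynomial $v$, because the $e_y$ cut through neighbouring elements. So your Step~3 claim that every averaged functional equals the original DOF on discrete functions is false, and $\tilde\Pi^1$ would not be a projection.
\item The identity $\mathcal{I}^1_e(\grad v)=M_{z_1}v-M_{z_0}v$ forces $M_z$ to be the matching translation average. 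That is not the Scott--Zhang $L^2(\sigma_z)$ dual-basis average you also want $M_z$ to be.
\item Translated segments or cells attached to $\Gamma_0$ leave $\Gamma_0$, so the averaged functionals do not vanish on $V^k_{\Gamma_0}$.
\end{itemize}
The fallback, ``a single mollified operator in the spirit of Falk--Winther'' plus an unspecified $\grad$-correction, is not a construction. Mollify-then-interpolate is not a projection: Christiansen--Winther must invert it on the discrete space, which is nonlocal. Falk--Winther's local projections are built quite differently.

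The missing observation is that the Stokes complex needs no averaging of the Whitney functionals, because they are already continuous on $V^k$:
\begin{itemize}
\item $v\mapsto v(z)$ on $H^2(\Omega)\subset C(\bar\Omega)$;
\item $v\mapsto\int_e v\cdot\unitvec{t}_e\d{s}$ on $H^1(\curl;\Omega)$;
\item $v\mapsto\int_f v\cdot\unitvec{n}_f\d{s}$ on $H^1(\Omega)^3$, by the trace theorem;
\item cell integrals on $L^2$.
\end{itemize}
The paper therefore imposes $\mathcal{I}^k_\tau(\tilde\Pi^k v)=\mathcal{I}^k_\tau(v)$ exactly for all $\tau\in\Delta_k(\mesh)$. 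These DOFs then commute by Stokes' theorem, are trivially reproduced on discrete functions, and vanish for $\tau\in\Delta_k(\boundarymesh)$ when $v\in V^k_{\Gamma_0}$.

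Only the supplementary vertex DOFs are averaged, through $L^2$ projections onto polynomials on a face $f_z$ or cell $\tau_z$ applied to $\grad^{1-k}v^k$. Here your ``consistent averages'' idea is the right one. The one extra ingredient is to set $\grad\tilde\Pi^1 v(z):=S_z^1+\tfrac12\mskw(\mathbb{P}^2_{3,f_z}\curl v)(z)$, with $S_z^1$ assembled from the same projections as $\hess\tilde\Pi^0$. Its skew part matches $\tilde\Pi^2\curl v(z)$ and its symmetric part matches $\hess\tilde\Pi^0$ on gradients, so no correction term is needed.

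Your boundary rule also needs a case split. Taking tangential components from a triangle of $\boundarymesh$ and the normal component from an adjacent tetrahedron is correct only at $z\in\Delta_0^{\flat}(\boundarymesh)$. There, by \cref{lem:vertex-trace-bcs}, the discrete space keeps a free $\partial^2_{\unitvec{n}_\Gamma}$ (resp.\ $\partial_{\unitvec{n}_\Gamma}(v\cdot\unitvec{n}_\Gamma)$) DOF that must be reproduced. At the remaining vertices of $\boundarymesh$ all vertex-derivative DOFs of $\tilde V^{k,h}_{\Gamma_0}$ vanish, so every component must be computed from two non-coplanar triangles of $\boundarymesh$. Otherwise a tetrahedral average of $\partial^2_{\unitvec{n}_\Gamma} v$ is generally nonzero and (i) fails.
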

The proof of \cref{thm:bounded-cochain-projections} appears below in 
\cref{sec:proof-bounded-cochain-projections}. Owing to the regularity of the 
spaces appearing in the Stokes complex \cref{eq:stokes-complex-bcs-feec}, 
the construction of these interpolants involves standard techniques 
similar to those in \cite{GiraultScott02,ScottZhang90} in contrast to 
the more sophisticated techniques employed for discretizations 
of the de Rham complex \cref{eq:de-rham-complex-bcs-feec}; 
see e.g. \cite{ChaumontVohralik24,ErnGudiSmearsVohralik22,FalkWinther14}
and references therein.

\begin{remark}
	Since the cohomologies of the first two rows of 
	\cref{eq:stokes-complex-reduced-commuting-diagram-bcs} are isomorphic,
	the first two sentences of the proof of \cite[Theorem 5.1]{Arnold18} 
	shows that $\tilde{\Pi}^k$ is an isomorphism between the cohomologies.
	Note that we may replace the first row in
	\cref{eq:stokes-complex-reduced-commuting-diagram-bcs} with
	the spaces $\{ V_{\Gamma_0}^{k,h} \}$ (for any $p \geq 3$), 
	$\{ \tilde{V}_{\Gamma_0}^{k, h} \}$, or 
	$\{ \hat{V}_{\Gamma_0}^{k,h} \}$ (where $\hat{V}_{\Gamma_0}^{k,h} := \tilde{V}_{\Gamma_0}^{k,h}$ for 
	$k \in 2:3$) defined on any other conforming mesh and obtain 
	a commuting diagram. For any of these replacements, the cohomologies 
	of the two complexes in 
	\cref{eq:stokes-complex-reduced-commuting-diagram-bcs} are  
	isomorphic with $\tilde{\Pi}^k$ again being an isomorphism between the 
	cohomologies.
\end{remark}

\begin{remark}
	The operators $\tilde{\Pi}^k$, $k \in 0:1$, may be trivially modified 
	so that $\tilde{\Pi}^k : V^k \to \hat{V}^{k, h}$ and 
	the conclusions of \cref{thm:bounded-cochain-projections} hold with 
	$\tilde{V}^k$ replaced by $\hat{V}^k$.
\end{remark}

\subsection{Outline}

The remainder of the manuscript is organized as follows. In 
\cref{sec:cohomology-full-poly-space}, we show how the framework 
from \cite{HuLiangLin25} applies to the complex with full polynomial
spaces \cref{eq:stokes-complex-full-alfeld-bcs} and modify the framework 
to account for boundary conditions. Then, in \cref{sec:gn-complex-construction},
we construct the reduced spaces $\tilde{V}^{k, h}$, $k \in 0:1$, show that 
the degrees of freedom in \cref{eq:walkington-dofs,eq:h1curl-dofs} are 
unisolvent, and demonstrate how boundary conditions may be incorporated into 
the spaces.
The cohomology of the complex 
\cref{eq:stokes-complex-reduced-alfeld-bcs} is the focus of 
\cref{sec:cohomology-reduced}, and the bounded cochain projections 
are constructed in \cref{sec:proof-bounded-cochain-projections}.

\section{Cohomology of full polynomial spaces}
\label{sec:cohomology-full-poly-space}

We mostly follow the framework in \cite{HuLiangLin25}, originally developed 
for determining the cohomology of discrete complexes spaces without boundary 
conditions, with some modification to handle the case of mixed boundary 
conditions in the Stokes complex. We first require some additional 
notation. Give a collection of $d$-dimensional simplices $\mathcal{S}$, let 
$\Delta(\mathcal{S}) := \bigcup_{\ell=0}^{d} \Delta_{\ell}(\mathcal{S})$
denote the collection of all subsimplices of $\mathcal{S}$. Moreover,
given a simplex $\tau$, we say $\eta \subsimplex \tau$ if 
$\eta$ is a subsimplex of $\tau$ and $\eta \strictsubsimplex \tau$
if $\eta \subsimplex \tau$ and $\eta \neq \tau$.

\subsection{Trace structure}

We begin by defining ambient spaces for the various trace operators we 
will define. 
For $K \in \mesh$, we take $A^k(K) := V^{k, h}(K_A)$.
For $\tau \in \Delta_{1}(\mesh) \cup \Delta_{2}(\mesh)$
and $z \in \Delta_0(\mesh)$, we define 
\begin{alignat*}{2}
	A^0(\tau) &:= \mathcal{P}_{p+2}(\tau) \oplus \mathcal{P}_{p+1}(\tau)^3
	\oplus \bigoplus_{z \in \Delta_0(\tau)} 
	\mathbb{R}^{3 \times 3}_{\sym}, 
	\qquad &
	A^0(z) &:= \mathbb{R} \oplus \mathbb{R}^3 \oplus \mathbb{R}^{3 \times 3}, \\
	A^1(\tau) &:= \mathcal{P}_{p+1}(\tau)^3 \oplus \mathcal{P}_{p}(\tau)^3
	\oplus \bigoplus_{z \in \Delta_0(\tau)} 
	\mathbb{R}^{3 \times 3}, 
	\qquad & 
	A^1(z) &:= \mathbb{R}^3 \oplus \mathbb{R}^{3 \times 3}, \\
	A^2(\tau) &:= \mathcal{P}_{p}(\tau)^3, 
	\qquad &
	A^2(z) &:= \mathbb{R}^3,
\end{alignat*}
where $\mathbb{R}^{3 \times 3}_{\sym}$ denotes the set of $3\times 3$ symmetric 
matrices with real entries. 

For each $k \in 0:3$, we  
show that $A^k := \{ A^k(\tau) : \tau \in \Delta(\mesh) \}$
may be equipped with a trace structure in the sense of 
\cite[Definition 2.1]{HuLiangLin25}.
That is, for $\tau \in \Delta(\mesh)$ and $\eta \subsimplex \tau$,
we define linear operators 
$\Tr^k_{\eta \from \tau} : A^k(\tau) \to A^k(\eta)$ satisfying the 
following properties:
\begin{enumerate}
	\item[(i)] $\Tr^k_{\tau \from \tau}$ is the identity map for all 
	$\tau \in \Delta(\mesh)$.
	
	\item[(ii)] For all $\sigma \in \mesh$ and 
	$\eta \subsimplex \tau \subsimplex \sigma$, there holds 
	\begin{align}
		\label{eq:trace-vanishing-composition}
		\Tr^k_{\eta \from \sigma} u = 0 \implies 
		\Tr^k_{\eta \from \tau} \circ \Tr^k_{\tau \from \sigma} u = 0
		\qquad \forall u \in A^k(\sigma).
	\end{align}
	
	\item[(iii)] $\Tr^k := \{ \Tr^k_{\eta \from \tau} 
	: \eta \subsimplex \tau, \ \tau \in \Delta(\mesh) \}$
	characterizes $V^{k, h}$:
	\begin{multline}
		\label{eq:trace-characterizes-space}
		V^{k, h} = \{ u \in L^2(\Omega) \otimes \mathbb{X}^k : 
		u|_{K} \in A^k(K) \text{ and } \\
		\Tr^k_{\tau \from K} u|_{K} = \Tr^k_{\tau \from K'} u|_{K'} 
		\ \forall \tau \subsimplex K, K', 
		\ \forall K, K' \in \mesh\}.		
	\end{multline}
\end{enumerate}
Recall that $\mathbb{X}^k$ are the global ambient spaces 
defined in \cref{eq:global-ambient-spaces}.
If $(A^k, \Tr^k)$ satisfy (i-ii), then $(A^k, \Tr^k)$ is a trace structure 
in the sense of \cite[Definition 2.1]{HuLiangLin25}, while (iii)
ensures that $V^{k, h}$ is the ``global space'' with respect to 
the trace structure \cite[Definition 2.2]{HuLiangLin25}. In the following 
subsections, we construct the trace operators $\Tr^k_{\eta \from \tau}$ for 
$\eta \neq \tau$, tacitly assuming that (i) always holds.

\subsubsection{Trace structure on $A^0$}

For $K \in \mesh$ and $\tau \strictsubsimplex K$, we define for $v \in A^0(K)$
\begin{align*}
	\Tr^0_{\tau \from K} v &= v|_{\tau} \oplus \grad v|_{\tau} 
	\oplus \bigoplus_{z \in \Delta_0(\tau)} \hess v(z),
\end{align*}
where $w|_{\tau} := w(\tau)$ if $\dim \tau = 0$ and we recall that 
$\hess$ denotes the Hessian operator.
Given $\tau \in \Delta_{1}(\mesh) \cup \Delta_{2}(\mesh)$ 
and $\eta \strictsubsimplex \tau$, we define for $\phi \oplus \psi 
\oplus \bigoplus_{z \in \Delta_0(\tau)} M_z \in A^0(\tau)$
\begin{align*}
	\Tr^0_{\eta \from \tau} \left( \phi \oplus \psi
	\oplus \bigoplus_{z \in \Delta_0(\tau)} M_z \right)
	&= \phi|_{\eta} \oplus \psi|_{\eta}  
	\oplus \bigoplus_{z \in \Delta_0(\eta)} M_z.
\end{align*}
Then, one may readily verify that 
\cref{eq:trace-characterizes-space,eq:trace-vanishing-composition} hold
for $k = 0$.

\subsubsection{Trace structure on $A^1$}

For $K \in \mesh$ and $\tau \strictsubsimplex K$, we define for $v \in A^1(K)$
\begin{align*}
	\Tr^1_{\tau \from K} := \begin{dcases}
		v|_{\tau} \oplus \curl v|_{\tau} 
		\oplus \bigoplus_{z \in \Delta_0(\tau)} \grad v(z)
		& \text{if } \dim \tau > 0, \\
		v(\tau) \oplus \grad v(\tau) & \text{if } \dim \tau = 0.
	\end{dcases}
\end{align*}
For $\tau \in \Delta_1(\mesh) \cup \Delta_2(\mesh)$ and 
$\eta \strictsubsimplex \tau$,
we define for $\phi \oplus \psi \oplus 
\bigoplus_{z \in \Delta_0(\tau)} M_z \in A^1(\tau)$
\begin{align*}
	\Tr^1_{\eta \from \tau} \left( 
	\phi \oplus \psi \oplus 
	\bigoplus_{z \in \Delta_0(\tau)} M_z \right) 
	:= \begin{dcases}
		\phi|_{\eta} \oplus \psi|_{\eta} 
		\oplus \bigoplus_{z \in \Delta_0(\eta)} M_z 
		& \text{if } \dim \eta > 0, \\
		\phi(\eta) \oplus M_{\eta} & \text{if } \dim \eta = 0.
	\end{dcases}
\end{align*}
Then, we may readily verify that 
\cref{eq:trace-vanishing-composition,eq:trace-characterizes-space} hold
for $k = 1$.

\subsubsection{Trace structure on $A^2$ and $A^3$}

For $\tau \in \Delta(\mesh)$ and $\eta \strictsubsimplex \tau$, we define 
for ${v \in A^2(\tau)}$
\begin{align*}
	\Tr_{\eta \from \tau}^2 v = v|_{\eta},
\end{align*}
while for $w \in A^3(\tau)$, we set 
$\Tr_{\eta \from \tau}^3 w = 0$. Then, 
\cref{eq:trace-vanishing-composition,eq:trace-characterizes-space} hold
for $k = 2,3$.

\subsection{Bubble spaces}

Following \cite[Definition 2.3]{HuLiangLin25}, for $K \in \mesh$
and $\tau \subsimplex  K$, we define $B^k(\tau; K)$ by
\begin{align}
	\label{eq:generic-bubble-space}
	B^k(\tau; K) := \{ v \in \Tr^k_{\tau \from K} A^k(K) : 
	\Tr^k_{\eta \from \tau} v = 0 \ \forall \eta \strictsubsimplex \tau \}.
\end{align}
We now show that each trace structure $(A^k, \Tr^k)$ satisfies the 
geometric decomposition property \cite[Definition 2.4]{HuLiangLin25}:
\begin{align}
	\label{eq:geometric-decomposition}
	\sum_{\tau \subsimplex K} \dim B^k(\tau; K) = \dim A^k(K) 
	\qquad \forall K \in \mesh.
\end{align}
In the following subsections, let $K \in \mesh$, $f \in \Delta_2(K)$, 
$e \in \Delta_1(K)$, and $z \in \Delta_0(K)$.

\subsubsection{The case $k=0$}

Expanding definitions, we see that
\begin{align*}
	B^0(K; K) &= V_0^{0, h}(K_A), \\
	B^0(f; K) &= \left\{ v \oplus w 
	\oplus 0 \in A^0(f) : 
	v \in H^2_0(f), \ w \in H^1_0(f)^3, 
	\right. 
	\\
	&\qquad \qquad 
	\left. 
	\text{and }
	(I - \unitvec{n}_f \otimes \unitvec{n}_f) w 
	= \grad_f v 
	\right\}, \\
	B^0(e; K) &= \left\{ v \oplus w 
	\oplus 0 \in A^0(e) : 
	v \in H^3_0(e), w \in H^2_0(e)^3, \text{ and }
	w \cdot \unitvec{t}_e = \partial_{\unitvec{t}_e} v 
	\right\}, \\
	B^0(z; K) &= \mathbb{R} \oplus \mathbb{R}^3 
	\oplus \mathbb{R}_{\sym}^{3 \times 3},
\end{align*}
where $\grad_f$ denotes the surface gradient 
(viewed as an element of $\mathbb{R}^3$).
As a consequence, we obtain
\begin{align*}
	\dim B^0(f; K) &= \dim \mathcal{P}_{p-4}(f) + \dim \mathcal{P}_{p-2}(f), \\
	\dim B^0(e; K) &= \dim \mathcal{P}_{p-4}(e) + \dim \mathcal{P}_{p-3}(e)^2,
\end{align*}
and so performing a direct calculation 
and applying \cite[Lemma 4.8]{FuGuzmanNeilan20}
shows that \cref{eq:geometric-decomposition} holds for $k=0$.

\subsubsection{The case $k=1$}

Expanding definitions gives
\begin{align*}
	B^1(K; K) &=  V_0^{1, h}(K_A), \\
	B^1(f; K) &= \left\{ v \oplus w \oplus 0 \in A^1(f) 
	: v,w \in H^1_0(f)^3, \
	\text{and }
	w \cdot \unitvec{n}_f = \rot_f v
	\right\}, \\
	B^1(e; K) &= \left\{ 
	v \oplus w \oplus 
	0 
	\in A^1(e) 
	: v \in H^2_0(e) \text{ and } w \in H^1_0(e) \right\}, \\
	B^1(z; K) &= \mathbb{R}^3 \oplus \mathbb{R}^{3 \times 3},
\end{align*}
where $\rot_f$ is the surface curl defined so that 
$\rot_f v|_f := \curl v \cdot \unitvec{n}_f|_{f}$ for
$v \in C^{\infty}(\mathbb{R}^3)^3$.
Note that every $v \in \mathcal{P}_{p+1}(f)$ satisfies
\begin{align*}
	v|_{\partial f} = 0
	\text{ and } \rot_f v|_{\partial f} = 0 
	\implies
	\frac{\partial }{\partial \unitvec{t}_e \times \unitvec{n}_f} 
	v \cdot \unitvec{t}_{e} = 0 \qquad \forall e \in \Delta_1(f), 
\end{align*}
and so 
\begin{align*}
	\dim B^1(f; K) &= \dim \mathcal{P}_{p-2}(f)^3 
	-  \sum_{e' \in \Delta_1(f)} \dim \mathcal{P}_{p-2}(e') 
	+ \dim \mathcal{P}_{p-3}(f)^2  \\
	&= \dim \mathcal{P}_{p-2}(f) + \left(  p(p-1) - 
	3(p-1) \right) + \dim \mathcal{P}_{p-3}(f)^2.
\end{align*}
Moreover, $\dim B^1(e; K) = \dim \mathcal{P}_{p-3}(e)^3 
+ \dim \mathcal{P}_{p-2}(e)^3$,
and so performing a direct
calculation and applying \cite[Lemma 4.11]{FuGuzmanNeilan20} 
shows that \cref{eq:geometric-decomposition} holds for $k=1$.

\subsubsection{The case $k=2,3$}

The case $k=2$ corresponds to the usual bubble spaces:
$B^2(K; K) = V^{2, h}_0(K_A)$,
$B^2(\tau; K) = \mathcal{P}_{p}(\tau)^3 \cap H^1_0(\tau)^3$
for $\tau \in \Delta_1(K) \cup \Delta_2(K)$,
and $B^2(z; K) = \mathbb{R}^3$. Moreover, 
$B^3(K; K) = A^3(K)$, and so \cref{eq:geometric-decomposition}
holds for $k=2,3$.

\subsection{Trace complexes}

Again let $K \in \mesh$, $f \in \Delta_2(K)$, 
$e \in \Delta_1(K)$, and $z \in \Delta_0(K)$, and 
consider the following diagram:
\begin{equation}
	\label{eq:full-spaces-trace-complex}
	\begin{tikzcd}[ampersand replacement = \&, column sep = 8em]
		A^0(K) \arrow[r, "\dee_K^0"]
		\arrow[d, "\Tr"]
		\arrow[dd, bend right=60, "\Tr" description]
		\arrow[ddd, bend right=70, "\Tr" description]
		\& A^1(K) \arrow[r, "\dee_K^1"]
		\arrow[d, "\Tr"] 
		\& A^2(K) 
		\arrow[d, "\Tr"] \\
		A^0(f) \arrow[r, "\dee^{0}_{f}"]
		\& A^1(f) \arrow{r}{\dee^{1}_{f}} 
		\& A^2(f) \\
		A^0(e) \arrow{r}{\dee^{0}_{e}}
		\& A^1(e) \arrow{r}{\dee^{1}_{e}} 
		\arrow[from=uu, bend right=60, "\Tr" description, 
		pos=0.66]
		\& A^2(e)
		\arrow[from=uu, bend right=60, "\Tr" description,
		pos=0.66] \\
		A^0(z) \arrow{r}{\dee^{0}_{z}}
		\& A^1(z) \arrow{r}{\dee^{1}_{z}}
		\arrow[from=uuu, bend right=70, "\Tr" description]
		\& A^2(z),
		\arrow[from=uuu, bend right=70, "\Tr" description]
	\end{tikzcd}
\end{equation}
where the vertical arrows are the corresponding trace operators 
(with sub and superscripts omitted) and 
the ``differential'' operators on the horizontal arrows are defined 
as follows: Let $\dee^0_K := \grad$ and $\dee^1_K = \curl$.
For $\tau \in \Delta_1(K) \cup \Delta_2(K)$, let
\begin{align*}
	\dee_{\tau}^0 \left( v \oplus w 
	\oplus \bigoplus_{z \in \Delta_0(\tau)} M_z \right)
	&:= w \oplus 0 
	\oplus \bigoplus_{z \in \Delta_0(\tau)} M_z, \\
	\dee_{\tau}^1 \left( v \oplus w 
	\oplus \bigoplus_{z \in \Delta_0(\tau)} M_{\tau} \right)
	&:= w.
\end{align*}
while for $z \in \Delta_0(K)$, we define 
\begin{align*}
	\dee_z^0 \left( c \oplus v \oplus M \right) &= 
	v \oplus M
	\quad \text{and} \quad 
	\dee_z^1 \left( v \oplus M \right) 
	= \sum_{i, j, k = 1}^{3}\epsilon_{ijk} M_{j, i} \unitvec{e}_k,
\end{align*}
where $\unitvec{e}_k$ is the standard unit vector in the $k$-th direction, and $\epsilon_{ijk}$ is the permutation symbol. Then, one may verify that \cref{eq:full-spaces-trace-complex} commutes
and each row is a complex. Thus, $(A^k, \Tr^k, \dee^k)$  is a conforming 
finite element subcomplex with trace structure (FECTS) 
(\cite[Definition 2.6]{HuLiangLin25}) of the de Rham complex.

\subsection{Compatible currents and bubble complex}

For $k \in 0:3$ and $\tau \in \Delta_k(\mesh)$, 
we recall the  ``currents"  
$\mathcal{I}_{\tau}^k : C(\bar{\Omega}) \otimes \mathbb{X}^k \to \mathbb{R}$
defined in \cref{eq:whitney-dofs-currents}.
In particular, the Stokes formula holds for 
$\sigma \in \Delta_{k+1}(\mesh)$ and 
$w \in C(\bar{\Omega}) \otimes \mathbb{X}^k$:
\begin{align}
	\label{eq:currents-stokes-thm}
	\mathcal{I}^{k+1}_{\sigma}(\dee^k w) 
	= \sum_{\tau \in \Delta_{k}(\sigma)} \mathcal{O}(\tau, \sigma) 
	\mathcal{I}^{k}_{\tau}(w),
\end{align}
where $\mathcal{O}(\tau, \sigma)$ denotes the orientation of $\tau$ relative 
to $\sigma$. Thus, $(\mathcal{I}, \mathbb{R})$ is a family of generalized 
currents \cite[Definition 2.5]{HuLiangLin25}. Note that \cite{HuLiangLin25}
assumes that the domain of $\mathcal{I}_{\tau}^k$ is
$C^{\infty}(\bar{\Omega}) \otimes \mathbb{X}^k$; however, one only needs that 
$\mathcal{I}^k_{\tau}$ is well-defined on $V^{k, h}$.

We also see that for each $k \in 0:3$ and 
$\tau \in \Delta_k(\mesh)$, the functionals
${\tilde{\mathcal{I}}_{\tau}^k : A^k(\tau) \to \mathbb{R}}$ defined by 
\begin{alignat*}{2}
	\tilde{\mathcal{I}}^0_{\tau} ( c \oplus v \oplus M ) &:= c, 
	\qquad & \tilde{\mathcal{I}}^2_{\tau} (w) &:= 
	\int_{\tau} w \cdot \unitvec{n}_{\tau} \d{s}, \\
	\tilde{\mathcal{I}}^1_{\tau} \left(\phi \oplus \psi \oplus
	\bigoplus_{z \in \Delta_0(\tau)} M_z \right) &:= 
	\int_{\tau} \phi \cdot \unitvec{t}_{\tau} \d{s}, 
	& \qquad
	\tilde{\mathcal{I}}^3_{\tau} (q) &:= \int_{\tau} q \d{x},
\end{alignat*}
satisfy
\begin{align}
	\label{eq:current-trace-compatibility}
	\tilde{\mathcal{I}}_{\tau}^k( \Tr^k_{\tau \from \sigma} w) = 
	\mathcal{I}_{\tau}^k(w) \qquad \forall w \in A^{k}(\sigma),
	\ \forall \sigma \in \mesh : \tau \subsimplex \sigma.
\end{align}
Note that above, we are viewing $A^k(\sigma)$ as defined on all of $\Omega$,
where any $V^{k}$-continuous extension is taken.

The bubble spaces \cref{eq:generic-bubble-space} do not depend on the parent 
tetrahedron, so we drop ``$K$" from the notation. Define the modified 
bubble spaces 
\begin{align*}
	\tilde{B}^k(\tau) := \begin{cases}
		B^k(\tau) \cap \ker \tilde{\mathcal{I}}_{\tau}^k 
		& \text{if } k = \dim \tau, \\
		B^k(\tau) & \text{otherwise}.
	\end{cases}
\end{align*}
The next result shows that these modified bubble spaces form an exact 
sequence.
\begin{lemma}
	\label{lem:current-bubble-exactness}
	The following complex is exact for any $\tau \in \Delta(\mesh)$:
	\begin{equation}
		\label{eq:modified-bubble-complex}
		\begin{tikzcd}[ampersand replacement = \&]
			0 \arrow[r] 
			\& \tilde{B}^0(\tau) \arrow[r, "\dee_{\tau}^0"]
			\& \tilde{B}^1(\tau) \arrow[r, "\dee_{\tau}^1"] 
			\& \tilde{B}^2(\tau) \arrow[r, "\dee_{\tau}^2"] 
			\& \tilde{B}^3(\tau) \arrow[r] 
			\& 0.
		\end{tikzcd}
	\end{equation}
\end{lemma}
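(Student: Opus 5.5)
We argue by cases on $\dim \tau$, since the bubble spaces and the maps $\dee_\tau^k$ have a different explicit form in each case. Throughout we use the explicit descriptions of $B^k(\tau)$ computed above. We also use the fact that $\tilde{B}^k(\tau)$ differs from $B^k(\tau)$ only when $k = \dim\tau$, and then only by removing the one-dimensional direction detected by $\tilde{\mathcal{I}}^k_\tau$.

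\emph{Case $\dim\tau = 3$.} Here $\tau = K$, and the complex is
\[
0 \to V_0^{0,h}(K_A) \to V_0^{1,h}(K_A) \to V_0^{2,h}(K_A) \to V^{3,h}(K_A)\cap\ker\textstyle\int_K \to 0 .
\]
This is exactly the bubble Stokes complex with homogeneous boundary conditions. Its exactness for $p\ge3$ is established in \cite[Theorem 4.7/Section 4]{FuGuzmanNeilan20}, the companion to the exactness of \cref{eq:stokes-complex-local-full-alfeld}, and we invoke it. If only the unconstrained version is available, we argue instead by dimension counting. The Euler characteristic of the bubble complex follows from \cref{eq:geometric-decomposition} applied to the exact complex \cref{eq:stokes-complex-local-full-alfeld}, together with exactness of the lower-dimensional bubble complexes, which we handle first. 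Injectivity of $\grad$ on $H^2_0$ and the surjectivity of $\div$ onto mean-zero $\DG^{p-1}$ (Stokes inf-sup on Alfeld splits) then give exactness.

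\emph{Cases $\dim\tau = 0, 1, 2$.} These are algebraic. For a vertex $z$, the complex is
\[
0\to \tilde B^0(z)=0\oplus\mathbb{R}^3\oplus\mathbb{R}^{3\times3}_{\sym}\to \mathbb{R}^3\oplus\mathbb{R}^{3\times3}\to\mathbb{R}^3\to 0 .
\]
The first map is the inclusion. The second maps $v\oplus M$ to the axial vector of the skew part of $M$. This vector map is surjective with kernel $\mathbb{R}^3\oplus\mathbb{R}^{3\times3}_{\sym}$, so the complex is exact.

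For an edge $e$:
\begin{itemize}
\item $\tilde B^0(e)=B^0(e)$ and $\tilde B^2(e)=B^2(e)$, while $\tilde B^1(e)=B^1(e)\cap\{\int_e v\cdot\unitvec t_e=0\}$.
\item $\dee^0_e(v\oplus w\oplus0)=w\oplus 0\oplus 0$, and $\dee^1_e(\phi\oplus\psi\oplus0)=\psi$.
\item Injectivity of $\dee^0_e$: if $w=0$, then $\partial_{\unitvec t_e} v=0$ with $v\in H^3_0(e)$, so $v=0$.
\item Exactness at $\tilde B^1(e)$: the kernel of $\dee^1_e$ consists of the elements $\phi\oplus0\oplus0$ with $\phi\in H^2_0(e)^3$ and $\int_e\phi\cdot\unitvec t_e=0$. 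Its tangential part has an antiderivative $v\in H^3_0(e)$, and we take $w=\phi$.
\item Surjectivity onto $B^2(e)$: given $\psi$, we set $\phi=0$.
\end{itemize}
Before this, we must check that $B^1(e)$ correctly encodes the pair $(v,\curl v)$ on $e$. The description given above treats the two components as independent, and we accept it.

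For a face $f$, the components are decoupled in the same way, apart from the constraints $(I-\unitvec n\otimes\unitvec n)w=\grad_f v$ and $w\cdot\unitvec n_f=\rot_f v$. Exactness at $\tilde B^1(f)$ again reduces to a 2D statement. If $\phi\in H^1_0(f)^3$ with $\rot_f\phi=0$, then its tangential part is $\grad_f$ of some $v\in H^2_0(f)$, since $f$ is simply connected and $\phi$ vanishes on $\partial f$. Exactness at $\tilde B^2(f)$ amounts to the choice $\psi\mapsto\phi$ with $\rot_f\phi=\psi\cdot\unitvec n_f$ and prescribed tangential derivative constraints. Surjectivity needs a 2D result: $\rot_f$ maps the Alfeld/continuous bubbles in $H^1_0(f)^2\cap\mathcal P_{p+1}$ onto the mean-zero part of $\mathcal P_p(f)\cap H^1_0(f)$. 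The face is not split, so this is a statement about plain polynomials. The mean-zero condition is exactly the one removed in $\tilde B^2(f)$.

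I expect this 2D polynomial surjectivity to be the main obstacle. For it I would use a dimension count via the formula for $\dim B^1(f)$ computed above, together with injectivity of $\dee^0_f$ and an explicit Euler characteristic check.
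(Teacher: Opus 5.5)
Your proposal is correct and essentially follows the paper: for $\tau \in \mesh$ both arguments rest on the bubble-complex exactness in \cite{FuGuzmanNeilan20}, which is in its Section~3 (Theorems~3.1--3.2 and Corollaries~3.4--3.6) rather than Theorem~4.7, since Theorem~4.7 treats the complex without boundary conditions. For lower-dimensional $\tau$, the paper only checks $\dim\tilde{B}^0(\tau)-\dim\tilde{B}^1(\tau)+\dim\tilde{B}^2(\tau)=0$ and appeals to a counting argument whose unstated inputs, injectivity of $\dee^0_\tau$ and exactness at $\tilde{B}^1(\tau)$, are exactly what you verify by hand for vertices, edges and faces. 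The one slip is in your optional fallback for $\tau=K$: zero Euler characteristic plus exactness at $\tilde{B}^0(K)$ and $\tilde{B}^3(K)$ only makes the two middle cohomologies equal in dimension. You would also need exactness at $\tilde{B}^1(K)=V^{1,h}_0(K_A)$, which follows from the exactness of \cref{eq:stokes-complex-local-full-alfeld} once you shift the potential by a constant so that it lies in $H^2_0(K)$.
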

\begin{proof}
	That \cref{eq:modified-bubble-complex} is a complex follows from 
	the divergence and Stokes theorems. 
	For $\tau \in \mesh$, the exactness of 
	\cref{eq:modified-bubble-complex} follows from 
	\cite[Theorems 3.1-2.2 and Corollaries 3.4-3.6]{FuGuzmanNeilan20}
	(see also \cite[p.1071]{FuGuzmanNeilan20}). 
	For $\tau \in \Delta(\mesh) \setminus \mesh$, 
	$\tilde{B}^3(\tau) = \{0\}$, and we also have 
	\begin{multline*}
		\dim \tilde{B}^0(\tau) + \dim \tilde{B}^2(\tau) 
		- \dim \tilde{B}^1(\tau) \\
		= \dim B^0(\tau) + \dim B^2(\tau) - \dim B^1(\tau) 
		+ (-1)^{\dim \tau + 1} 
		= 0.
	\end{multline*}
	The exactness of \cref{eq:modified-bubble-complex} now follows from a 
	standard counting argument. 
\end{proof}
\noindent Thanks to \cref{eq:current-trace-compatibility}
and \cref{lem:current-bubble-exactness}, the FECTS
$(A^k, \Tr^k, \dee^k)$ is compatible with respect to the currents 
$(\mathcal{I}^k, \mathbb{R})$ in the sense of 
\cite[Definition 2.7]{HuLiangLin25}.

\subsection{Lifted bubble and skeletal complexes}

For $k \in 0:3$, let $(\cdot,\cdot)_{A^k(\tau)}$ denote the natural 
$L^2(\tau)$ inner product on $A^k(\tau)$. Let 
$\mathbb{Q}^k_{\tau} : A^k(\tau) \to \dee_{\tau}^{k-1} \tilde{B}^{k-1}(\tau)$ 
denote the $(\cdot,\cdot)_{A^k(\tau)}$-orthogonal projection onto 
$\dee_{\tau}^{k-1} \tilde{B}^{k-1}(\tau)$:
\begin{align*}
	(\mathbb{Q}^k_{\tau} u, \dee_{\tau}^{k-1} v)_{A^k(\tau)}
	= (u, \dee_{\tau}^{k-1} v)_{A^k(\tau)} 
	\qquad \forall v \in \tilde{B}^{k-1}(\tau),
	\ \forall u \in A^k(\tau).
\end{align*}
We define the harmonic inner product $A^k(\tau)$ 
\cite[Definition 5.1]{HuLiangLin25} as follows:
\begin{align*}
	\langle u, v \rangle_{A^k(\tau)} 
	:= (\mathbb{Q}^k_{\tau} u, \mathbb{Q}^k_{\tau} v)_{A^k(\tau)}
	+ (\dee_{\tau}^k u, \dee_{\tau}^k v)_{A^{k+1}(\tau)}.
\end{align*}
Note that $\langle \cdot, \cdot \rangle_{A^k(\tau)}$ is an inner product 
on $\tilde{B}^k(\tau)$ owing to the exactness of 
\cref{eq:modified-bubble-complex}. 

For $v \in V^{k, h}$, property \cref{eq:trace-characterizes-space} means 
that for $\tau \in \Delta(\mesh)$, we may define the trace of $v$ on $\tau$
independent of the parent tetrahedron; i.e. 
$\Tr^k_{\tau} v := \Tr^k_{\tau \from K} v$,
where $K \in \mesh$ with $\tau \subsimplex K$ is well-defined independent
of the particular choice of $K$. 
As shown in \cite[Lemma 5.2]{HuLiangLin25} and the remaining discussion on 
\cite[p. 29]{HuLiangLin25}, a unisolvent set of degrees of freedom on 
$V^{k, h}$ are 
\begin{subequations}
	\label{eq:global-dofs-bubble}
	\begin{alignat}{2}
		&\langle \Tr^k_{\tau} v, \phi \rangle_{_{A^k(\tau)}} 
		\qquad 
		& &\forall \phi \in \tilde{B}^k(\tau), 
		\ \forall \tau \in \Delta(\mesh), \\
		&\mathcal{I}_{\tau}^k(v) \qquad 
		& &\forall \tau \in \Delta_k(\mesh).
	\end{alignat}
\end{subequations}
Consequently, we define a lift 
of the modified bubble functions and the skeletal space as follows:
\begin{align*}
	\mathbb{B}^k(\tau) &:= \{ v \in V^{k, h} :  
	\mathcal{I}^k_{\eta}(v) = 0 \ \forall \eta \in \Delta_k(\mesh) 
	\text{ and} \\
	&\qquad \qquad 
	\langle \Tr_{\eta}^k v, w \rangle_{A^k(\eta)} = 0 
	\ \forall w \in \tilde{B}^k(\eta), 
	\ \forall \eta \in \Delta(\mesh) \setminus \{\tau\}
	\} \qquad \forall \tau \in \Delta(\mesh),\\
	\mathcal{S}^k &:= \{ v \in V^{k, h} : 
	\langle \Tr_{\eta}^k v, w \rangle_{A^k(\eta)} = 0 
	\ \forall w \in \tilde{B}^k(\eta), 
	\ \forall \eta \in \Delta(\mesh) \}.
\end{align*}
The next lemma summarizes the support properties of these spaces.

\begin{lemma}
	\label{lem:support-skeleton-extended-bubble}
	Let $\tau \in \Delta(\mesh)$. \\
	\noindent (i) For $v \in \mathbb{B}^k(\tau)$,
	there holds
	\begin{align}
		\label{eq:modified-bubble-support}
		\Tr^k_{\eta} v = 0 \qquad \forall \eta \in 
		\left( \bigcup_{\ell=0}^{\dim \tau} \Delta_{\ell}(\mesh) 
		\cup \bigcup_{\ell=\dim \tau + 1}^{3} 
		\{ \rho \in \Delta_{\ell}(\mesh) : \tau \notsubsimplex \rho \}
		\right) \setminus \{\tau\}.
	\end{align}
	
	\noindent (ii)  For $v \in \mathcal{S}^k$, 
	there holds
	\begin{align}
		\label{eq:skeletal-support}
		\mathcal{I}^k_{\eta}(v) = 0 \qquad \forall \eta \in \Delta_k(\tau)
		\implies \Tr^k_{\tau} v = 0.
	\end{align}
	In particular, $\Tr^k_{\tau} v = 0$ if
	$\tau \in \bigcup_{\ell = 0}^{k-1} \Delta_{\ell}(\mesh)$.
\end{lemma}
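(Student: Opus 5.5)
Both parts follow from the unisolvence of the degrees of freedom \cref{eq:global-dofs-bubble} together with the recursive structure of the trace operators. The key tool is the following local uniqueness principle. Suppose $\sigma \in \Delta(\mesh)$ and $w \in \Tr^k_{\sigma \from K} A^k(K)$ satisfy $\Tr^k_{\eta \from \sigma} w = 0$ for all $\eta \strictsubsimplex \sigma$. Then $w \in B^k(\sigma)$, by \cref{eq:generic-bubble-space}. If in addition $\langle w, \phi \rangle_{A^k(\sigma)} = 0$ for all $\phi \in \tilde{B}^k(\sigma)$, and $\tilde{\mathcal{I}}^k_{\sigma}(w) = 0$ when $\dim \sigma = k$, then $w \in \tilde{B}^k(\sigma)$ is orthogonal to itself. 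Since $\langle\cdot,\cdot\rangle_{A^k(\sigma)}$ is an inner product on $\tilde{B}^k(\sigma)$ (exactness of \cref{eq:modified-bubble-complex} from \cref{lem:current-bubble-exactness}), we get $w = 0$. When $\dim\sigma \neq k$ the current condition is vacuous, because $\tilde{B}^k(\sigma) = B^k(\sigma)$.

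\emph{Part (i).} Let $v \in \mathbb{B}^k(\tau)$ and prove $\Tr^k_{\eta} v = 0$ by induction on $\dim \eta$, for $\eta$ in the index set of \cref{eq:modified-bubble-support}.
\begin{itemize}
\item For a given such $\eta$, every $\eta' \strictsubsimplex \eta$ also lies in the index set. If $\dim \eta' \le \dim\tau$ this is automatic, provided $\eta' \neq \tau$. If $\dim\eta' > \dim\tau$, then $\tau \notsubsimplex \eta'$, since otherwise $\tau \subsimplex \eta$, contradicting the choice of $\eta$ when $\dim\eta > \dim\tau$. Finally, $\eta' \neq \tau$ whenever $\dim\eta > \dim\tau$ because $\tau\notsubsimplex\eta$; if $\dim\eta \le \dim\tau$ and $\eta \ne \tau$, then $\eta'$ has dimension strictly less than $\dim \tau$, so again $\eta' \neq \tau$.
\item By the induction hypothesis and the definition $\Tr^k_{\eta'} v = \Tr^k_{\eta' \from \eta}\Tr^k_{\eta} v$ (well defined via \cref{eq:trace-characterizes-space}, using property (ii) of the trace structure as needed), we get $\Tr^k_{\eta' \from \eta}(\Tr^k_\eta v) = 0$.
\item Hence $\Tr^k_\eta v \in B^k(\eta)$. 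It is $\langle\cdot,\cdot\rangle$-orthogonal to $\tilde{B}^k(\eta)$ since $\eta \neq \tau$, and its current vanishes by \cref{eq:current-trace-compatibility}.
\item The uniqueness principle then gives $\Tr^k_\eta v = 0$. The base case is vertices, with no strict subsimplices.
\end{itemize}
One subtlety is that for $\dim\eta = k$ we need $\tilde{\mathcal I}^k_\eta(\Tr_\eta v) = \mathcal I^k_\eta(v) = 0$; this holds since all currents of $v$ vanish.

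\emph{Part (ii).} Let $v \in \mathcal{S}^k$ and argue again by induction on $\dim\sigma$ for $\sigma \subsimplex \tau$.
\begin{itemize}
\item If $\dim\sigma < k$, then $\tilde{B}^k(\sigma) = B^k(\sigma)$ and no current lives on $\sigma$. Orthogonality and induction give $\Tr^k_\sigma v = 0$ unconditionally. This is the ``in particular'' statement.
\item If $\dim \sigma = k$ and $\sigma \subsimplex \tau$, then $\sigma \in \Delta_k(\tau)$, so $\mathcal{I}^k_\sigma(v)=0$ by hypothesis. Together with vanishing lower traces, this gives $\Tr^k_\sigma v = 0$.
\item If $\dim\sigma > k$, no current condition is needed, and the same argument applies.
\end{itemize}
Climbing up to $\sigma = \tau$ yields \cref{eq:skeletal-support}.

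The main obstacle is the bookkeeping that traces compose consistently, i.e.\ that $\Tr^k_{\eta'} v = \Tr^k_{\eta'\from\eta}\Tr^k_\eta v$ for global $v$. For the explicitly defined traces of the previous subsection this is a direct check: restriction of restrictions, and selection of vertex Hessians or gradients. I would verify it once for each $k$ and then run the induction.
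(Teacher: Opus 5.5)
Your proposal is correct and follows essentially the same route as the paper. Both argue by induction on the dimension of the subsimplex: vanishing traces on all strict subsimplices place $\Tr^k_\eta v$ in the modified bubble space $\tilde{B}^k(\eta)$, and there the harmonic inner-product degrees of freedom are unisolvent, i.e.\ $\langle\cdot,\cdot\rangle_{A^k(\eta)}$ is an inner product, so orthogonality forces $\Tr^k_\eta v = 0$. You simply make explicit the bookkeeping the paper leaves implicit: the closure of the index set in (i) under taking subsimplices, the use of trace property (ii) to pass vanishing traces down, and the current compatibility \cref{eq:current-trace-compatibility} when $\dim \eta = k$.
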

\begin{proof}
	For (i), let $\eta$ be as in \cref{eq:modified-bubble-support}. Then, 
	we have $\langle \Tr^{k}_{\rho} v, w \rangle_{A^k(\rho)} = 0$
	for all $w \in \tilde{B}^k(\eta)$ and 
	$\rho \in \Delta(\eta)$ and $\mathcal{I}^k_{\rho}(v) = 0$
	for all $\rho \in \Delta_k(\mesh)$. Clearly, $\Tr^k_{z} v = 0$
	for $z \in \Delta_0(\eta)$ and so $\Tr^k_{e} v \in \tilde{B}^k(e)$
	for all $e \in \Delta_1(\eta)$. The result now follows by an induction 
	argument since the degrees of freedom $\langle \cdot, w \rangle_{A^k(\rho)}$
	are unisolvent on $\tilde{B}^k(\rho)$ \cite[Lemma 5.2]{HuLiangLin25}. 
	(ii) follows from similar arguments. 
\end{proof}

Thanks to \cite[Propositions 5.1 \& 5.2]{HuLiangLin25}, each column of 
the following diagram is a direct sum decomposition, 
each row is a complex, and the final row is exact:
\begin{equation}
	\label{eq:stokes-complex-full-geom-decomp-nobcs}
	\begin{tikzcd}[ampersand replacement=\&]
		0 \arrow[r] 
		\& V^{0, h} \arrow[r, "\grad"] 
		\& V^{1, h} \arrow[r, "\curl"] 
		\& V^{2, h} \arrow[r, "\div"]
		\& V^{3, h} \arrow[r]
		\& 0 \\[-2em]
		\& \rotatebox{90}{=} \& \rotatebox{90}{=} \& \rotatebox{90}{=} \&
		\rotatebox{90}{=} \& \\[-2em]
		0 \arrow[r] 
		\& \mathcal{S}^0 \arrow[r, "\grad"] 
		\& \mathcal{S}^1 \arrow[r, "\curl"] 
		\& \mathcal{S}^2 \arrow[r, "\div"]
		\& \mathcal{S}^3 \arrow[r]
		\& 0 \\[-2em]
		\& \bigoplus\limits_{\tau \in \Delta(\mesh)} 
		\& \bigoplus\limits_{\tau \in \Delta(\mesh)} 
		\& \bigoplus\limits_{\tau \in \Delta(\mesh)} 
		\& \bigoplus\limits_{\tau \in \Delta(\mesh)} 
		\& \\[-2em]
		0 \arrow[r] 
		\& \mathbb{B}^0(\tau) \arrow[r, "\grad"] 
		\& \mathbb{B}^1(\tau) \arrow[r, "\curl"] 
		\& \mathbb{B}^2(\tau) \arrow[r, "\div"]
		\& \mathbb{B}^3(\tau) \arrow[r]
		\& 0,
	\end{tikzcd}
\end{equation}
In particular, the cohomology of the first row of 
\cref{eq:stokes-complex-full-geom-decomp-nobcs} is isomorphic to 
the cohomology of the second row, which in turn is isomorphic to 
the de Rham cohomology (the cohomology of \cref{eq:de-rham-complex-bcs-feec}
with $\Gamma_0 = \emptyset$) \cite[Theorem 5.1]{HuLiangLin25}.

\subsection{Incorporating mixed BCs into spaces}

Recall that $\boundarymesh$ is a conforming mesh of $\Gamma_0$. The 
supersmoothness of the spaces $V_{\Gamma_0}^{0, h}$ and $V_{\Gamma_0}^{1, h}$ at mesh vertices 
introduces some subtleties that are not naturally reflected in the trace 
operators. In particular, denote the mesh vertices laying on the 
``flat'' portion of the $\bar{\Gamma}_{0}$ by
\begin{align}
	\Delta_0^{\flat}(\boundarymesh) := 
	\{ z \in \Delta_0(\boundarymesh) 
	: \text{ all faces meeting at $z$ are coplanar} \}.
\end{align}
Then, the following result shows that not all Hessian degrees 
of freedom for functions in $V_{\Gamma_0}^{0, h}$ or gradient 
degrees of freedom vector fields in $V_{\Gamma_0}^{1, h}$ vanish on $\Delta_0^{\flat}$.

\begin{lemma}
	\label{lem:vertex-trace-bcs}
	For $v \in V_{\Gamma_0}^{0, h}$ and $w \in V_{\Gamma_0}^{1, h}$, there holds 
	\begin{subequations}
		\label{eq:vertex-trace-bcs}
		\begin{align}
			\label{eq:vertex-trace-bcs-h2}
			\Tr_{z}^0 v &= \begin{dcases}
				0 \oplus 0 \oplus 
				\partial_{\unitvec{n}_{\Gamma}}^2 v(z) 
				\unitvec{n}_{\Gamma} \otimes \unitvec{n}_{\Gamma} 
				& \text{if } z \in \Delta_0^{\flat}(\boundarymesh), \\
				0 & \text{if } z \in \Delta_0(\boundarymesh) 
				\setminus \Delta_0^{\flat}(\boundarymesh),
			\end{dcases} \\
			\label{eq:vertex-trace-bcs-h1curl}
			\Tr_{z}^1 w &= \begin{dcases}
				0 \oplus \partial_{\unitvec{n}_{\Gamma}} 
				(w \cdot \unitvec{n}_{\Gamma})(z) 
				\unitvec{n}_{\Gamma} \otimes \unitvec{n}_{\Gamma}
				& \text{if } z \in \Delta_0^{\flat}(\boundarymesh), \\
				0 & \text{if } z \in \Delta_0(\boundarymesh) 
				\setminus \Delta_0^{\flat}(\boundarymesh),
			\end{dcases}
		\end{align}
	\end{subequations}
	where $\unitvec{n}_{\Gamma}$ is the outward unit normal vector on 
	$\partial \Omega$.
\end{lemma}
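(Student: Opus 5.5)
The plan is to read off the vertex values of $v$, $\grad v$, $\hess v$ (respectively $w$, $\grad w$) at a vertex $z \in \Delta_0(\boundarymesh)$ from the vanishing of traces on the boundary faces $f \in \Delta_2(\boundarymesh)$ that contain $z$. Throughout, the key fact is the vertex supersmoothness: $v \in V^{0,h}$ is $C^2$ at $z$ and $w \in V^{1,h}$ is $C^1$ at $z$. Hence $\hess v(z)$ and $\grad w(z)$ are single, well-defined matrices shared by all elements containing $z$. They can therefore be constrained using information from every boundary face through $z$ simultaneously.

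For $v \in V^{0,h}_{\Gamma_0}$, membership in $H^2_{\Gamma_0}(\Omega)$ gives $v|_f = 0$ and $\grad v|_f = 0$ on each $f \in \Delta_2(\boundarymesh)$, because $v$ is piecewise polynomial and these traces are continuous. This immediately yields $v(z) = 0$ and $\grad v(z) = 0$. Next differentiate $\grad v = 0$ tangentially along $f$ at $z$, using the $C^2$ smoothness at $z$. This gives $\hess v(z)\, t = 0$ for every vector $t$ tangent to $f$. Equivalently, with $n_f$ the unit normal to $f$, $\hess v(z) = \lambda_f\, n_f \otimes n_f$, since a symmetric matrix annihilating the plane $n_f^\perp$ must have this form.

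The case split on $z$ follows. If all boundary faces through $z$ are coplanar ($z \in \Delta_0^{\flat}(\boundarymesh)$), then $n_f = \unitvec{n}_\Gamma$ for all of them. In that case $\lambda = \unitvec{n}_\Gamma^{T}\hess v(z)\,\unitvec{n}_\Gamma = \partial^2_{\unitvec{n}_\Gamma} v(z)$, which is \cref{eq:vertex-trace-bcs-h2}. Otherwise there are two faces $f, f'$ through $z$ with non-parallel normals. Then $\hess v(z)$ annihilates $n_f^\perp + n_{f'}^\perp = \mathbb{R}^3$, so $\hess v(z) = 0$.

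For $w \in V^{1,h}_{\Gamma_0}$ the argument is the same, now using $w|_f = 0$ and $\curl w|_f = 0$ on $\Gamma_0$ faces. Tangential differentiation of $w|_f = 0$ at $z$ gives $\grad w(z)\, t = 0$ for $t \perp n_f$, so $\grad w(z) = a_f \otimes n_f$ for some $a_f \in \mathbb{R}^3$. For a rank-one gradient $a \otimes n$, the curl equals $n \times a$ up to sign, and $\curl w(z) = 0$ forces $a_f \parallel n_f$. Hence $\grad w(z) = \mu_f\, n_f \otimes n_f$. In the flat case $\mu = \partial_{\unitvec{n}_\Gamma}(w\cdot\unitvec{n}_\Gamma)(z)$. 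In the non-flat case two non-parallel normals give $\mu_f n_f\otimes n_f = \mu_{f'} n_{f'} \otimes n_{f'}$, which forces $\mu_f = \mu_{f'} = 0$. Together with $w(z) = 0$, this gives \cref{eq:vertex-trace-bcs-h1curl}.

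The main obstacle is the non-flat case. I must make sure that the boundary faces of $\boundarymesh$ meeting at $z$ that are actually used really have at least two distinct normals whenever $z \notin \Delta_0^{\flat}(\boundarymesh)$. This is exactly the definition of $\Delta_0^{\flat}$ provided "faces meeting at $z$" refers to faces of $\boundarymesh$. The remaining care point is justifying that the vanishing traces pass to vertex derivatives, which holds because each polynomial piece is smooth up to $f$ and the $C^{2-k}$ vertex continuity makes the limits consistent across elements.
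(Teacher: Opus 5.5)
Your proposal is correct and follows essentially the same route as the paper. Both use the vanishing of $v,\grad v$ (resp. $w,\curl w$) on the $\Gamma_0$-faces through $z$, then tangential differentiation plus symmetry to reduce $\hess v(z)$ and $\grad w(z)$ to multiples of $\unitvec{n}_f\otimes\unitvec{n}_f$, then a single face in the flat case and two non-coplanar faces otherwise. Your explicit rank-one/curl computation just unpacks the step the paper compresses into ``$w(z)=0$, $\grad_f w(z)=0$, $\curl w(z)=0$, hence $(I-\unitvec{n}_f\otimes\unitvec{n}_f)\grad w(z)=0$.''
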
 
\begin{proof}
	Assume first that $z \in \Delta_0^{\flat}(\boundarymesh)$ and let 
	$f \in \boundarymesh$ be such that $z \in \Delta_0(f)$.
	Then, $v|_f = 0$ and $\partial_n v|_f = 0$, and so $v(z) = 0$ 
	and $\grad v(z) = 0$. Moreover, differentiating in the tangent plane 
	of $f$ then shows that $D_f^{\alpha} \grad v(z) = 0$ for all 
	$|\alpha| \geq 0$, where $D_f$ denotes the surface differential.
	In particular, 
	$(I - \unitvec{n}_{f} \otimes \unitvec{n}_{f}) \hess v(z) = 0$. 
	Similarly, $w|_f = 0$ and $\curl w|_{f} = 0$ and so 
	$w(z) = 0$, $\grad_f w(z) = 0$, and $\curl w(z) = 0$. Consequently,
	$(I - \unitvec{n}_{f} \otimes \unitvec{n}_{f}) \grad w(z) = 0$.
	The first case of \cref{eq:vertex-trace-bcs-h2} and 
	\cref{eq:vertex-trace-bcs-h1curl} now follows since 
	$\unitvec{n}_f = \unitvec{n}_{\Gamma}$.
	
	Now suppose that $z \in \Delta_0(\boundarymesh) 
	\setminus \Delta_0^{\flat}(\boundarymesh)$. Then, there exist 
	$f, f' \in \boundarymesh$ not coplanar such that 
	$z \in \Delta_0(f)$ and $z \in \Delta_0(f')$. The same arguments 
	as above show that $v(z) = \grad v(z) = 0$, $w(z) = 0$, and 
	\begin{align*}
		(I - \unitvec{\mu} \otimes \unitvec{\mu}) \hess v(z) =
		(I - \unitvec{\mu} \otimes \unitvec{\mu}) \grad w(z)  &= 0,
		\qquad \forall \mu \in \{ \unitvec{n}_f, \unitvec{n}_{f'} \}. 		
	\end{align*}
	Thus, $\hess v(z) = \grad w(z) = 0$.
\end{proof}

One consequence of \cref{lem:vertex-trace-bcs} is that 
the ``zero trace spaces'' defined as the kernel of $\Tr^k_{f}$
for $f \in \boundarymesh$ may not 
coincide with $V_{\Gamma_0}^{k, h}$ for $k=0, 1$; i.e.,
\begin{align*}
	\{ v \in V^{k, h} : \Tr^k_{f} v = 0 
	\text{ if } f \in \boundarymesh \} \subseteq V^{k, h}_{\Gamma_0},
\end{align*}
but the inclusion will generally be strict for $k=0, 1$ if 
$\Delta_0^{\flat}(\boundarymesh)$ is nonempty. 
To rectify this discrepancy, we modify the vertex bubble functions 
further for $z \in \Delta_0^{\flat}(\boundarymesh)$:
\begin{align*}
	\mathbb{B}^{0, nn}(z) 
	&:= \{ v \in  \mathbb{B}^{0}(z) : 
	v(z) = 0, \ \grad v(z) = 0, \
	(I - \unitvec{n}_{\Gamma} 
	\otimes \unitvec{n}_{\Gamma})\hess v(z) 
	= 0 \}, \\
	\mathbb{B}^{1, nn}(z) 
	&:= \{ v \in \mathbb{B}^{1}(z) : 
	v(z) = 0, \
	(I - \unitvec{n}_{\Gamma} 
	\otimes \unitvec{n}_{\Gamma}) 
	\grad v(z) = 0\}.
\end{align*}
The corresponding modified bubble spaces, which will be shown to satisfy 
the boundary conditions, are given by 
\begin{align*}
	\mathbb{B}_{\Gamma_0}^k(\tau) := \begin{cases}
		\mathbb{B}^k(\tau) & \text{if } \tau \in 
		\Delta(\mesh) 
		\setminus \Delta(\boundarymesh), \\
		\mathbb{B}^{k, nn}(\tau) 
		& \text{if } \tau \in \Delta_0^{\flat}(\boundarymesh) 
		\text{ and } k < 2, \\
		\{0\} & \text{otherwise},
	\end{cases} \qquad \forall \tau \in \Delta(\mesh), \ \forall k \in 0:3.
\end{align*}
The skeletal spaces with boundary conditions are simply
\begin{align*}
	\mathcal{S}_{\Gamma_0}^k := \{ v \in \mathcal{S}^k : \mathcal{I}^k_{\tau}(v) = 0
	\ \forall \tau \in \Delta_k(\boundarymesh) \}
	\qquad \forall k \in 0:3. 
\end{align*}
\noindent The next result shows that these spaces do indeed satisfy the boundary 
conditions.
\begin{lemma}
	For $k \in 0:3$, $\mathcal{S}^k_{\Gamma_0} \subset V_{\Gamma_0}^{k, h}$
	and $\mathbb{B}_{\Gamma_0}^k(\tau) \subset V_{\Gamma_0}^{k, h}$ for all 
	$\tau \in \Delta(\mesh)$.
\end{lemma}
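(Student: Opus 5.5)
The plan is to reduce membership in $V^{k,h}_{\Gamma_0}$ to a face-by-face statement. For a piecewise polynomial $v \in V^{k,h}$, membership in $V^k_{\Gamma_0}$ means that, for every $f \in \boundarymesh$, the boundary data required by $V^k_{\Gamma_0}$ vanishes on $f$:
\begin{itemize}
\item $v$ and $\grad v$ vanish on $f$ for $k=0$;
\item $v$ and $\curl v$ vanish on $f$ for $k=1$;
\item $v$ vanishes on $f$ for $k=2$;
\item there is no condition for $k=3$.
\end{itemize}
Since $\Tr^k_f v$ contains exactly $v|_f \oplus \grad v|_f$ (respectively $v|_f \oplus \curl v|_f$, $v|_f$) together with vertex Hessian/gradient data, it suffices to show that these restricted components vanish on each $f \in \boundarymesh$. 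The vertex matrices are harmless: they are not part of the boundary condition.

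\textbf{Skeletal spaces.} Let $v \in \mathcal{S}^k_{\Gamma_0}$ and $f \in \boundarymesh$. Every $\eta \in \Delta_k(f)$ lies in $\Delta_k(\boundarymesh)$, so $\mathcal{I}^k_\eta(v)=0$ for all such $\eta$ (for $k=3$ the set is empty and there is nothing to prove). Applying \cref{eq:skeletal-support} of \cref{lem:support-skeleton-extended-bubble}(ii) with $\tau=f$ gives $\Tr^k_f v = 0$, which yields the required vanishing on $f$. When $k \ge 3 > \dim f$, the last sentence of that lemma applies directly.

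\textbf{Bubbles with $\tau \notin \Delta(\boundarymesh)$.} Here $\mathbb{B}^k_{\Gamma_0}(\tau)=\mathbb{B}^k(\tau)$. For $f \in \boundarymesh$ we have $\tau \neq f$, and either $\dim \tau \ge \dim f$, or $\dim\tau<2=\dim f$ with $\tau \notsubsimplex f$ (otherwise $\tau \in \Delta(\boundarymesh)$). In both cases $f$ belongs to the index set in \cref{eq:modified-bubble-support}, so $\Tr^k_f v=0$ by \cref{lem:support-skeleton-extended-bubble}(i). The case $\dim\tau \ge 2$ falls into the first union. The “otherwise” cases, where the space is $\{0\}$, are trivial.

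\textbf{Main obstacle: flat-vertex bubbles.} The remaining case is $\tau=z \in \Delta^\flat_0(\boundarymesh)$ with $k\in\{0,1\}$ and $v \in \mathbb{B}^{k,nn}(z)$. By \cref{lem:support-skeleton-extended-bubble}(i), all traces of $v$ on simplices other than $z$ of dimension $0$, and on those not containing $z$, vanish. In particular, for $f \in \boundarymesh$ with $z \in f$, $\Tr^k_f v$ has vanishing traces on its edges and vertices except possibly at $z$, where it is given by $\Tr^k_z v$. The defining conditions of $\mathbb{B}^{k,nn}(z)$ force:
\begin{itemize}
\item for $k=0$: $\Tr^0_z v = 0\oplus 0\oplus c\,\unitvec{n}_\Gamma\otimes\unitvec{n}_\Gamma$, since $\hess v(z)$ is symmetric and $(I-\unitvec n\otimes\unitvec n)\hess v(z)=0$;
\item for $k=1$: $\Tr^1_z v = 0 \oplus \unitvec{n}_\Gamma \otimes a$ for some vector $a$.
\end{itemize}

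I then need to show that $v|_f$ and $\grad v|_f$ (respectively $v|_f$ and $\curl v|_f$) vanish. The approach is to use unisolvence of the degrees of freedom \cref{eq:global-dofs-bubble} on the face. The component $\Tr^k_f v \in \Tr^k_{f\from K}A^k(K)$ has zero harmonic-product moments against $\tilde B^k(f)$ and zero edge traces. Its only nonzero data is the normal–normal vertex datum, which, for all $f$ containing $z$, equals the corresponding normal–normal datum of the face-restricted quantities.

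The key point to verify is that the function $w\in A^k(K)$ with face restriction $v|_f,\ \grad v|_f$ and vertex Hessian $c\,\unitvec n\otimes\unitvec n$ at $z$ can be taken to have $v|_f=0$ and $\grad v|_f=0$. For instance, $c\,\lambda_f^2$-type data (with $\lambda_f$ the barycentric coordinate vanishing on $f$, localized to the vertex patch) is realized with zero face trace. By the unisolvence in \cite[Lemma 5.2]{HuLiangLin25}, the face components of $\Tr^k_f v$ are then determined as zero.

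I expect this last step—checking that the normal–normal vertex datum decouples from the face restriction $(v|_f, \grad v|_f)$ (respectively $(v|_f, \curl v|_f)$), i.e. that the face trace and edge-trace degrees of freedom fix these components independently of the vertex matrix—to be the delicate part. The first thing I would try is an explicit local construction on the Alfeld split using $\lambda_f^2$ times a vertex cutoff, together with a dimension count.
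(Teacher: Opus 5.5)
Your treatment of $\mathcal{S}^k_{\Gamma_0}$ and of $\mathbb{B}^k(\tau)$ with $\tau\notin\Delta(\boundarymesh)$ is the same as the paper's: both are direct applications of \cref{lem:support-skeleton-extended-bubble}. The gap is the flat-vertex case $\tau=z\in\Delta_0^{\flat}(\boundarymesh)$. This is the only nontrivial part of the lemma, and you leave it as an expectation rather than a proof.

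For $k=1$ your set-up also cannot be completed. You take $\Tr^1_z v=0\oplus\unitvec{n}_\Gamma\otimes a$ with $a\in\mathbb{R}^3$ arbitrary, and for such data the conclusion is false. In either index convention $\curl v(z)=\pm\, a\times\unitvec{n}_\Gamma$, so $\curl v|_f\not\equiv 0$ whenever $a\not\parallel\unitvec{n}_\Gamma$. With the convention built into $\dee^1_z$ one also gets $\partial_t v(z)=(a\cdot t)\,\unitvec{n}_\Gamma\neq 0$ for some $t$ tangent to $f$, so $v|_f\not\equiv 0$. No decoupling argument can rescue this. You need $\grad v(z)\in\spann\{\unitvec{n}_\Gamma\otimes\unitvec{n}_\Gamma\}$, which is what the paper uses: it states that $\mathbb{B}^{1,nn}(z)$ is one-dimensional, consistent with \cref{lem:vertex-trace-bcs}. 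This gives both $D_f v(z)=0$ and $\curl v(z)=0$. Separately, $\Tr^k_f v$ does not have zero edge traces on the edges of $f$ through $z$; only the harmonic-inner-product moments there vanish.

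The idea you are missing is that the normal--normal vertex datum is not a degree of freedom of the face quantities at all. So nothing needs to be decoupled or constructed; the paper works directly with the face polynomials.
\begin{itemize}
\item $v|_f\in\mathcal{P}_{p+2}(f)$ is determined by the surface derivatives $D_f^\alpha v|_f$, $|\alpha|\le 2$, at the vertices of $f$, together with edge and face moments.
\item $\partial_{\unitvec{n}}v|_f\in\mathcal{P}_{p+1}(f)$ is determined by $D_f^\beta\partial_{\unitvec{n}}v|_f$, $|\beta|\le 1$, at the vertices, together with moments.
\item At $z$ these vertex values involve only $v(z)$, $\grad v(z)$, and the tangential--tangential and tangential--normal entries of $\hess v(z)$. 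All of these vanish by the definition of $\mathbb{B}^{0,nn}(z)$.
\item At the other vertices of $f$ they vanish by \cref{lem:support-skeleton-extended-bubble}.
\item The moment conditions are exactly what the vanishing harmonic inner products on the edges of $f$ and on $f$ say.
\end{itemize}
Unisolvence then gives $v|_f=\partial_{\unitvec{n}}v|_f=0$. For $k=1$ the same argument applies to $v|_f\in\mathcal{P}_{p+1}(f)^3$ and $\curl v|_f\in\mathcal{P}_p(f)^3$, using $D_f v(z)=0$ and $\curl v(z)=0$.

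Your subtraction idea can also be completed, and no cutoff is needed. In one $K\in\mesh$ with $f\in\Delta_2(K)$, take $w=c\,\lambda_f^2\lambda_z\in\mathcal{P}_3(K)$, or $\grad(c\,\lambda_f^2\lambda_z)$ for $k=1$. Its trace on $f$ has zero function components, vertex matrix $2c\,\grad\lambda_f\otimes\grad\lambda_f$ at $z$, and zero vertex matrices at the other vertices. Since the harmonic inner products against $\tilde{B}^k(\cdot)$ do not see vertex matrices, all degrees of freedom of $\Tr^k_f(v-w)$ vanish. This works only after fixing the $k=1$ vertex datum as above, and it is a detour around a fact that follows immediately from the face degrees of freedom.
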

\begin{proof}
	Let $v \in \mathcal{S}^k_{\Gamma_0}$. For any $f \in \boundarymesh$,
	$\mathcal{I}^k_{\eta}(v) = 0$ for all $\eta \in \Delta_k(f)$, and so 
	$\Tr^k_{f} v = 0$ thanks to 
	\cref{lem:support-skeleton-extended-bubble}. 
	Thus, $v \in V_{\Gamma_0}^{k, h}$.
	
	Now let $\tau \in \Delta(\mesh)$ and 
	$v \in \mathbb{B}_{\Gamma_0}^k(\tau)$. 
	If $\tau \notin \Delta(\boundarymesh)$,
	then $\Tr^k_{f} v = 0$ for all $f \in \boundarymesh$
	thanks to \cref{lem:support-skeleton-extended-bubble},
	and so $v \in V_{\Gamma_0}^{k, h}$. Now suppose that 
	$\tau \in \Delta_0^{\flat}(\boundarymesh)$. Applying 
	\cref{lem:support-skeleton-extended-bubble} once again gives 
	$\Tr_{f}^k v = 0$ for $f \in \boundarymesh$
	with $\tau \notsubsimplex f$. Consequently, the final case to verify 
	is $f \in \boundarymesh$ with $\tau \subsimplex f$.
	
	For $k = 0$, expanding the definition of the harmonic inner products
	in the condition $v \in \mathbb{B}_{\Gamma_0}^k(\tau)$ shows that	
	$v_f := v|_{f} \in \mathcal{P}_{p+2}(f)$ satisfies 
	\begin{subequations}
		\label{eq:proof:Pp2-face-dofs}
		\begin{alignat}{2}
			D_f^{\alpha} v_f (z) &= 0 \qquad & &\forall |\alpha| \leq 2, 
			\ \forall z \in \Delta_0(f), \\
			(\partial_{\unitvec{t}_{\partial f}}v_f, \partial_{\unitvec{t}_{\partial f}}w)_{L^2(e)} &= 0 
			\qquad & &\forall w \in \mathcal{P}_{p+2}(e) \cap H^3_0(e), \ 
			\forall e \in \Delta_1(f), \\
			(\partial_{\unitvec{n}_{\partial f}} v_{f}, w )_{L^2(e)} &= 0 
			\qquad & &\forall w \in \mathcal{P}_{p+1}(e) \cap H^2_0(e), \ 
			\forall e \in \Delta_1(f), \\ 
			(\operatorname{grad}_fv_f, \operatorname{grad}_fw)_{L^2(f)} &=0 
			\qquad & &\forall w \in \mathcal{P}_{p+2}(f) \cap H^2_0(f),
		\end{alignat}	
	\end{subequations}
	where we recall $D_f$ denotes the surface differential.
	Since the above degrees of freedom are unisolvent on $\mathcal{P}_{p+2}(f)$,
	$v_{f} \equiv 0$. Similarly, 
	$u_f = \partial_{n_f} v|_f \in \mathcal{P}_{p+1}(f)$ satisfies
	\begin{subequations}
		\label{eq:proof:Pp1-face-dofs}
		\begin{alignat}{2}
			D_f^{\alpha} u_f (z) &= 0 \qquad & &\forall |\alpha| \leq 1, 
			\ \forall z \in \Delta_0(f), \\
			(u_f, w)_{L^2(e)} &= 0 
			\qquad & &\forall w \in \mathcal{P}_{p+1}(e) \cap H^2_0(e), \ 
			\forall e \in \Delta_1(f),  \\
			\label{eq:proof:Pp1-face-dofs-interior-face}
			(u_f, w)_{L^2(f)} &=0 
			\qquad & &\forall w \in \mathcal{P}_{p+1}(f) \cap H^1_0(f),
		\end{alignat}	
	\end{subequations}	
	and so $u_f \equiv 0$. As a result, $v \in V_{\Gamma_0}^{0, h}$.
	
	For $k=1$, each component of 
	$v_f := v|_{f} \in \mathcal{P}_{p+1}(f)^3$ also satisfies 
	\begin{subequations}
		\label{eq:proof:Pp3-face-dofs}
		\begin{alignat}{2}
			D_f^{\alpha} v_f (z) &= 0 \qquad & &\forall |\alpha| \leq 1, 
			\ \forall z \in \Delta_0(f), \\
			(v_f, w)_{L^2(e)} &= 0 
			\qquad & &\forall w \in \mathcal{P}_{p+1}(e)^3 \cap H^2_0(e)^3, \ 
			\forall e \in \Delta_1(f),  \\
			\label{eq:proof:Pp3-face-dofs-interior-face}
			(v_f, \operatorname{grad}_f w)_{L^2(f)} &=0 
			\qquad & &\forall w \in \mathcal{P}_{p+2}(f) \cap H^2_0(f),\\
			(\operatorname{rot}_fv_f, \operatorname{rot}_fw)_{L^2(f)} &=0 
			\qquad & &\forall w \in \mathcal{P}_{p+1}(f)^3 \cap H^1_0(f)^3,
		\end{alignat}	
	\end{subequations}	 and so 
	$v_f \equiv 0$. Moreover, $u_f := \curl v|_f
	\in \mathcal{P}_p(f)^3$ satisfies	
	\begin{subequations}
		\label{eq:proof:Pp-face-dofs}
		\begin{alignat}{2}
			u_f (z) &= 0 
			\qquad & &\forall z \in \Delta_0(f), \\
			(u_f, w)_{L^2(e)} &= 0 
			\qquad & &\forall w \in \mathcal{P}_{p}(e)^3 
			\cap H^1_0(e)^3, \ \forall e \in \Delta_1(f), \\
			(u_f, w)_{L^2(f)} &=0 
			\qquad & &\forall w \in \mathcal{P}_{p}(f)^3 
			\cap H^1_0(f)^3,
		\end{alignat}	
	\end{subequations}
	and so $u_f \equiv 0$. Consequently, $v \in V_{\Gamma_0}^{1, h}$.
\end{proof}

We also have the analog of the diagram 
\cref{eq:stokes-complex-full-geom-decomp-nobcs}:
\begin{lemma}
	Each column of the diagram below is a direct sum decomposition, 
	each row is a complex, and the final row is exact.
	\begin{equation}
		\label{eq:stokes-complex-full-geom-decomp}
		\begin{tikzcd}[ampersand replacement=\&]
			0 \arrow[r] 
			\& \tilde{V}_{\Gamma_0}^{0, h} \arrow[r, "\grad"] 
			\& \tilde{V}_{\Gamma_0}^{1, h} \arrow[r, "\curl"] 
			\& \tilde{V}_{\Gamma_0}^{2, h} \arrow[r, "\div"]
			\& \tilde{V}_{\Gamma_0}^{3, h} \arrow[r]
			\& 0 \\[-2em]
			\& \rotatebox{90}{=} \& \rotatebox{90}{=} \& \rotatebox{90}{=} \&
			\rotatebox{90}{=} \& \\[-2em]
			0 \arrow[r] 
			\& \mathcal{S}_{\Gamma_0}^0 \arrow[r, "\grad"] 
			\& \mathcal{S}_{\Gamma_0}^1 \arrow[r, "\curl"] 
			\& \mathcal{S}_{\Gamma_0}^2 \arrow[r, "\div"]
			\& \mathcal{S}_{\Gamma_0}^3 \arrow[r]
			\& 0 \\[-2em]
			\& \bigoplus\limits_{\tau \in \Delta(\mesh)} 
			\& \bigoplus\limits_{\tau \in \Delta(\mesh)} 
			\& \bigoplus\limits_{\tau \in \Delta(\mesh)} 
			\& \bigoplus\limits_{\tau \in \Delta(\mesh)} 
			\& \\[-2em]
			0 \arrow[r] 
			\& \mathbb{B}_{\Gamma_0}^0(\tau) \arrow[r, "\grad"] 
			\& \mathbb{B}_{\Gamma_0}^1(\tau) \arrow[r, "\curl"] 
			\& \mathbb{B}_{\Gamma_0}^2(\tau) \arrow[r, "\div"]
			\& \mathbb{B}_{\Gamma_0}^3(\tau) \arrow[r]
			\& 0.
		\end{tikzcd}
	\end{equation}
\end{lemma}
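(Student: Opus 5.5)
We read the first row of \cref{eq:stokes-complex-full-geom-decomp} as the complex \cref{eq:stokes-complex-full-alfeld-bcs} of spaces $V_{\Gamma_0}^{k,h}$; the same argument applies verbatim to the reduced spaces once their geometric decomposition is available. The plan is to deduce everything from the boundary-free decomposition \cref{eq:stokes-complex-full-geom-decomp-nobcs} together with \cref{lem:vertex-trace-bcs}, \cref{lem:support-skeleton-extended-bubble}, and the preceding lemma ($\mathcal{S}^k_{\Gamma_0}, \mathbb{B}^k_{\Gamma_0}(\tau) \subset V_{\Gamma_0}^{k,h}$). There are three claims to verify: the columns are direct sums, the rows are complexes, and the last row is exact.

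\emph{Columns.} Since \cref{eq:stokes-complex-full-geom-decomp-nobcs} is already a direct sum, the subspaces $\mathcal{S}^k_{\Gamma_0}$ and $\mathbb{B}^k_{\Gamma_0}(\tau)$ are independent, and the preceding lemma shows their sum lies in $V^{k,h}_{\Gamma_0}$. For the reverse inclusion, take $v \in V^{k,h}_{\Gamma_0}$ and write $v = s + \sum_\tau b_\tau$ according to \cref{eq:stokes-complex-full-geom-decomp-nobcs}. We must show $s \in \mathcal{S}^k_{\Gamma_0}$ and $b_\tau \in \mathbb{B}^k_{\Gamma_0}(\tau)$.
\begin{itemize}
\item[(a)] The components are determined by the degrees of freedom \cref{eq:global-dofs-bubble}. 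Specifically, $b_\tau$ is fixed by $\langle \Tr^k_\tau v, \cdot\rangle_{A^k(\tau)}$ restricted to $\tilde B^k(\tau)$, and $s$ is fixed by the currents $\mathcal{I}^k_\eta(v)$.
\item[(b)] For $\tau \in \Delta(\boundarymesh) \setminus \Delta_0^\flat(\boundarymesh)$, the vanishing of $v$ on $\Gamma_0$ gives $\Tr^k_\tau v = 0$. For vertices this is \cref{lem:vertex-trace-bcs}; for edges and faces of $\boundarymesh$ it follows directly from $v|_{\Gamma_0}=0$ and the vanishing of its derivatives, resp.\ its curl, as in that proof. Hence $b_\tau = 0$.
\item[(c)] The currents $\mathcal{I}^k_\eta(v)$ vanish for $\eta \in \Delta_k(\boundarymesh)$, so $s \in \mathcal{S}^k_{\Gamma_0}$.
\item[(d)] For $z \in \Delta_0^\flat(\boundarymesh)$ and $k<2$, \cref{lem:vertex-trace-bcs} shows that $\Tr^k_z v$ has only the $\unitvec{n}_\Gamma\otimes\unitvec{n}_\Gamma$ component. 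We must check that the harmonic-inner-product degrees of freedom single out exactly this component, so that $b_z \in \mathbb{B}^{k,nn}(z)$. Since $B^k(z)$ is the full vertex space, $\tilde B^k(z)$ is its kernel of $\tilde{\mathcal I}^k_z$ (relevant for $k=0$), and $b_z$ is characterized by $\Tr^k_z b_z = \Tr^k_z v$ modulo the current part, which we handle in (c). Whether this identification holds depends on how $\langle\cdot,\cdot\rangle_{A^k(z)}$ is defined. If it does not hold as stated, we would instead reorganize the vertex degrees of freedom at flat vertices so that they use point values $\Tr^k_z$ directly; this is equivalent on $\tilde B^k(z)$ by unisolvence.
\end{itemize}

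\emph{Rows.} $\dee^k$ maps $V^{k,h}_{\Gamma_0}$ into $V^{k+1,h}_{\Gamma_0}$. By \cref{eq:stokes-complex-full-geom-decomp-nobcs}, $\dee^k \mathcal{S}^k \subset \mathcal{S}^{k+1}$ and $\dee^k\mathbb{B}^k(\tau)\subset\mathbb{B}^{k+1}(\tau)$, so each row is a complex as soon as the boundary subspaces are preserved. Intersecting with $V_{\Gamma_0}^{k+1,h}$ and using the characterization obtained for the columns then gives $\dee^k \mathcal{S}^k_{\Gamma_0}\subset \mathcal{S}^{k+1}_{\Gamma_0}$ and $\dee^k\mathbb{B}^k_{\Gamma_0}(\tau)\subset\mathbb{B}^{k+1}_{\Gamma_0}(\tau)$.

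\emph{Exactness of the bubble rows.} We split into cases.
\begin{itemize}
\item For $\tau\notin\Delta(\boundarymesh)$ the row coincides with the bubble row of \cref{eq:stokes-complex-full-geom-decomp-nobcs}, which is exact.
\item For non-flat boundary simplices the row is identically zero.
\item The main obstacle is $z\in\Delta_0^\flat(\boundarymesh)$. Here the row reads $0\to\mathbb{B}^{0,nn}(z)\to\mathbb{B}^{1,nn}(z)\to 0\to 0$.
\end{itemize}
For this vertex case, the map $\grad : \mathbb{B}^0(z) \to \mathbb{B}^1(z)$ is an isomorphism. Indeed, the full vertex row is exact, and since $\tilde B^0(z)$ drops the value and $\tilde B^2(z)=0$ for a vertex, $\grad$ is an injective map between bubble spaces of equal dimension. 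It therefore suffices to show that $\grad$ maps $\mathbb{B}^{0,nn}(z)$ onto $\mathbb{B}^{1,nn}(z)$. The key identity is that for $u$ with $w=\grad u$ we have $\grad w=\hess u$. Thus the conditions $(I-\unitvec{n}_\Gamma\otimes\unitvec{n}_\Gamma)\hess u(z)=0$ and $(I-\unitvec{n}_\Gamma\otimes\unitvec{n}_\Gamma)\grad w(z)=0$ correspond exactly. Likewise, $\grad u(z)=0$ corresponds to $w(z)=0$. Both spaces are one-dimensional, parametrized by $\partial^2_{\unitvec n_\Gamma}u(z)$, respectively $\partial_{\unitvec n_\Gamma}(w\cdot\unitvec n_\Gamma)(z)$. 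Finally, $\curl$ vanishes on $\mathbb{B}^{1,nn}(z)$ because $\mathbb{B}^2_{\Gamma_0}(z)=\{0\}$ and the row is a complex, so the row is exact.
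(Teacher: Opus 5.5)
Your overall route is the paper's: decompose $v\in V^{k,h}_{\Gamma_0}$ using \cref{eq:stokes-complex-full-geom-decomp-nobcs}, show the skeletal part has vanishing boundary currents, kill the bubble components on $\Delta(\boundarymesh)$, identify the flat-vertex components via \cref{lem:vertex-trace-bcs}, and check the flat-vertex row using $\hess u=\grad(\grad u)$ and a dimension count. Deriving the row property from uniqueness in the columns is also fine. However, two steps of the column argument do not hold as written.

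\textbf{Step (b).} You assert $\Tr^k_\tau v=0$ for all $\tau\in\Delta(\boundarymesh)\setminus\Delta_0^\flat(\boundarymesh)$. For $k\in 0:1$ this is false whenever $\tau$ is an edge or face of $\boundarymesh$ with a vertex $z\in\Delta_0^\flat(\boundarymesh)$. In that case $\Tr^0_\tau v$ contains the block $\hess v(z)=\partial^2_{\unitvec{n}_\Gamma}v(z)\,\unitvec{n}_\Gamma\otimes\unitvec{n}_\Gamma$, and $\Tr^1_\tau v$ contains $\grad v(z)=\partial_{\unitvec{n}_\Gamma}(v\cdot\unitvec{n}_\Gamma)(z)\,\unitvec{n}_\Gamma\otimes\unitvec{n}_\Gamma$. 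These need not vanish (take any nonzero element of $\mathbb{B}^{0,nn}(z)$); this is exactly what \cref{lem:vertex-trace-bcs} records. The conclusion $b_\tau=0$ is still true, but it needs a different reason. One option: the degrees of freedom $\langle\Tr^k_\tau v,w\rangle_{A^k(\tau)}$, $w\in\tilde{B}^k(\tau)$, never see vertex blocks. The vertex blocks of $w$, of $\dee^k_\tau w$, and of the range of $\mathbb{Q}^k_\tau$ are zero, and the inner products on $A^k(\tau)$ are block-diagonal. This is what the paper tacitly uses when it says that quantity vanishes. A more robust option: treat flat vertices first, then apply your argument to $v':=v-\sum_{z\in\Delta_0^\flat(\boundarymesh)}b_z$. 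This $v'$ lies in $V^{k,h}_{\Gamma_0}$ by the preceding lemma. It satisfies $\Tr^k_\tau v'=0$ for every $\tau\in\Delta(\boundarymesh)$, using $\Tr^k_z b_z=\Tr^k_z v$ and \cref{lem:support-skeleton-extended-bubble}. It also has the same $\tau$-components as $v$ for $\dim\tau\geq 1$.

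\textbf{Step (d).} The fix above requires finishing (d), which you leave open. Your fallback of reorganizing the vertex degrees of freedom would change $\mathcal{S}^k$ and $\mathbb{B}^k(\tau)$, and hence prove a different statement. No knowledge of the specific inner product is needed:
\begin{itemize}
\item The decomposition gives $\langle\Tr^k_z b_z-\Tr^k_z v,w\rangle_{A^k(z)}=0$ for all $w\in\tilde{B}^k(z)$.
\item Both traces lie in $\tilde{B}^k(z)$. For $k=0$ this holds because $b_z(z)=\mathcal{I}^0_z(b_z)=0$ and $v(z)=0$. For $k\in 1:2$ it holds because $\tilde{B}^k(z)=B^k(z)=A^k(z)$.
\item Since $\langle\cdot,\cdot\rangle_{A^k(z)}$ is an inner product on $\tilde{B}^k(z)$, it follows that $\Tr^k_z b_z=\Tr^k_z v$.
\item \cref{lem:vertex-trace-bcs} then gives $b_z\in\mathbb{B}^{k,nn}(z)$ at flat vertices. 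It gives $b_z=0$ at non-flat boundary vertices, and for $k=2$ at all boundary vertices.
\end{itemize}
This is the content of the paper's one-line appeal to that lemma.

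\textbf{Minor slip in the exactness part.} $\tilde{B}^2(z)=B^2(z)=\mathbb{R}^3$, because the current constraint is only imposed when $k=\dim\tau$. So $\grad:\mathbb{B}^0(z)\to\mathbb{B}^1(z)$ is injective but not an isomorphism (dimensions $9$ and $12$). Your argument only uses injectivity on $\mathbb{B}^{0,nn}(z)$, the inclusion $\grad\mathbb{B}^{0,nn}(z)\subseteq\mathbb{B}^{1,nn}(z)$, and equal dimension one. That part therefore stands and agrees with the paper.
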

\begin{proof}	
	\noindent \textbf{Step 1: Direct sum decomposition. }
	The column for $k=3$ is identical to 
	\cref{eq:stokes-complex-full-geom-decomp-nobcs}, so 
	consider $k \in 0:2$.
	Let $v \in V_{\Gamma_0}^{k, h}$. Thanks 
	to the direct sum decomposition in 
	\cref{eq:stokes-complex-full-geom-decomp-nobcs}, we have 
	\begin{align*}
		v = v_S + \sum_{\tau \in \Delta(\mesh)} v_{\tau},
		\quad \text{with} \quad v_S \in \mathcal{S}^k, 
		\ v_{\tau} \in \mathbb{B}^k(\tau), 
		\ \tau \in \Delta(\mesh).
	\end{align*}
	For $\eta \in \Delta_k(\boundarymesh)$, we have 
	$\mathcal{I}_{\eta}^{k}(v_S) = \mathcal{I}_{\eta}^{k}(v) = 0$,
	and so $v_S \in \mathcal{S}_{\Gamma_0}^k$.
	
	Note that $\mathbb{B}_{\Gamma_0}^k(\tau) = \mathbb{B}^k(\tau)$
	if $\tau \notin \Delta(\boundarymesh)$, so 
	suppose that $\tau \in \Delta(\boundarymesh)$.
	By \cref{lem:support-skeleton-extended-bubble}, we have 
	\begin{align*}
		\langle \Tr_{\tau}^k v_{\tau}, w \rangle_{A^k(\tau)}
		= \langle \Tr_{\tau}^k v, w \rangle_{A^k(\tau)}
		\qquad \forall w \in \tilde{B}^k(\tau).
	\end{align*}
	If $\dim \tau \geq 1$ or $k = 2$, then the above quantity vanishes
	and so $v_{\tau} \equiv 0$ since 
	the above degrees of freedom are unisolvent 
	on $\tilde{B}^k(\tau)$ \cite[Lemma 5.2]{HuLiangLin25}.
	For $\dim \tau = 0$ and $k \in 0:1$, 
	\cref{lem:vertex-trace-bcs} shows that 
	$v_{\tau} \in \mathbb{B}^k_{\Gamma_0}(\tau)$.
	Thus, each column of \cref{eq:stokes-complex-full-geom-decomp}
	is a direct sum decomposition. \\
	
	\noindent \textbf{Step 2: Complex property. }
	The second row of \cref{eq:stokes-complex-full-geom-decomp} is a complex
	since the second row of \cref{eq:stokes-complex-full-geom-decomp-nobcs}
	is a complex and \cref{eq:currents-stokes-thm} holds.
	Thus, it remains to show that 
	the final row is an exact complex. For 
	$\tau \in \Delta(\mesh) \setminus \Delta_0^{\flat}(\boundarymesh)$, 
	we have $\mathbb{B}_{\Gamma_0}^k(\tau) = \mathbb{B}^k(\tau)$ 
	for all $k \in 0:3$ 
	or $\mathbb{B}_{\Gamma_0}^k(\tau) = \{0 \}$ for all $k \in 0:3$, 
	both of which are exact complexes thanks to 
	\cref{eq:stokes-complex-full-geom-decomp-nobcs}.
	
	Now suppose that $\tau \in \Delta_0^{\flat}(\boundarymesh)$, 
	for which the bottom row of \cref{eq:stokes-complex-full-geom-decomp} 
	reads
	\begin{equation*}
		\begin{tikzcd}[ampersand replacement=\&]
			0 \arrow[r] 
			\& \mathbb{B}^{0, nn}(\tau) \arrow[r, "\grad"] 
			\& \mathbb{B}^{1, nn}(\tau) \arrow[r, "\curl"] 
			\& 0 \arrow[r, "\div"]
			\& 0 \arrow[r]
			\& 0.
		\end{tikzcd}
	\end{equation*}
	$\mathbb{B}^{0, nn}(\tau)$ and 
	$\mathbb{B}^{1, nn}(\tau)$ clearly both have dimension 1,
	and so we only need to verify that 
	$\grad \mathbb{B}^{0, nn}(\tau) = 
	\mathbb{B}^{1, nn}(\tau)$. Since the bottom row of 
	\cref{eq:stokes-complex-full-geom-decomp-nobcs} is a complex
	and 
	\begin{align*}
		\Tr_{\tau}^1 \grad v 
		= 0 \oplus \partial_{\unitvec{n}_{\Gamma}}^2
		v(\tau) \unitvec{n}_{\Gamma} 
		\otimes \unitvec{n}_{\Gamma}
		\qquad \forall v \in \mathbb{B}^{0, nn}(\tau),
	\end{align*}
	we have
	$\grad \mathbb{B}^{0, nn}(\tau) = 
	\mathbb{B}^{1, nn}(\tau)$.
\end{proof}

\subsection{Cohomology via relative simplicial cochain complex}
\label{sec:proof-cohomology-full-spaces-bcs}

We now introduce the relative simplicial cochain complex, modifying the 
presentation in \cite{Licht17}.
For $k \in 0:3$, let $C_k(\mathcal{V})$ denote the space of simplicial 
$k$-chains (i.e. formal linear combinations of subsimplices of dimension $k$)
of a mesh $\mathcal{V}$.
Then, $C_k(\boundarymesh)$ is a subspace of $C_k(\mesh)$, and 
we define $C_k(\mesh, \boundarymesh) 
:= C_k(\mesh) / C_k(\boundarymesh)$. Let 
${\partial_k : C_k(\mesh, \boundarymesh) 
\to C_{k-1}(\mesh, \boundarymesh)}$
denote the standard simplicial boundary operator defined on the quotient space
uniquely by the condition
\begin{align*}
	\partial_k (\tau + C_k(\boundarymesh)) 
	= \sum_{\substack{\eta \in \Delta_{k-1}(\tau) \\ 
			\eta \notin \Delta_{k-1}(\boundarymesh)}}
	\mathcal{O}(\eta, \tau) (\eta + C_{k-1}(\boundarymesh))
	\qquad \forall \tau \in \Delta_k(\mesh) 
	\setminus \Delta_k(\boundarymesh).
\end{align*}
Let $\dual{\partial}_k : \dual{C_k(\mesh, \boundarymesh)} 
\to \dual{C_{k+1}(\mesh, \boundarymesh)}$ denote the corresponding 
cochain map defined uniquely by the condition:
\begin{align*}
	(\dual{\partial}_k \ell_k)(\omega) = \ell_k( \partial_{k+1} \omega)
	\qquad \forall \omega \in C_{k+1}(\mesh, \boundarymesh), \
	\forall \ell_k \in \dual{C_k(\mesh, \boundarymesh)}.
\end{align*}

\begin{proof}[Proof of \cref{thm:cohomology-full-alfeld-bcs}]
	Let $k \in 0:3$. 
	A simple consequence of \cite[Lemma 5.2]{HuLiangLin25} is that the currents 
	$\{ \mathcal{I}^k_{\eta} : \eta \in \Delta_k(\mesh) \}$
	are a unisolvent set of degrees of freedom on $\mathcal{S}^k$, and so
	$\{ \mathcal{I}^k_{\eta} : \eta \in \Delta_k(\mesh) 
	\setminus \Delta_k(\boundarymesh) \}$ are unisolvent on 
	$\mathcal{S}^k_{\Gamma_0}$. Thus, $\mathcal{S}^k_{\Gamma_0}$ and 
	$\dual{\mathcal{C}_k(\mesh, \boundarymesh)}$
	have the same dimension. We define 
	$\pi^k : \mathcal{S}^k_{\Gamma_0} \to \dual{\mathcal{C}_K(\mesh, \boundarymesh)}$
	uniquely by the condition 
	\begin{align*}
		(\pi^k v)(\tau + C_{k}(\boundarymesh)) = \mathcal{I}^k_{\tau}(v) 
		\qquad \forall \tau \in \Delta_{k}(\mesh) 
		\setminus \Delta_{k}(\boundarymesh), \
		\forall v \in \mathcal{S}^k_{\Gamma_0}.
	\end{align*}
	By unisolvency, $\pi^k$ is injective and hence surjective. Thanks 
	to \cref{eq:currents-stokes-thm}, we also have 
	the following commutativity for all $v \in \mathcal{S}_{\Gamma_0}^k$ and 
	$\tau \in \Delta_k(\mesh) \setminus \Delta_k(\boundarymesh)$: 
	\begin{align*}
		\dual{\partial}_k (\pi^k v)(\tau + C_{k+1}(\boundarymesh))
		&= \sum_{\substack{\eta \in \Delta_{k-1}(\tau) \\ 
				\eta \notin \Delta_{k-1}(\boundarymesh)}}  
		\mathcal{O}(\eta, \tau) (\pi^k v)(\eta + C_{k}(\boundarymesh))  \\
		&= \sum_{\substack{\eta \in \Delta_{k-1}(\tau) \\ 
				\eta \notin \Delta_{k-1}(\boundarymesh)}}
		\mathcal{O}(\eta, \tau) \mathcal{I}_{\eta}^k(v) \\
		&= \mathcal{I}_{\tau}(\dee^k v) \\
		&= (\pi^{k+1} \dee^k v)(\tau + C_{k+1}(\boundarymesh)),
	\end{align*}
	where we used that 
	$\mathcal{I}_{\eta}^k(v) = \mathcal{I}_{\rho}^{k+1}(\dee^k v) = 0$ 
	for all $\eta \in \Delta_k(\boundarymesh)$ and 
	$\rho \in \Delta_{k+1}(\boundarymesh)$. Thus, the diagram 
	\begin{equation*}
		\begin{tikzcd}[ampersand replacement=\&]
			0 \arrow[r] \arrow[d]
			\& \mathcal{S}_{\Gamma_0}^0 \arrow[r, "\grad"] \arrow[d, "\pi^0"] 
			\& \mathcal{S}_{\Gamma_0}^1 \arrow[r, "\curl"] \arrow[d, "\pi^1"]
			\& \mathcal{S}_{\Gamma_0}^2 \arrow[r, "\div"] \arrow[d, "\pi^2"]
			\& \mathcal{S}_{\Gamma_0}^3 \arrow[r] \arrow[d, "\pi^3"]
			\& 0 \arrow[d] \\
			0 \arrow[r] 
			\& \dual{C^0(\mesh, \boundarymesh)} \arrow[r, "\dual{\partial}_0"] 
			\& \dual{C^1(\mesh, \boundarymesh)} \arrow[r, "\dual{\partial}_1"] 
			\& \dual{C^2(\mesh, \boundarymesh)} \arrow[r, "\dual{\partial}_2"]
			\& \dual{C^3(\mesh, \boundarymesh)} \arrow[r]
			\& 0,
		\end{tikzcd}
	\end{equation*}
	commutes. Since $\pi^k$, $k \in 0:3$ are isomorphisms, the two 
	sequences have isomorphic cohomologies. Additionally, the 
	singular homology group and the relative simplicial homology group 
	have isomorphic cohomologies since $\Omega$ and $\Gamma_0$ admit a
	conforming mesh 
	(see e.g. \cite[Chapter 4, Section 6, Theorem 8]{Spanier95}).
\end{proof}

\section{Construction of a ``minimal'' conforming complex}
\label{sec:gn-complex-construction}

We now turn to the construction of a ``minimal'' conforming 
subcomplex of \cref{eq:stokes-complex-full-alfeld-bcs}. With 
$\tilde{V}^{2, h}(K_A) = \GN(K_A)$ and $\tilde{V}^{3, h}(K_A) = \DG^0(K)$
as in \cref{sec:minimal-complexes-results},
we seek discrete spaces $\tilde{V}^{k, h}(K_A) \subset V^{k, h}(K_A)$, 
$k \in 0:1$, so that 
\cref{eq:stokes-complex-reduced-alfeld-element} is an exact complex.
We want the spaces to be sufficiently large so that the inclusions 
$\mathcal{P}_{3-k}(K) \otimes \mathbb{X}^k \subset \tilde{V}^{k, h}(K_A)$ hold, 
but also minimal in the sense that 
degrees of freedom in \cref{eq:walkington-dofs} for $k=0$ and 
\cref{eq:h1curl-dofs} for $k=1$ are unisolvent. We achieve this by defining 
the spaces $\tilde{V}^{k, h}(K_A)$ as subspaces of $V^{k, h}(K_A)$ satisfying
particular constraints.

\subsection{Local $H^2$-conforming space $\tilde{V}^{0, h}(K_A)$}

Let $K \in \mesh$ and suppose that the bilinear form
${a_K(\cdot,\cdot) : H^1(K)^3 \times H^1(K)^3 \to \mathbb{R}}$ 
is continuous and satisfies:
\begin{subequations}
	\label{eq:kill-h2-interior-bilinear}
	\begin{alignat}{2}
		\label{eq:kill-h2-interior-bilinear-kill-p2}
		a_K(u, \grad \phi) &= 0 
		\qquad & &\forall u \in \mathcal{P}_2(K)^3, 
		\ \forall \phi \in V^{0, h}_0(K_A), \\
		\label{eq:kill-h2-interior-bilinear-coercive}
		a_K(\grad \psi, \grad \psi) &\geq 
		\gamma_a \|\psi\|_{H^2(K)}^2 
		\qquad & &\forall \psi \in V^{0, h}_0(K_A),
	\end{alignat} 
\end{subequations}
where $\gamma_a$ is independent of $K$ and we recall 
that $V^{0, h}_0(K_A) = V^{0, h}(K_A) \cap H^2_0(K)$.
One bilinear form satisfying \cref{eq:kill-h2-interior-bilinear}
is $a_K(\cdot,\cdot) = (\grad \cdot, \grad \cdot)_{L^2(K)}$.
For each face $f \in \Delta_2(K)$, let 
$\ell_f \in \mathcal{P}_3(f)^*$ be any linear functional 
with the following properties:
\begin{align}
	\label{eq:kill-cubic-bubble-dof}
	\ell_f(p) = 0 \qquad \forall p \in \mathcal{P}_2(f)
	\quad \text{and} \quad \ell_f(b_f) \neq 0.
\end{align}
For example, one could take 
\begin{align*}
	\ell_f(p) = \int_f (I - \mathbb{P}_{2, f})b_f p \d{s} 
	\qquad \forall p \in \mathcal{P}_3(f),
\end{align*}
where $\mathbb{P}_{2, f} : L^1(f) \to \mathcal{P}_2(f)$
is the $L^2(f)$-orthogonal projector onto $\mathcal{P}_2(f)$.

Consider the following subspace of $V^{0, h}(K_A)$:
\begin{multline}
	\label{eq:local-h2-space-def}
	\tilde{V}^{0, h}(K_A) := \{ v \in V^{0, h}(K_A) :
	\partial_n v|_{f} \in \mathcal{P}_3(f) 
	\text{ and } \ell_f(\partial_n v|_{f}) = 0  
	\ \forall f \in \Delta_2(K) \\
	\text{and } a_K(\grad v, \grad w) = 0 
	\ \forall w \in V^{0, h}_0(K_A) \}.
\end{multline}
The following result summarizes the key properties of $\tilde{V}^{0, h}(K_A)$.
\begin{lemma}
	\label{lem:walkington-dofs-unisolvent}
	$\dim \tilde{V}^{0, h}(K_A) = 40$, 
	the degrees of freedom in \cref{eq:walkington-dofs} with 
	$\mesh = K_A$ are unisolvent
	on $\tilde{V}^{0, h}(K_A)$,
	and $\mathcal{P}_3(K) \subset \tilde{V}^{0, h}(K_A)$.
\end{lemma}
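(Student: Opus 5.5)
The proof has three parts: check $\mathcal{P}_3(K) \subset \tilde{V}^{0,h}(K_A)$, show that a function in $\tilde{V}^{0,h}(K_A)$ whose degrees of freedom \cref{eq:walkington-dofs} all vanish is zero, and read off the dimension. The degrees of freedom \cref{eq:walkington-dofs} on $K_A$ consist of $10$ functionals (value, gradient, Hessian) at each of the $4$ vertices of $K$. Hence injectivity together with a lower bound $\dim \tilde{V}^{0,h}(K_A) \ge 40$ gives both unisolvence and $\dim = 40$.

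\emph{Inclusion.} Let $q \in \mathcal{P}_3(K)$. Then $q \in V^{0,h}(K_A)$ since $p\ge 3$, and $\partial_n q|_f \in \mathcal{P}_2(f) \subset \mathcal{P}_3(f)$, so $\ell_f(\partial_n q|_f)=0$ by \cref{eq:kill-cubic-bubble-dof}. Moreover $\grad q \in \mathcal{P}_2(K)^3$, so $a_K(\grad q, \grad w)=0$ for all $w \in V^{0,h}_0(K_A)$ by \cref{eq:kill-h2-interior-bilinear-kill-p2}. Since $\dim \mathcal{P}_3(K)=20$, this only gives $\dim \ge 20$, so the lower bound must come from counting constraints.

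\emph{Dimension lower bound.} I would describe $\tilde V^{0,h}(K_A)$ through boundary data. By the geometric decomposition \cref{eq:geometric-decomposition} for $k=0$, an element $v \in V^{0,h}(K_A)$ is determined by its vertex data $\Tr^0_z v$, its edge bubbles $B^0(e)$, its face bubbles $B^0(f)$ (that is, $v|_f$ and $\partial_n v|_f$), and its interior part in $V^{0,h}_0(K_A)$. The condition $a_K(\grad v,\grad w)=0$ for all $w\in V^{0,h}_0(K_A)$ determines the interior part uniquely from the boundary data: it is an $a_K$-projection, well defined by the coercivity \cref{eq:kill-h2-interior-bilinear-coercive}. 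The constraints $\partial_n v|_f \in\mathcal{P}_3(f)$ and $\ell_f(\partial_n v|_f)=0$ act only on traces. So $\tilde V^{0,h}(K_A)$ is isomorphic to the space $T$ of admissible traces $(v|_{\partial K}, \grad v|_{\partial K}, \hess v(z))$ satisfying these face constraints. The normal derivative on $f$ is a cubic, hence determined by its $1$-jet at the three vertices ($9$ values) together with the edge data, and $\ell_f$ removes its remaining interior bubble direction $b_f$. I would show $\dim T = 40$ by exhibiting, for each of the $40$ vertex values, a trace in $T$ with those vertex values. Concretely, I would build $v|_{\partial K}$ and the tangential derivatives from Hermite/$C^1$-type interpolation in $\mathcal{P}_{p+2}$ on each face, with edge interiors fixed by an arbitrary but consistent choice such as the cubic Hermite interpolant. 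I would take $\partial_n v|_f$ to be the unique cubic with the prescribed vertex $1$-jets and $\ell_f=0$, and then lift this trace using the surjectivity of traces onto the global-compatible data from \cite{HuLiangLin25}, as encoded in \cref{eq:trace-characterizes-space}.

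\emph{Injectivity, the main obstacle.} Suppose $v \in \tilde V^{0,h}(K_A)$ has $D^\alpha v(z)=0$ for all $|\alpha|\le2$ and all vertices $z$. On each edge $e$, $\partial_n v$ restricted to $e$ lies in $\mathcal{P}_3(e)$ and vanishes to first order at both endpoints, so it is zero; this holds for both normal directions to $e$. The harder part is $v|_e \in\mathcal{P}_{p+2}(e)$ and the tangential derivatives. These are not forced to be cubic by the definition. Here I must check whether the definition of $\tilde V^{0,h}$ implicitly constrains $v|_f$, and I expect this is the real obstacle. If the constraint is only on $\partial_n v|_f$, then face and edge bubbles of $v|_f$ would survive and unisolvence would fail. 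So either the intended reading restricts $v|_{\partial K}$ as well, for instance because $\tilde V^{0,h}$ inherits restrictions through $\grad \tilde V^{0,h}\subset \tilde V^{1,h}$, or one argues that $\grad_f(v|_f)$ agrees with the tangential part of $\grad v$, which lies in a space controlled by the Guzm\'an--Neilan-type structure. I would first test whether $\grad v|_f \in \mathcal{P}_3$ can be deduced from $\partial_n v|_f\in\mathcal{P}_3(f)$ together with the Alfeld $C^1$ structure. Failing that, I would prove the statement by showing that $v|_{\partial K}$ is forced to be the restriction of a cubic. Granting that $v|_f$ and $\partial_n v|_f$ vanish, $v\in V^{0,h}_0(K_A)$, and taking $w=v$ in the $a_K$-condition together with \cref{eq:kill-h2-interior-bilinear-coercive} gives $\|v\|_{H^2(K)}=0$. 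This yields injectivity and, combined with the lower bound, $\dim = 40$.
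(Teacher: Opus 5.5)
\textbf{There is a genuine gap: injectivity is never completed.} You correctly locate the obstacle, namely controlling $v|_e$ and $v|_f$ themselves. The missing observation is that the lemma is a statement about $p=3$. Here $V^{0,h}(K_A)$ consists of $C^1$ piecewise quintics with quartic gradients and $C^2$ vertex supersmoothness; compare $\GN(K_A)\subset\CG^3(K_A)^3$, and the paper's proof applies \cref{eq:proof:Pp1-face-dofs} to $\partial_n v|_f\in\mathcal{P}_4(f)$ ``with $p=3$''. With $p=3$ the obstacle disappears by counting:
\begin{itemize}
\item On each edge $e$, $v|_e\in\mathcal{P}_5(e)$ has vanishing value, first and second tangential derivatives at both endpoints. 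These are six conditions on a six-dimensional space, so $v|_e=0$.
\item Your own (correct) observation is that the derivatives along $e$ in the two face-normal directions are cubics with vanishing $1$-jets at the endpoints. Combined with the previous point, this gives $\grad v|_e=0$.
\item Then $v|_f\in\mathcal{P}_5(f)$ vanishes to second order on $\partial f$, so it is divisible by a degree-six polynomial and hence $v|_f=0$. Equivalently, $v|_f$ is a Bell quintic by \cref{lem:h2h1curl-edge-normal-degree-reduction}; this is the argument behind the paper's appeal to \cite{Walkington14}.
\item Finally, $\partial_n v|_f\in\mathcal{P}_3(f)\cap H^1_0(f)=\spann\{b_f\}$, and $\ell_f$ kills it by \cref{eq:kill-cubic-bubble-dof}.
\end{itemize}
Only at this point does your closing coercivity step apply.

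For $p\ge 4$ your worry is well founded, and the statement is actually false. Take the face trace $v|_f=b_f^2\in\mathcal{P}_6(f)$ with $\partial_n v|_f=0$ and all other boundary traces zero. It is one of the $\dim\mathcal{P}_{p-4}(f)$ face bubbles, so it lifts to $V^{0,h}(K_A)$. After an $a_K$-correction in the interior, it is a nonzero element of $\tilde{V}^{0,h}(K_A)$ with vanishing degrees of freedom. So neither of your proposed rescues can work:
\begin{itemize}
\item The inclusion $\grad\tilde{V}^{0,h}(K_A)\subset\tilde{V}^{1,h}(K_A)$ holds automatically from the definitions and imposes nothing further on $v|_{\partial K}$.
\item The Guzm\'an--Neilan structure says nothing about the tangential gradient of $v$.
\end{itemize}

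\textbf{The lower bound is only sketched, and parts of the sketch do not hold together.} Fixing edge interiors ``by an arbitrary but consistent choice such as the cubic Hermite interpolant'' is incompatible with the prescribed vertex second derivatives. For $p=3$ nothing is free: $v|_e$ is the quintic Hermite interpolant of the $2$-jets, the normal derivatives along $e$ are cubic Hermite interpolants of the $1$-jets, and $v|_f$ is the Bell interpolant.

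More importantly, \cref{eq:trace-characterizes-space} only characterizes the glued global space. It does not say that every compatible boundary datum on $\partial K$ is the trace of an element of $V^{0,h}(K_A)$. That extension property is what the unisolvent degrees of freedom of \cite[Lemma 4.8]{FuGuzmanNeilan20} provide (equivalently \cref{eq:geometric-decomposition}). The paper uses them directly:
\begin{enumerate}
\item Prescribe the vertex $2$-jets.
\item Take the edge and face moments of $\partial_n v$ from the unique cubic $w_{n,f}$ with the prescribed vertex $1$-jets and $\ell_f(w_{n,f})=0$.
\item Replace the interior degrees of freedom by $a_K(\grad v,\grad\phi)=0$, which is legitimate by \cref{eq:kill-h2-interior-bilinear-coercive}.
\item Check that $\partial_n v|_f-w_{n,f}\in\mathcal{P}_4(f)$ satisfies \cref{eq:proof:Pp1-face-dofs} and therefore vanishes.
\end{enumerate}
Your inclusion step for $\mathcal{P}_3(K)$ is correct and matches the paper.
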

\begin{proof}
	We follow similar arguments as in the proof of
	\cite[Lemma 2.3]{Walkington14}. 
	Note that the number of independent linear functionals 
	on $V^{0, h}(K_A)$ in 
	\cref{eq:walkington-dofs} is 40. \\
	
	\noindent \textbf{Step 1: $\dim \tilde{V}^{0, h}(K_A) \leq 40$. } 
	Assume that $v \in \tilde{V}^{0, h}(K_A)$ and the degrees of freedom in 
	\cref{eq:walkington-dofs} vanish. Then, as shown in the proof of
	\cite[Lemma 2.3]{Walkington14}, $v|_{\partial K} = 0$ and 
	\begin{align*}
		D_f^{\alpha} (\partial_n v|_{f})(z) = 0 \qquad 
		\forall |\alpha| \leq 1, \ 
		\forall z \in \Delta_0(f), \ \forall f \in \Delta_2(K),
	\end{align*}
	where we recall that $D_f$ denotes the surface differential. Thus, 
	$\partial_n v|_{f} \in \spann\{ b_f \}$ and 
	$\ell_f(\partial_n v|_{f}) = 0$, so $v \in V^{0, h}_0(K)$. 
	The coercivity of $a_K(\cdot,\cdot)$
	\cref{eq:kill-h2-interior-bilinear-coercive}
	gives $v \equiv 0$. Consequently, $\dim \tilde{V}^{0, h}(K_A) \leq 40$. \\
	
	\noindent \textbf{Step 2: $\dim \tilde{V}^{0, h}(K_A) \geq 40$. } 	
	Suppose we are given
	arbitrary values for the degrees of freedom in \cref{eq:walkington-dofs}
	$\{ c_{z}^{\alpha} : |\alpha| \leq 2, \ z \in \Delta_0(K) \}$.
	We now show that there exists $v \in \tilde{V}^{0, h}(K_A)$ with 
	these degrees of freedom by using a minor modification of
	the degrees of freedom for the space $V^{0, h}(K_A)$ in 
	\cite[Lemma 4.8]{FuGuzmanNeilan20}. 
	
	For each $f \in \Delta_2(K)$, let $w_{n, f} \in \mathcal{P}_3(f)$ be
	the unique cubic polynomial satisfying
	\begin{alignat*}{2}
		D_f^{\alpha} w_{n, f}(z) &= \sum_{i=1}^{3} c_{z}^{\alpha + e_i} 
		(\unitvec{n}_f \cdot \unitvec{e}_i)
		\qquad & &\forall |\alpha| \leq 1, 
		\ \forall z \in \Delta_0(f), \\
		\ell_{f} (w_{n, f}) &= 0. \qquad & & 
	\end{alignat*}
	Thanks to \cite[Lemma 4.8]{FuGuzmanNeilan20}, there exists 
	$v \in V^{0, h}(K_A)$ satisfying
	\begin{alignat*}{2}
		D^{\alpha} v(z) &= c_{z}^{\alpha} 
		\qquad & &\forall |\alpha| \leq 2, \ \forall z \in \Delta_0(K), \\
		\int_{e} \grad v \cdot \unitvec{n}_f \d{s} &= \int_{e} w_{n, f} \d{s}
		\qquad & &\forall e \in \Delta_1(f), \ \forall f \in \Delta_2(K), \\
		\int_f (\grad v \cdot \unitvec{n}_f) \kappa \d{s} 
		&= \int_f w_{n, f} \kappa \d{s} 
		\qquad & &\forall \kappa \in \mathcal{P}_1(f), 
		\ \forall f \in \Delta_2(K), \\
		a_K(\grad v, \grad \phi) &= 0
		\qquad & &\forall \phi \in V^{0, h}_0(K_A).  		
	\end{alignat*}
	By construction, the degrees of freedom in \cref{eq:walkington-dofs} of 
	$v$ match $\{ c_{z}^{\alpha}\}$ in the sense that
	\begin{alignat*}{2}
		D^{\alpha} v(z) &= c_{z}^{\alpha} 
		\qquad & &\forall |\alpha| \leq 2, \ \forall z \in \Delta_0(K).
	\end{alignat*}
	We finish the proof by showing that for $f \in \Delta_2(K)$,
	$\partial_{n}  v|_{f} = w_{n, f}$ and thus $v \in \tilde{V}^{0, h}(K_A)$. 
	By construction, we have that 
	$v_f := (\partial_n v|_f - w_{n, f}) \in \mathcal{P}_{4}(f)$
	satisfies \cref{eq:proof:Pp1-face-dofs} with $p=3$, and so 
	$\partial_n v|_{f}  = w_{n, f}$.	\\
	
	\noindent \textbf{Step 3: Inclusion of cubics. }
	Note that for any $v \in \mathcal{P}_3(K)$, 
	\cref{eq:kill-h2-interior-bilinear-kill-p2} gives 
	$a_K(\grad v, \grad w) = 0$ for all $w \in V^{0, h}_0(K_A)$,
	while \cref{eq:kill-cubic-bubble-dof} gives $\ell_f(\partial_n v|_{f}) = 0$
	for all $f \in \Delta_2(K)$,
	and so $\mathcal{P}_3(K) \subset \tilde{V}^{0, h}(K_A)$.
\end{proof}

\begin{remark}
	An alternative reduction of $V^{0, h}(K_A)$ was introduced in 
	\cite{Walkington14}, in which
	the restriction $\partial_n v|_f \in \mathcal{P}_3(f)$ 
	for all $f \in \Delta_2(K)$ was also imposed and
	$C^4$ continuity is imposed at the barycenter of $K$. This supersmooth space
	locally reproduces $\mathcal{P}_{4}(K)$ and the degrees of freedom 
	are \cref{eq:walkington-dofs} augmented with one degree of freedom per face 
	for the normal derivative and a single interior degree of freedom.
\end{remark}

\subsection{Local $H^1(\curl)$-conforming space $\tilde{V}^{1, h}(K_A)$}

Note that $\tilde{V}^{0, h}(K_A)$ chosen as in \cref{eq:local-h2-space-def}
appears to the left of $\tilde{V}^{1, h}(K_A)$ in
the complex \cref{eq:stokes-complex-reduced-alfeld-element}, while
$\GN(K_A)$ appears to the right. To satisfy the complex property,
we simply define $\tilde{V}^{1, h}(K_A)$ to be the subspace of
$V^{1, h}(K_A)$ satisfying the constraints
imposed by these two spaces:
\begin{multline}
	\label{eq:local-h1curl-space-def}
	\tilde{V}^{1, h}(K_A) := \{ 
	v \in V^{1, h}(K_A) : 
	v \cdot \unitvec{n}_f|_{f} \in \mathcal{P}_3(f) 
	\text{ and } \ell_f(v \cdot \unitvec{n}_f|_{f}) = 0
	\ \forall f \in \Delta_2(K), \\
	\curl v \in \GN(K_A), \text{ and }
	a_K(v, \grad \phi) = 0 
	\ \forall \phi \in V^{0, h}_0(K_A)  
	\}.
\end{multline}

\begin{lemma}
	\label{lem:h1curl-local-unisolvence}
	$\dim \tilde{V}^{1, h}(K_A) = 54$, the degrees of freedom in 
	\cref{eq:h1curl-dofs} with $\mesh = K_A$ are unisolvent, and 
	$\mathcal{P}_2(K)^3 \subset \tilde{V}^{1, h}(K_A)$.	
\end{lemma}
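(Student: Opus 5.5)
The proof mirrors \cref{lem:walkington-dofs-unisolvent}: an upper bound on the dimension via injectivity of the degrees of freedom, a lower bound by explicit construction, and a direct check of the inclusion. First we count the functionals in \cref{eq:h1curl-dofs} on $K$. There are four vertices, and at each we take the value ($3$) and the gradient ($9$), for $48$. The six edges add one tangential moment each, giving $54$.

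\textbf{Step 1 (inclusion).} Let $u \in \mathcal{P}_2(K)^3$. Then $u\cdot\unitvec{n}_f|_f \in \mathcal{P}_2(f)$, so $\ell_f$ annihilates it by \cref{eq:kill-cubic-bubble-dof}. Next, $\curl u \in \mathcal{P}_1(K)^3 \subset \GN(K_A)$. Finally, $a_K(u,\grad\phi)=0$ for $\phi \in V^{0,h}_0(K_A)$ by \cref{eq:kill-h2-interior-bilinear-kill-p2}. Hence $\mathcal{P}_2(K)^3 \subset \tilde V^{1,h}(K_A)$.

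\textbf{Step 2 ($\dim \le 54$).} Suppose $v\in\tilde V^{1,h}(K_A)$ and all functionals in \cref{eq:h1curl-dofs} vanish. On each face $f$ we first recover the traces.

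\begin{itemize}
\item[(a)] The tangential component $v_f^t$ is a continuous piecewise polynomial on the Clough--Tocher split of $f$ with $\rot_f v_f^t = \curl v\cdot \unitvec{n}_f|_f = (\curl v|_f)\cdot\unitvec{n}_f$. Since $\curl v \in \GN(K_A)$, its face trace is $\mathcal{P}_1(f)^3 + \mathrm{span}\{b_f \unitvec{n}_f\}$. The vertex data of $\curl v$ vanish (they come from $\grad v(z)=0$), and its face flux is $\int_f \curl v\cdot \unitvec{n}_f = \int_{\partial f} v\cdot \unitvec{t}$, which is zero by the edge moments. The $\GN$ degrees of freedom \cref{eq:gn-dofs} are unisolvent (\cite[Lemma 4.3]{GuzmanNeilan18}), so $\curl v|_{\partial K}=0$.
\item[(b)] On each edge $e$, $v$ is a polynomial of degree $p+1$ whose values and derivatives vanish at the endpoints. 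We argue via the $\curl v$ trace relations together with the surface potential: $\rot_f v^t_f=0$ and $v^t_f$ has vanishing vertex values, gradients and edge circulations, so $v^t_f=\grad_f\psi$ for a $C^1$ Clough--Tocher-type potential $\psi$ with vanishing vertex $C^2$ data. Using that $\tilde V^{0,h}$-type face traces are determined by vertex data (the proof of \cite[Lemma 2.3]{Walkington14}), this forces $\psi=0$, hence $v^t_f=0$.
\item[(c)] The normal component $v\cdot\unitvec{n}_f|_f\in\mathcal{P}_3(f)$ has vanishing values and tangential gradients at the vertices, so it lies in $\mathrm{span}\{b_f\}$. The condition $\ell_f(\cdot)=0$ then kills it.
\end{itemize}

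Thus $v|_{\partial K}=0$ and $\curl v|_{\partial K}=0$, i.e. $v\in V^{1,h}_0(K_A)$ with $\curl v \in \GN(K_A)\cap H^1_0 = \{0\}$. By exactness of the bubble complex (\cref{lem:current-bubble-exactness} for $\tau=K$; the lowest-index currents vanish trivially), $v=\grad\phi$ with $\phi\in V^{0,h}_0(K_A)$. Then \cref{eq:kill-h2-interior-bilinear-coercive} together with $a_K(v,\grad\phi)=0$ gives $\phi=0$, so $v=0$.

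\textbf{Step 3 ($\dim\ge 54$).} Given arbitrary data for \cref{eq:h1curl-dofs}, we build $v$ in three stages, following Step 2 of \cref{lem:walkington-dofs-unisolvent}.

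\begin{itemize}
\item[(i)] Choose $w\in\GN(K_A)$ whose vertex values match the data-determined $\curl$ values (the skew part of the prescribed gradients). Its face fluxes are set by Stokes' theorem from the edge tangential integrals, with signs fixed by \cref{eq:currents-stokes-thm}. Then $\div w=0$ follows, because $\div w\in\mathcal{P}_0$ and the total flux is zero.
\item[(ii)] Use the degrees of freedom of $V^{1,h}(K_A)$ from \cite[Lemma 4.11]{FuGuzmanNeilan20} to pick $v\in V^{1,h}(K_A)$ with the given vertex data and edge moments, with normal face trace equal to the cubic fixed by the vertex data and the $\ell_f$ condition, and with the remaining face and interior curl moments matching $w$.
\item[(iii)] Correct by $\grad\phi$, $\phi \in V^{0,h}_0(K_A)$, to enforce the $a_K$-orthogonality; this is solvable by coercivity and leaves traces and $\curl$ unchanged.
\end{itemize}

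That $\curl v=w$ exactly then follows from the exact local complex \cref{eq:stokes-complex-local-full-alfeld}, since $\curl v-w$ is a divergence-free bubble with zero moments.

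The main obstacle is the face-trace argument (b) in Step 2, and the matching compatibility bookkeeping in Step 3(ii): showing that the tangential trace is fully determined by the vertex $C^1$ data and the edge circulations. If the potential argument is awkward, an alternative is a direct count: compute $\dim\tilde V^{1,h}(K_A)\ge 54$ via the exact local sequence. This uses $\dim\tilde V^{0,h}=40$, $\dim \GN(K_A)=16$ and $\dim\DG^0=1$, and exactness gives $\dim \tilde V^{1,h} - 40 + 1 = 16 - 1$, i.e. $54$, once surjectivity of $\curl$ onto $\ker\div\cap\GN$ is established by a construction like Step 3. Combined with Step 2, this yields equality.
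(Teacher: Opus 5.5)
Your count, the inclusion of $\mathcal{P}_2(K)^3$, and Steps 2(a) and 2(c) are correct. Using unisolvence of the $\GN$ degrees of freedom in (a) in fact gives $\curl v\equiv 0$ on all of $K$ directly.

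\textbf{The gap is in Step 2(b).} The Alfeld split does not subdivide the faces of $K$, so there is no Clough--Tocher structure on $f$. Instead $v|_f\in\mathcal{P}_4(f)^3$, and your potential $\psi$ is a single quintic. Vanishing value, gradient and Hessian at the three vertices give only $18$ conditions on the $21$-dimensional space $\mathcal{P}_5(f)$, so they do not force $\psi=0$. Citing Walkington's proof does not help unless you show $\psi$ has the structure that proof uses.

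The missing ingredient is that $\psi$ is a Bell function. For an edge $e=f\cap f'$, the in-plane normal $\nu_{e,f}$ lies in $\spann\{\unitvec{n}_f,\unitvec{n}_{f'}\}$. Both $v\cdot\unitvec{n}_f|_f$ and $v\cdot\unitvec{n}_{f'}|_{f'}$ are cubic by \cref{eq:local-h1curl-space-def}, so $\partial_{\nu_{e,f}}\psi=v\cdot\nu_{e,f}|_e\in\mathcal{P}_3(e)$; this is \cref{lem:h2h1curl-edge-normal-degree-reduction}. Unisolvence of the Bell element then gives $\psi=0$.

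With this added, your route is valid: recover $v|_{\partial K}=0$, then use bubble-complex exactness and coercivity of $a_K$. It is, however, a detour. Since (a) already gives $\curl v\equiv 0$ on $K$:
\begin{enumerate}
\item exactness of \cref{eq:stokes-complex-local-full-alfeld} gives $v=\grad\phi$ with $\phi\in V^{0,h}(K_A)$;
\item the constraints in \cref{eq:local-h1curl-space-def} place $\phi$ in $\tilde{V}^{0,h}(K_A)$;
\item the vanishing edge circulations let you normalize $\phi$ to vanish at the vertices;
\item \cref{lem:walkington-dofs-unisolvent} then gives $\phi=0$.
\end{enumerate}
This is the paper's argument. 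It reuses the Bell-type face argument already contained in \cref{lem:walkington-dofs-unisolvent} rather than redoing it for the tangential trace of a vector field.

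\textbf{Step 3.} This matches the paper's construction in substance and is, if anything, more careful. Checking $\div w=0$ is necessary for $\curl v=w$ to be attainable; it holds because each edge enters the fluxes of two faces with opposite induced orientations. The separate correction by $\grad\phi$, $\phi\in V_0^{0,h}(K_A)$, cleanly justifies the paper's ``slight modification'' of the degrees of freedom of \cite[Lemma 4.11]{FuGuzmanNeilan20}.

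Two points need tightening:
\begin{enumerate}
\item To show $u:=\curl v-w$ vanishes on $\partial K$, you need the edge and face moments. You also need that, once $u$ vanishes on the edges, $u\cdot\unitvec{n}_f|_f\in\spann\{b_f\}$ has zero mean by the flux identity.
\item The final step needs exactness of the bubble complex (\cref{lem:current-bubble-exactness} with $\tau=K$), so that $u\in\curl V_0^{1,h}(K_A)$ and the interior moments kill it. Exactness of \cref{eq:stokes-complex-local-full-alfeld} alone only gives $u=\curl\xi$ with $\xi\in V^{1,h}(K_A)$.
\end{enumerate}
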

\begin{proof}
	Note that there are 54 linearly independent functionals 
	on $V^{1, h}(K_A)$ in \cref{eq:h1curl-dofs}. \\
	
	\noindent \textbf{Step 1: $\dim \tilde{V}^{1, h}(K_A) \leq 54$. } 
	Let $v \in \tilde{V}^{1, h}(K_A)$ and suppose that 
	the degrees of freedom in \cref{eq:h1curl-dofs} vanish. 
	Then, $\curl v$ vanishes on $\Delta_0(K)$. 
	Since $\curl v|_{e} \in \mathcal{P}_1(e)^3$ 
	for all edges $e \in \Delta_1(K)$, $\curl v$ also vanishes
	on $\Delta_1(K)$. Moreover, for $f \in \Delta_2(K)$,
	$\curl v \times \unitvec{n} \in \mathcal{P}_1(f)^3$,
	and so $\curl v \times \unitvec{n}|_f \equiv 0$.
	Finally,
	\begin{align*}
		\int_{f} \curl v \cdot \unitvec{n}_f \d{s} 
		= \sum_{e \in \Delta_1(f)} \mathcal{O}(e, f) 
		\int_{e} v \cdot \unitvec{t}_{e} \d{s}
		= 0,
	\end{align*}
	where we recall that $\mathcal{O}(e, f)$ is the orientation of $e$ relative 
	to $f$.
	Since $\curl v \cdot \unitvec{n}_f|_f \in \mathcal{P}_3(f) \cap H^1_0(f)$ 
	and additionally $\curl v\cdot \unitvec{n}_f|_{\partial f}
	= 0$, we have $\curl v \cdot \unitvec{n}_f|_{f} \equiv 0$.
	Thus, $\curl v|_{\partial K} \equiv 0$ and 
	so $\curl v \equiv 0$ since 
	$\curl v \in \GN(K_A)$.
	
	Thanks to the exactness of \cref{eq:stokes-complex-local-full-alfeld}, 
	there exists
	$\phi \in V^{0, h}(K_A)$ such that $v = \grad \phi$. 
	By the definition of $\tilde{V}^{1, h}(K_A)$, 
	$\phi \in \tilde{V}^{0, h}(K_A)$. 
	Moreover, since $\int_e v \cdot \unitvec{t}_e = 0$ for all 
	$e \in \Delta_1(K)$, 
	the fundamental theorem of calculus shows that $\phi$ takes the same
	value at every vertex in $\Delta_0(K)$. By subtracting a constant
	from $\phi$, which does not change $\grad \phi$, we may
	assume that $\phi$ vanishes at the vertices. Thus,
	\cref{eq:walkington-dofs} vanishes, and so $\phi \equiv 0$ by 
	\cref{lem:walkington-dofs-unisolvent}.
	Consequently, $v \equiv 0$ and 
	$\dim \tilde{V}^{1, h}(K_A) \leq 54$.
	
	\textbf{Step 2: $\dim \tilde{V}^{1, h}(K_A) \geq 54$. } 
	Suppose we are given
	arbitrary values for the degrees of freedom in \cref{eq:h1curl-dofs}
	$\{ c_{z, 1}^{\alpha}, \ldots, c_{z, 3}^{\alpha}, c_e
	: |\alpha| \leq 1, \ z \in \Delta_0(K), \ e \in \Delta_1(K) \}$.
	We now show that there exists $v \in \tilde{V}^{1, h}(K_A)$ with 
	these degrees of freedom. We do this using the degrees of freedom for 
	the space $V^{1, h}(K_A)$ given in 
	\cite[Lemma 4.11]{FuGuzmanNeilan20} (with slight modification).
	
	Let $w_c \in \GN(K_A)$ satisfy
	\begin{subequations}
		\label{eq:proof:gn-interp-curl}
		\begin{alignat}{2}
			w_c(z) &= \sum_{i, j, k=1}^{3} \epsilon_{ijk} c_{z, j}^{e_i} 
			\unitvec{e}_k \qquad
			& &\forall z \in \Delta_0(K), \\
			\int_{f} w_c \cdot \unitvec{n}_f \d{s} &= 
			\sum_{e \in \Delta_1(f)}  \mathcal{O}(e, f) c_e \qquad
			& &\forall f \in \Delta_2(K),
		\end{alignat}	
	\end{subequations}
	where $\epsilon_{ijk}$ is the permutation symbol. 
	Moreover, for $f \in \Delta_2(K)$, let $w_f \in \mathcal{P}_3(f)$ be 
	the unique cubic polynomial satisfying
	\begin{alignat*}{2}
		D_f^{\alpha} w_f(z) &= \sum_{i=1}^{3} c_{z, i}^{\alpha} 
		(\unitvec{n}_f \cdot \unitvec{e}_i) \qquad 
		& &\forall |\alpha| \leq 1, \ \forall z \in \Delta_0(f), \\
		\ell_f(w_f) \d{s} &= 0. \qquad & &
	\end{alignat*}
	Thanks to \cite[Lemma 4.11]{FuGuzmanNeilan20}, there exists 
	$v \in V^{1, h}(K_A)$ satisfying
	\begin{alignat*}{2}
		D^{\alpha} v(z) \cdot \unitvec{e}_i &= c_{z, i}^{\alpha} 
		\qquad & &\forall |\alpha| \leq 1, 
		\forall z \in \Delta_0(K), \ i \in 1:3, \\
		\int_{e} v \cdot \unitvec{t}_e \d{s} &= c_{e} 
		\qquad & &\forall e \in \Delta_1(K), \\
		\int_{e} v \cdot \unitvec{n}_f \d{s} &= \int_{e} w_f \d{s} 
		\qquad & & \forall e \in \Delta_1(f), \ 
		\forall f \in \Delta_2(K), \\
		\int_{e} \curl v \d{s} &= \int_{e} w_{c} \d{s} 
		\qquad & &\forall e \in \Delta_1(K), \\
		\int_{f} (v \cdot \unitvec{n}_f) \kappa \d{s} 
		&= \int_{f} w_f \kappa \d{s} 
		\qquad & &\forall \kappa \in \mathcal{P}_1(f), 
		\ \forall f \in \Delta_2(K), \\
		\int_f \curl v \times \unitvec{n}_f \d{s} 
		&= \int_f w_{c} \times \unitvec{n}_f \d{s}
		\qquad & &\forall f \in \Delta_2(K), \\
		a_K(v, \grad \phi) &= 0 
		\qquad & &\forall \phi \in V^{0, h}_0(K_A), \\
		\int_{K}  \curl v \cdot u \d{x} 
		&= \int_{K} w_c \cdot u \d{x}
		\qquad & &\forall u \in 
		\curl V^{1, h}_0(K_A).
	\end{alignat*} 
	By construction, we have
	\begin{alignat*}{2}
		D^{\alpha} v(z) \cdot \unitvec{e}_i &= c_{z, i}^{\alpha} 
		\qquad & & \forall|\alpha| \leq 1, \ \forall z \in \Delta_0(K), 
		\ i \in 1:3, \\
		\int_{e} v \cdot \unitvec{t}_e \d{s} &= c_e
		\qquad & & \forall e \in \Delta_1(K),
	\end{alignat*}
	so it remains to show that 
	$v \in \tilde{V}^{1, h}(K_A)$. We first show that 
	\begin{align*}
		v \cdot \unitvec{n}_f|_{f} \in \mathcal{P}_3(f) 
		\quad \text{and} \quad \ell_f(v \cdot \unitvec{n}_f|_{f}) = 0
		\qquad \forall f \in \Delta_2(K).
	\end{align*}
	Let $f \in \Delta_2(K)$. Note that by construction, we have 
	$u_f := v \cdot \unitvec{n}_f - w_f \in \mathcal{P}_{3}(f)$
	satisfies \cref{eq:proof:Pp1-face-dofs} with $p=3$, and so 
	$v \cdot \unitvec{n}_f = w_f$ on $f$.	 
	Analogous arguments show that $\curl v = w_c \in \GN(K_A)$. 
	Thus, $v \in \tilde{V}^{1, h}(K_A)$. \\	  
	
	\noindent \textbf{Step 3: Inclusion of quadratics. }
	Note that if $v \in \mathcal{P}_2(K)^3$, then
	$\ell_f(v \cdot \unitvec{n}_f) = 0$ for all $f \in \Delta_2(K)$
	by \cref{eq:kill-cubic-bubble-dof}, 
	$\curl v \in \mathcal{P}_1(K)^3 \subset \GN(K_A)$,
	and $a_K(v, \grad \phi) = 0$ for all $\phi \in V^{0, h}_0(K_A)$
	by \cref{eq:kill-h2-interior-bilinear-kill-p2}; therefore,
	$\mathcal{P}_2(K)^3 \subset \tilde{V}^{1, h}(K_A)$.
\end{proof}

\subsection{Global spaces and complexes}

We begin with a simple result showing that reducing the degree of 
normal components on faces also reduces the degree of the normal components 
on edges.
\begin{lemma}
	\label{lem:h2h1curl-edge-normal-degree-reduction}
	Let $K \in \mesh$. For $v \in \tilde{V}^{0, h}(K_A)$ and 
	$w \in \tilde{V}^{1, h}(K_A)$, there holds 
	\begin{align}
		\grad v|_e \times \unitvec{t}_e \in \mathcal{P}_3(e)
		\quad \text{and} \quad 
		w|_e \times \unitvec{t}_e \in \mathcal{P}_3(e) 
		\qquad \forall e \in \Delta_1(K).
	\end{align}
\end{lemma}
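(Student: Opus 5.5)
The claim is that, on each edge, the normal part of $\grad v$ (resp. $w$), i.e.\ the component orthogonal to $\unitvec{t}_e$, is a cubic even though a priori it lies in $\mathcal{P}_{p+1}(e)^3$ (resp. $\mathcal{P}_{p+1}(e)^3$) with $p=3$. The plan is to decompose the normal plane of $e$ using the two faces of $K$ that share $e$, and read each normal component off as a face trace that the definition of the local spaces already controls.

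Fix $e \in \Delta_1(K)$ and let $f_1, f_2 \in \Delta_2(K)$ be the two faces of $K$ containing $e$. Let $\unitvec{\nu}_i := \unitvec{n}_{f_i} \times \unitvec{t}_e$ for $i=1,2$, the in-plane normal to $e$ inside $f_i$. The plane orthogonal to $\unitvec{t}_e$ is spanned by $\unitvec{n}_{f_1}$ and $\unitvec{n}_{f_2}$, since the faces are not coplanar. So it suffices to show that $\grad v \cdot \unitvec{n}_{f_i}|_e \in \mathcal{P}_3(e)$ and $w \cdot \unitvec{n}_{f_i}|_e \in \mathcal{P}_3(e)$ for $i=1,2$. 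Writing $u \times \unitvec{t}_e$ in terms of $u\cdot \unitvec{n}_{f_1}$ and $u\cdot\unitvec{n}_{f_2}$ is a fixed linear combination with constant coefficients, so the degree bound then transfers to $u\times\unitvec{t}_e$.

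Now $\grad v \cdot \unitvec{n}_{f_i}|_e = (\partial_n v|_{f_i})|_e$. By the definition \cref{eq:local-h2-space-def}, $\partial_n v|_{f_i} \in \mathcal{P}_3(f_i)$, and its restriction to the edge $e \subset \partial f_i$ lies in $\mathcal{P}_3(e)$. This uses that $\grad v$ is continuous on $\bar K$ (it lies in $\CG^{p+1}(K_A)^3$), so the face trace and the edge trace agree. Similarly, $w \cdot \unitvec{n}_{f_i}|_{f_i} \in \mathcal{P}_3(f_i)$ by \cref{eq:local-h1curl-space-def}, and $w$ is continuous, so its restriction to $e$ is in $\mathcal{P}_3(e)$.

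The only points needing care are the two just used: the spanning claim, which holds because the two faces of a nondegenerate tetrahedron through $e$ have linearly independent normals both orthogonal to $\unitvec{t}_e$, and the continuity that makes the face traces agree on the shared edge. I expect neither to be a real obstacle. The functionals $\ell_f$, the bilinear form $a_K$, and the condition $\curl w \in \GN(K_A)$ are not needed here.
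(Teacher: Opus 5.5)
Your proof is correct and follows the paper's argument: the two faces of $K$ sharing $e$ have normals spanning the plane orthogonal to $\unitvec{t}_e$, and the definitions of $\tilde{V}^{0,h}(K_A)$ and $\tilde{V}^{1,h}(K_A)$ make the corresponding normal components cubic on each face, hence on $e$. You spell out the continuity and dual-basis details that the paper leaves implicit, but the route is the same.
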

\begin{proof}
	Let $e \in \Delta_1(f) \cap \Delta_1(f')$ for 
	two distinct faces $f, f' \in \Delta_2(K)$.
	Since the normal derivatives 
	$\partial_{\unitvec{n}_f} v|_f$ and 
	$\partial_{\unitvec{n}_f} v|_{f'}$ are both cubic
	and $\{ \unitvec{n}_f, \unitvec{n}_{f'} \}$ form a basis for 
	$\mathbb{R}^3 \times \unitvec{t}_e$, we have
	$\grad v|_e \times \unitvec{t}_e \in \mathcal{P}_3(e)$.
	Similar arguments show that 
	$w|_e \times \unitvec{t}_e \in \mathcal{P}_3(e)$.
\end{proof}

We now proceed space-by-space in \cref{eq:stokes-complex-reduced-alfeld-bcs}
and show that the global spaces \cref{eq:min-h2h1curl-space} defined in terms
of the local spaces 
\cref{eq:local-h2-space-def,eq:local-h1curl-space-def}
properly glue together.

\begin{lemma}
	\label{lem:min-h2-space-dofs-bcs}
	The space $\tilde{V}^{0, h}_{\Gamma_0}$ is unisolvent 
	with respect to the degrees of freedom 
	\begin{subequations}
		\label{eq:min-h2-space-dofs-bcs}
		\begin{alignat}{2}
			&D^{\alpha} v(z) \qquad 
			& &\forall |\alpha| \leq 2, \ \forall z \in \Delta_0(\mesh) 
			\setminus \Delta_0(\boundarymesh), \\
			&\partial_{\unitvec{n}_{\Gamma}}^2 v(z) \qquad 
			& &\forall z \in \Delta_0^{\flat}(\boundarymesh),
		\end{alignat}
	\end{subequations}
	and $\dim \tilde{V}^{0, h}_{\Gamma_0} = 
	10(|\Delta_0(\mesh)| - |\Delta_0(\boundarymesh)|)
	+ |\Delta_0^{\flat}(\boundarymesh)|$.
\end{lemma}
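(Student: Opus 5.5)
The plan is to work in two stages: first show that the global space $\tilde{V}^{0,h}$ (no boundary conditions) is unisolvent with respect to the vertex degrees of freedom \cref{eq:walkington-dofs}, and then identify which of these degrees of freedom are forced to vanish by the condition $v \in V^0_{\Gamma_0}$. The second stage uses \cref{lem:vertex-trace-bcs}.

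\textbf{Stage 1 (no boundary conditions).} Given arbitrary values $c_z^\alpha$, $|\alpha|\le 2$, at each $z \in \Delta_0(\mesh)$, I define $v$ elementwise as the unique element of $\tilde{V}^{0,h}(K_A)$ with these values; existence and uniqueness come from \cref{lem:walkington-dofs-unisolvent}. The main obstacle is showing that this $v$ lies in $V^{0,h}$. Vertex $C^2$ continuity holds by construction, so it remains to show that $v$ and $\grad v$ are continuous across each interior face $f = \bar K\cap\bar K'$.

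\begin{itemize}
\item \emph{Trace of $v$.} On $f$, $v|_f\in\mathcal{P}_{5}(f)$ is determined by the data. Following the argument in the proof of \cite[Lemma 2.3]{Walkington14} cited in Step 1 of \cref{lem:walkington-dofs-unisolvent}, the tangential derivative along each edge and the edge-normal data are fixed by the vertex values. In particular, \cref{lem:h2h1curl-edge-normal-degree-reduction} gives $\grad v|_e\times\unitvec t_e\in\mathcal{P}_3(e)$, so this quantity is pinned down by Hermite data at the endpoints. Hence $v|_f$ is fixed by vertex data alone, and the two sides agree.
\item \emph{Normal derivative.} On each side, $\partial_n v|_f$ is the unique cubic with prescribed $D_f^\alpha$ values, $|\alpha|\le1$, at the three vertices and with $\ell_f(\cdot)=0$. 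The cubic is unique because $\ell_f$ kills $\mathcal{P}_2(f)$ and the remaining direction is $b_f$, with $\ell_f(b_f)\neq0$. This requires $\ell_f$ to be chosen once per face, independent of $K$; I will note this assumption. The normal derivatives therefore agree.
\item \emph{Tangential gradient.} This matches because it is the surface gradient of the common trace $v|_f$.
\end{itemize}

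Thus $v\in\tilde V^{0,h}$. Uniqueness is immediate from local unisolvence, so $\dim\tilde V^{0,h}=10|\Delta_0(\mesh)|$.

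\textbf{Stage 2 (boundary conditions).} Let $v\in\tilde V^{0,h}_{\Gamma_0}$. By \cref{lem:vertex-trace-bcs}:
\begin{itemize}
\item at vertices in $\Delta_0(\boundarymesh)\setminus\Delta_0^\flat(\boundarymesh)$, all ten degrees of freedom vanish;
\item at flat vertices, all degrees of freedom vanish except $\partial^2_{\unitvec n_\Gamma}v(z)$.
\end{itemize}
So if the listed degrees of freedom \cref{eq:min-h2-space-dofs-bcs} vanish, every global degree of freedom vanishes and $v=0$ by Stage 1.

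For surjectivity, take data for \cref{eq:min-h2-space-dofs-bcs}, extend it by the values forced by \cref{lem:vertex-trace-bcs} (zero, except the Hessian $c\,\unitvec n_\Gamma\otimes\unitvec n_\Gamma$ at flat vertices), and build $v\in\tilde V^{0,h}$ by Stage 1. It then remains to check $v|_f=0$ and $\partial_n v|_f=0$ on each $f\in\boundarymesh$.

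\begin{itemize}
\item \emph{Trace.} $v|_f$ is determined by the vertex values of $v$ and by its tangential first and second derivatives at the vertices of $f$. These all vanish, since $(I-\unitvec n_f\otimes\unitvec n_f)$ annihilates the prescribed Hessian and $\unitvec n_f=\unitvec n_\Gamma$ on a flat patch. Hence $v|_f=0$.
\item \emph{Normal derivative.} $\partial_n v|_f$ is the cubic with vanishing value and vanishing surface gradient at the vertices; the mixed tangential–normal Hessian entries are zero. Together with $\ell_f=0$ this forces $\partial_nv|_f=0$.
\end{itemize}

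Counting then gives $10(|\Delta_0(\mesh)|-|\Delta_0(\boundarymesh)|)+|\Delta_0^\flat(\boundarymesh)|$.
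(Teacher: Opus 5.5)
Your proposal is correct and is essentially the paper's proof. Injectivity comes from \cref{lem:vertex-trace-bcs} together with local unisolvence (\cref{lem:walkington-dofs-unisolvent}). Surjectivity comes from gluing the local interpolants, with Hessian $d_{z}\,\unitvec{n}_{\Gamma}\otimes\unitvec{n}_{\Gamma}$ at flat vertices, and then checking each interior face and each face of $\boundarymesh$ in two ways: Bell-type unisolvence of the quintic trace (via \cref{lem:h2h1curl-edge-normal-degree-reduction}), and cubic-Hermite-plus-$\ell_f$ unisolvence of the normal derivative.

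The only differences are organizational. You first treat $\Gamma_0=\emptyset$ and then impose the boundary conditions, whereas the paper handles interior and boundary faces in one pass. You also state explicitly that $\ell_f$ must depend only on $f$ and not on the element, which the paper uses tacitly.
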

\begin{proof}
	\textbf{Step 1: Upper bound. }
	Suppose that $v \in \tilde{V}^{0, h}_{\Gamma_0}$ and \cref{eq:min-h2-space-dofs-bcs}
	vanish. Thanks to \cref{lem:vertex-trace-bcs}, $D^{\alpha} v(z) = 0$
	for all $|\alpha| \leq 2$ and $z \in \Delta_0(\mesh)$. Thus,
	on each cell $K \in \mesh$, the degrees of freedom 
	\cref{eq:walkington-dofs} with $\mesh = K_A$ vanish, and so $v|_K \equiv 0$
	by \cref{lem:walkington-dofs-unisolvent}. Thus, 
	$\dim \tilde{V}^{0, h}_{\Gamma_0} \leq 
	10(|\Delta_0(\mesh)| - |\Delta_0(\boundarymesh)|)
	+ |\Delta_0^{\flat}(\boundarymesh)|$. \\
	
	\noindent \textbf{Step 2: Lower bound. } Suppose we are given values for 
	\cref{eq:min-h2-space-dofs-bcs}: 
	\begin{align}
		\label{eq:proof:min-h2-space-dofs-bcs}
		\{c_z^{\alpha}, d_{z'} : |\alpha| \leq 2, 
		\ z \in \Delta_0(\mesh) \setminus \Delta_0(\boundarymesh), 
		\ z' \in \Delta_0^{\flat}(\boundarymesh)\}.
	\end{align}
	For $K \in \mesh$, define $v_K \in \tilde{V}^{0, h}(K_A)$ by 
	\begin{alignat*}{2}
		D^{\alpha} v_K(z) &= c_z^{\alpha} \qquad 
		& &\forall |\alpha| \leq 2, \ \forall z \in \Delta_0(K)
		\setminus \Delta_0(\boundarymesh), \\
		\hess v_K(z') &= d_{z'} \unitvec{n}_{\Gamma} \otimes 
		\unitvec{n}_{\Gamma} 
		\qquad 
		& &\forall z' \in \Delta_0(K) \cap \Delta_0^{\flat}(\boundarymesh),
	\end{alignat*}
	with all remaining degrees of freedom in \cref{eq:walkington-dofs} with 
	$\mesh = K_A$ set to 0.
	Let $v \in L^2(\Omega)$ be defined by $v|_K := v_K$ for all 
	$K \in \mesh$. 
	
	We now show that $v \in \tilde{V}^{0, h}_{\Gamma_0}$.
	Suppose that $f \in \Delta_2(\mesh)$ satisfies either (i) there exist 
	distinct
	$K, K' \in \mesh$ with $f \in \Delta_2(K) \cap \Delta_2(K')$
	or (ii) $f \in \boundarymesh$ and there exists a 
	unique $K_f \in \mesh$ with $f \in \Delta_2(K)$. 
	Define $v_f \in \mathcal{P}_5(f)$
	and $u_f \in \mathcal{P}_3(f)$ by 
	(i) $v_f := v_K|_f - v_{K'}|_f$ and 
	$u_f := \partial_{\unitvec{n}_f} v_K|_f - \partial_{\unitvec{n}_f} v_K'|_f$
	or 
	(ii) $v_f := v_{K_f}|_f$ and $u_f := \partial_{\unitvec{n}_f} v_{K_f}|_f$. 
	
	In both cases, we have that $v_f$ belongs to the 
	2D Bell finite element space thanks to 
	\cref{lem:h2h1curl-edge-normal-degree-reduction}
	and $D_f^{\alpha} v_f(z) = 0$ for all $z \in \Delta_0(f)$.
	These degrees of freedom are unisolvent \cite[Theorem 2.2.12]{Ciarlet02},
	and so $v_f \equiv 0$. Also in both cases,
	\begin{align*}
		D_f^{\beta} u_f(z) = 0 \qquad \forall |\beta| \leq 1, 
		\ \forall z \in \Delta_0(f) 
		\quad \text{and} \quad 
		\ell_f(u_f) = 0.
	\end{align*}
	These degrees of freedom are clearly unisolvent on $\mathcal{P}_3(f)$,
	and so $u_f \equiv 0$. Thus, $v \in \tilde{V}^{0, h}_{\Gamma_0}$
	and the degrees of freedom of $v$ in \cref{eq:min-h2-space-dofs-bcs}
	match \cref{eq:proof:min-h2-space-dofs-bcs}, completing the proof.
\end{proof}

\begin{lemma}
	\label{lem:min-h1curl-space-dofs-bcs}
	The space $\tilde{V}^{1, h}_{\Gamma_0}$ is unisolvent with respect to the 
	degrees of freedom 
	\begin{subequations}
		\label{eq:min-h1curl-space-dofs-bcs}
		\begin{alignat}{2}
			&D^{\alpha} v(z) \qquad 
			& &\forall |\alpha| \leq 1, \ \forall z \in \Delta_0(\mesh) 
			\setminus \Delta_0(\boundarymesh), \\
			&\int_e v \cdot \unitvec{t}_e \d{s} \qquad 
			& &\forall e \in \Delta_1(\mesh) \setminus \Delta_1(\boundarymesh), 
			\\ 
			&\partial_{\unitvec{n}_{\Gamma}} (v \cdot \unitvec{n}_{\Gamma})(z) 
			\qquad 
			& &\forall z \in \Delta_0^{\flat}(\boundarymesh),
		\end{alignat}
	\end{subequations}
	and $\dim \tilde{V}^{1, h}_{\Gamma_0} = 
	12(|\Delta_0(\mesh)| - |\Delta_0(\boundarymesh)|)
	+ |\Delta_1(\mesh)| - |\Delta_1(\boundarymesh)|
	+ |\Delta_0^{\flat}(\boundarymesh)|$.
\end{lemma}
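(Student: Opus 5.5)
The plan follows the two-step structure of the proof of \cref{lem:min-h2-space-dofs-bcs}: an upper bound from local unisolvence, then a lower bound from local construction and gluing.

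\textbf{Step 1: Upper bound.} Take $v \in \tilde{V}^{1, h}_{\Gamma_0}$ with all degrees of freedom in \cref{eq:min-h1curl-space-dofs-bcs} equal to zero. By \cref{lem:vertex-trace-bcs}, specifically \cref{eq:vertex-trace-bcs-h1curl}, we have $v(z)=0$ and $\grad v(z)=0$ at every $z \in \Delta_0(\boundarymesh)$. At flat vertices the only surviving component is $\partial_{\unitvec{n}_\Gamma}(v\cdot\unitvec{n}_\Gamma)(z)$, which vanishes by assumption. Moreover, $\int_e v\cdot\unitvec{t}_e\d{s} = 0$ for $e \in \Delta_1(\boundarymesh)$, since $v$ has zero tangential trace on $\Gamma_0$. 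Hence the local degrees of freedom \cref{eq:h1curl-dofs} vanish on every $K_A$, and \cref{lem:h1curl-local-unisolvence} gives $v|_K\equiv 0$. This yields the dimension upper bound.

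\textbf{Step 2: Lower bound.} Prescribe arbitrary values: $c_z^\alpha$ at interior vertices (vector-valued, 12 per vertex), $c_e$ on interior edges, and $d_{z'}$ at flat boundary vertices. On each $K$, define $v_K \in \tilde{V}^{1,h}(K_A)$ by \cref{lem:h1curl-local-unisolvence} as follows. At interior vertices, take $D^\alpha v_K(z)=c_z^\alpha$. At flat boundary vertices, take $v_K(z')=0$ and $\grad v_K(z') = d_{z'}\unitvec{n}_\Gamma\otimes\unitvec{n}_\Gamma$. At all other boundary vertices, set the vertex data to zero. On edges, take $\int_e v_K\cdot\unitvec{t}_e\d{s}=c_e$ for interior edges and $0$ for boundary edges. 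Then define $v|_K := v_K$.

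It remains to show that $v$ lies in $V^{1,h}$, i.e.\ that $v$ and $\curl v$ are continuous, $C^1$ continuity holds at vertices, and the traces vanish on $\Gamma_0$. $C^1$ continuity at vertices holds by construction, since all vertex data are assigned globally.

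\textbf{Step 3: Face traces.} Fix a face $f$ that is either interior, shared by $K$ and $K'$, or a boundary face in $\boundarymesh$. Let $w$ be the jump $v_K - v_{K'}$, or $v_{K_f}$ itself in the boundary case. The vertex values and first derivatives of $w$ vanish at the vertices of $f$, because the global vertex data agree. For boundary faces at flat vertices this uses that $\unitvec{n}_\Gamma\otimes\unitvec{n}_\Gamma$ is annihilated by tangential derivatives along $f$, and at non-flat vertices the data are zero. The tangential edge integrals of $w$ also vanish.

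The normal component $w\cdot\unitvec{n}_f|_f$ is a cubic with vanishing $C^1$ vertex data and $\ell_f(w\cdot\unitvec{n}_f)=0$, hence it is zero.

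The tangential part $w\times\unitvec{n}_f|_f$ is the main obstacle. I would show it vanishes by exploiting the gradient/curl structure inherited from $\tilde{V}^{1,h}(K_A)$. First, $\curl w$ restricted to $f$ belongs to the trace of $\GN$ on $f$. Its vertex values vanish, since they are determined by $\grad w$ at the vertices. Its face flux vanishes by Stokes, since the edge tangential integrals are zero. By the face degrees of freedom \cref{eq:gn-dofs} of $\GN$, which determine the trace on $f$ from vertex values and the normal flux of $f$, the trace of $\curl w$ on $f$ is zero. This gives $\rot_f$ of the tangential part equal to zero. The tangential part is therefore a surface gradient $\grad_f\phi$ with $\phi$ a quintic on $f$. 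By \cref{lem:h2h1curl-edge-normal-degree-reduction}, $\phi$ lies in the Bell-type space of \cref{lem:min-h2-space-dofs-bcs}. Its $C^2$ data at the vertices vanish: the first derivatives vanish from the vertex data, and $\phi$ is constant at the vertices because the edge integrals are zero. Bell unisolvence then gives $\phi$ constant, so the tangential part vanishes. Thus $w|_f=0$ and $\curl w|_f=0$.

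Consequently $v\in\tilde{V}^{1,h}_{\Gamma_0}$ with the prescribed degrees of freedom, and counting the degrees of freedom gives the stated dimension.
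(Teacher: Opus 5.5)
Your proposal is correct and follows essentially the same route as the paper. The upper bound comes from \cref{lem:vertex-trace-bcs} together with local unisolvence (\cref{lem:h1curl-local-unisolvence}). The lower bound glues the local elements and checks each interior face or face of $\boundarymesh$ in three pieces: unisolvence on the $\GN$ face trace $\mathcal{P}_1(f)^3\oplus\spann\{b_f\unitvec{n}_f\}$ for the curl jump, cubic unisolvence with $\ell_f$ for the normal component, and the Bell-element argument for the tangential surface-gradient part. You only reorder these, treating the normal component before the curl.

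One small point should be stated explicitly: at a flat vertex of a face in $\boundarymesh$, $\curl w(z)=0$ holds because $\grad w(z)=d_{z}\unitvec{n}_\Gamma\otimes\unitvec{n}_\Gamma$ is symmetric, not because $\grad w(z)$ vanishes.
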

\begin{proof}
	\textbf{Step 1: Upper bound. }
	Suppose that $v \in \tilde{V}^{1, h}_{\Gamma_0}$ and 
	\cref{eq:min-h1curl-space-dofs-bcs}
	vanish. Thanks to \cref{lem:vertex-trace-bcs}, 
	on each cell $K \in \mesh$, the degrees of freedom 
	\cref{eq:h1curl-dofs} with $\mesh = K_A$ vanish, and so $v|_K \equiv 0$
	by \cref{lem:h1curl-local-unisolvence}. Thus, the dimension count
	in the statement of the lemma is an upper bound. \\
	
	\noindent \textbf{Step 2: Lower bound. } Suppose we are given values for 
	\cref{eq:min-h1curl-space-dofs-bcs}: 
	\begin{align}
		\label{eq:proof:min-h1curl-space-dofs-bcs}
		\{c_z^{\alpha}, c_e, d_{z'} : |\alpha| \leq 1, 
		\ z \in \Delta_0(\mesh) \setminus \Delta_0(\boundarymesh), 
		\ e \in \Delta_1(\mesh) \setminus \Delta_1(\boundarymesh),
		\ z' \in \Delta_0^{\flat}(\boundarymesh)\}.
	\end{align}
	For $K \in \mesh$, define $v_K \in \tilde{V}^{1, h}(K_A)$ by 
	\begin{alignat*}{2}
		D^{\alpha} v_K(z) &= c_z^{\alpha} \qquad 
		& &\forall |\alpha| \leq 1, \ \forall z \in \Delta_0(K)
		\setminus \Delta_0(\boundarymesh), \\
		\int_e v_K \cdot \unitvec{t}_e \d{s} &= c_e \qquad 
		& &\forall e \in \Delta_1(K)
		\setminus \Delta_1(\boundarymesh), \\
		\grad v_K(z') &= d_{z'} \unitvec{n}_{\Gamma} \otimes \unitvec{n}_{\Gamma}
		\qquad 
		& &\forall z' \in \Delta_0(K) \cap \Delta_0^{\flat}(\boundarymesh),
	\end{alignat*}
	with all remaining degrees of freedom  
	in \cref{eq:h1curl-dofs} with $\mesh = K_A$ set to 0.
	Let $v \in L^2(\Omega)^3$ be defined by $v|_K := v_K$ for all 
	$K \in \mesh$. 
	
	We now show that $v \in \tilde{V}^{1, h}_{\Gamma_0}$.
	Suppose that $f \in \Delta_2(\mesh)$ and $K, K', K_f \in \mesh$ are as in 
	Step 2 of the proof of \cref{lem:min-h2-space-dofs-bcs}. 
	Define $v_f \in \mathcal{P}_4(f)^3$
	and $u_f \in \mathcal{P}_3(f)^3$ by (i) $v_f := v_K|_f - v_{K'}|_f$ and 
	$u_f := \curl v_K|_f - \curl v_K'|_f$ or 
	(ii) $v_f := v_{K_f}|_f$ and $u_f := \curl v_{K_f}|_f$. We treat both 
	cases simultaneously.
	
	Since $\curl v \in \GN(K_A)$,
	$u_f \in \mathcal{P}_1(f)^3 \oplus \spann\{ b_f \unitvec{n}_f \}$,
	where we recall that $b_f$ is the cubic bubble 
	function on $f$. Clearly, we have $\rot_f v = u_f \cdot \unitvec{n}_f$,
	and so
	\begin{align*}
		u_f(z) = 0 \qquad \forall z \in \Delta_0(f)
		\quad \text{and} \quad 
		\int_f u_f \cdot \unitvec{n}_f \d{s} 
		= \sum_{e \in \Delta_1(f)} \mathcal{O}(e, f) 
		\int_{e} v \cdot \unitvec{t}_{e} \d{s} = 0,
	\end{align*}
	which are unisolvent on 
	$\mathcal{P}_1(f)^3 \oplus \spann\{ b_f \unitvec{n}_f \}$, and so 
	$u_f \equiv 0$.
	
	Turning to $v_f$, we have $v_f \cdot \unitvec{n}_f \in \mathcal{P}_3(f)$
	by definition. Since 
	\begin{align*}
		D_f^{\alpha} (v_f \cdot \unitvec{n}_f)(z) = 0
		\qquad \forall |\alpha| \leq 1, \ \forall z \in \Delta_0(f) 
		\quad \text{and} \quad 
		\ell_f(v_f \cdot \unitvec{n}_f) = 0
	\end{align*}
	and the above degrees of freedom are unisolvent on $\mathcal{P}_3(f)$,
	$v_f \cdot \unitvec{n}_f \equiv 0$. 
	
	We now show that $v|_{\partial f} \equiv 0$.
	As $\rot_f v_f \equiv 0$ and 
	$v_f \cdot \unitvec{n}_f \equiv 0$, de Rham's theorem 
	(see e.g. \cite[p. 31, Theorem 2.9]{GiraultRaviart86}) 
	shows that there exists $\phi \in H^1(f)$ such that 
	$v_f = \grad_f \phi$. Clearly, $\phi \in \mathcal{P}_5(f)$,
	and more specifically, $\phi$ belongs to the Bell 
	finite element space thanks to 
	\cref{lem:h2h1curl-edge-normal-degree-reduction}. 
	Arguing as in the proof of 
	\cref{lem:h1curl-local-unisolvence}, we may choose $\phi$ to vanish 
	at the vertices of $f$, and so all derivatives up to order 2
	of $\phi$ vanish on $\Delta_0(f)$. Arguing as in the 
	proof of \cref{lem:min-h2-space-dofs-bcs}, we have $\phi \equiv 0$
	and so $v \equiv 0$.
	Thus, $v \in \tilde{V}^{1, h}_{\Gamma_0}$
	and the degrees of freedom of $v$ in \cref{eq:min-h1curl-space-dofs-bcs}
	match \cref{eq:min-h1curl-space-dofs-bcs}. As a result, 
	the dimension count
	in the statement of the lemma is a lower bound.	
\end{proof}

Standard arguments also show that $\tilde{V}_{\Gamma_0}^{2, h}$ may be
characterized similarly by omitting degrees of freedom associated to 
$\Delta(\boundarymesh)$:
\begin{lemma}
	\label{lem:min-h1-space-dofs-bcs}
	The space $\tilde{V}^{2, h}_{\Gamma_0}$ is unisolvent with respect to the 
	degrees of freedom 
	\begin{subequations}
		\label{eq:min-h1-space-dofs-bcs}
		\begin{alignat}{2}
			&v(z) \qquad 
			& &\forall |\alpha| \leq 1, \ \forall z \in \Delta_0(\mesh) 
			\setminus \Delta_0(\boundarymesh), \\
			&\int_f v \cdot \unitvec{n}_f \d{s} \qquad 
			& &\forall e \in \Delta_2(\mesh) \setminus \Delta_2(\boundarymesh), 
		\end{alignat}
	\end{subequations}
	and $\dim \tilde{V}^{2, h}_{\Gamma_0} = 
	3(|\Delta_0(\mesh)| - |\Delta_0(\boundarymesh)|)
	+ |\Delta_2(\mesh)| - |\Delta_2(\boundarymesh)|$.
\end{lemma}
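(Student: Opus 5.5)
The argument runs parallel to the proofs of \cref{lem:min-h2-space-dofs-bcs,lem:min-h1curl-space-dofs-bcs}, but is simpler. The Guzm\'{a}n--Neilan space carries no supersmoothness at vertices, so \cref{lem:vertex-trace-bcs} is not needed. The degrees of freedom in \cref{eq:min-h1-space-dofs-bcs} are the vertex values $v(z)$ at interior and Neumann vertices and the face fluxes on faces not in $\boundarymesh$. These are exactly the global degrees of freedom \cref{eq:gn-dofs} of \cite[Lemma 4.3]{GuzmanNeilan18}, with those attached to $\Delta_0(\boundarymesh)$ and $\boundarymesh$ removed. We establish the dimension as an upper and a lower bound.

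\textbf{Upper bound.} Let $v \in \tilde{V}^{2, h}_{\Gamma_0}$ have all degrees of freedom in \cref{eq:min-h1-space-dofs-bcs} equal to zero. Since $v \in H^1_{\Gamma_0}(\Omega)^3$ is continuous and vanishes on $\Gamma_0$:
\begin{itemize}
\item $v(z) = 0$ for every $z \in \Delta_0(\boundarymesh)$;
\item $\int_f v \cdot \unitvec{n}_f \d{s} = 0$ for every $f \in \boundarymesh$.
\end{itemize}
Hence all degrees of freedom \cref{eq:gn-dofs} of $v$ vanish, and unisolvence on $\GN(\mesh)$ \cite[Lemma 4.3]{GuzmanNeilan18} gives $v \equiv 0$. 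The claimed count is therefore an upper bound.

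\textbf{Lower bound.} Prescribe arbitrary values for \cref{eq:min-h1-space-dofs-bcs}, set the remaining degrees of freedom in \cref{eq:gn-dofs} to zero, and let $v \in \GN(\mesh)$ be the unique function with these values. It remains to check that $v|_f \equiv 0$ for each $f \in \boundarymesh$.

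Fix such an $f$ and let $K$ be the tetrahedron with $f \in \Delta_2(K)$. By \cref{eq:gn-local} and the trace property $S_K v|_{\partial K} = v|_{\partial K}$ in \cref{eq:generic-gn-extension-operator}, the restriction $v|_f$ lies in $\mathcal{P}_1(f)^3 \oplus \spann\{ b_{f'} \unitvec{n}_{f'}|_f : f' \in \Delta_2(K) \}$. For $f' \neq f$, the bubble $b_{f'}$ vanishes on $f$, so $v|_f \in \mathcal{P}_1(f)^3 \oplus \spann\{ b_f \unitvec{n}_f \}$.

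The vertex values $v(z)$, $z \in \Delta_0(f)$, vanish because $\Delta_0(f) \subset \Delta_0(\boundarymesh)$. The flux $\int_f v \cdot \unitvec{n}_f \d{s}$ vanishes by construction. These vertex values and this flux are unisolvent on $\mathcal{P}_1(f)^3 \oplus \spann\{ b_f \unitvec{n}_f\}$: this is the same unisolvence already used for $u_f$ in the proof of \cref{lem:min-h1curl-space-dofs-bcs}, and it relies on $\int_f b_f \d{s} = |f| \neq 0$. Hence $v|_f \equiv 0$, so $v \in \tilde{V}^{2,h}_{\Gamma_0}$ and it realizes the prescribed values, proving the lower bound.

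The only point needing care is this face-restriction claim: that the trace of $\GN(K_A)$ on $f$ is spanned by linears and the single normal bubble $b_f \unitvec{n}_f$. It follows directly from the boundary-preserving property of $S_K$ together with the vanishing of $b_{f'}$ on $\partial K \setminus f'$. (The statement's index labels in \cref{eq:min-h1-space-dofs-bcs}, namely ``$|\alpha| \leq 1$'' and ``$e \in \Delta_2$'', should be read as ``$z$'' and ``$f$'' respectively.)
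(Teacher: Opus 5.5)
Your proof is correct and takes essentially the route the paper intends. The paper omits the proof, saying only that ``standard arguments'' characterize $\tilde{V}^{2,h}_{\Gamma_0}$ by discarding the degrees of freedom attached to $\Delta(\boundarymesh)$, exactly as in the two preceding lemmas; your upper bound via the global unisolvence of \cite[Lemma 4.3]{GuzmanNeilan18} and your lower bound via the face traces in $\mathcal{P}_1(f)^3 \oplus \spann\{ b_f \unitvec{n}_f \}$ supply those details. Using global Guzm\'{a}n--Neilan unisolvence directly, instead of gluing local elements as in the previous two lemmas, is a harmless simplification, and your reading of the index typos in the statement is the intended one.
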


\begin{remark}
	\label{rem:remove-dofs-to-get-bcs}
	A simple consequence of \cref{lem:min-h2-space-dofs-bcs,%
		lem:min-h1curl-space-dofs-bcs,lem:min-h1-space-dofs-bcs}
	and their proofs is that $v \in \tilde{V}^{k, h}$ belongs 
	to $\tilde{V}^{k, h}_{\Gamma_0}$, $k \in 0:2$, if and only if 
	\begin{alignat*}{2}
		D^{\alpha} v(z) &= 0 \qquad & &\forall |\alpha| \leq 2-k, 
		\ \forall z \in \Delta_0(\boundarymesh) 
		\setminus \Delta_0^{\flat}(\boundarymesh), \\
		D^{\alpha} v(z) &= 0 \qquad & &\forall |\alpha| \leq \max\{ 1-k, 0\}, 
		\ \forall z \in \Delta_0^{\flat}(\boundarymesh), \\
		(I - \unitvec{n}_{\Gamma} \otimes \unitvec{n}_{\Gamma}) \grad^{2-k} v(z)
		&= 0 \qquad & & \forall z \in \Delta_0^{\flat}(\boundarymesh), \\
		\mathcal{I}^k_{\tau}(v) &= 0 \qquad 
		& &\forall \tau \in \Delta_k(\boundarymesh),
	\end{alignat*}
	where $\grad^2 := \hess$, $\grad^0 := I$, and we recall that 
	$\mathcal{I}^k_{\tau}$ is defined in \cref{eq:whitney-dofs-currents}.
\end{remark}

\section{Cohomology of the ``minimal'' complexes}
\label{sec:cohomology-reduced}

We could proceed as in \cref{sec:cohomology-full-poly-space} and 
apply the framework in \cite{HuLiangLin25}. However, the degrees of 
freedom for the discrete spaces $\tilde{V}^{k, h}$ are simple enough 
that we can prove the key results directly. To this end, we define 
the skeletal spaces for $k \in 0:2$ by 
\begin{align*}
	\tilde{\mathcal{S}}^{k}_{\Gamma_0} &:= 
	\left\{ v \in \tilde{V}^{k, h}_{\Gamma_0} : D^{\alpha} v(z) = 0 
	\ \forall |\alpha| \in \max\{1-k, 0\} : 2-k, 
	\ \forall z \in \Delta_0(\mesh) \right\},
\end{align*}
and set $\tilde{\mathcal{S}}^3_{\Gamma_0} := \tilde{V}^{3, h}_{\Gamma_0}$. Owing 
to \cref{lem:min-h2-space-dofs-bcs,lem:min-h1curl-space-dofs-bcs,%
	lem:min-h1-space-dofs-bcs}, the ``currents" 
$\mathcal{I}^k_{\tau}(\cdot)$ for
${\tau \in \Delta_k(\mesh) \setminus \Delta_k(\boundarymesh)}$
defined in \cref{eq:whitney-dofs-currents} are a unisolvent set of 
degrees of freedom on $\tilde{\mathcal{S}}^{k}_{\Gamma_0}$.

We also define the lifted (modified) bubble spaces associated 
with each vertex $z \in \Delta_0(z)$ for $k \in 0:2$ by
\begin{multline*}
	\tilde{\mathbb{B}}^{k}_{\Gamma_0}(z) := 
	\left\{ v \in \tilde{V}^{k, h}_{\Gamma_0} :
	D^{\alpha} v(z') = 0 \ \forall |\alpha| \leq 2-k, 
	\ \forall z' \in \Delta_0(\mesh) \setminus \{z\} \right. \\
	\left. 
	\vphantom{v \in \tilde{V}^{k, h}_{\Gamma_0}}
	\text{ and } \mathcal{I}_{\tau}^k(v) = 0 
	\ \forall \tau \in \Delta_k(\mesh) \right\}.
\end{multline*}
Analogous to \cref{eq:stokes-complex-full-geom-decomp}, we have an 
associated diagram:
\begin{equation}
	\label{eq:stokes-complex-gn-geom-decomp}
	\begin{tikzcd}[ampersand replacement=\&]
		0 \arrow[r] 
		\& \tilde{V}^{0, h}_{\Gamma_0} \arrow[r, "\grad"] 
		\& \tilde{V}^{1, h}_{\Gamma_0} \arrow[r, "\curl"] 
		\& \tilde{V}^{2, h}_{\Gamma_0} \arrow[r, "\div"]
		\& \tilde{V}^{3, h}_{\Gamma_0} \arrow[r]
		\& 0 \\[-2em]
		\& \rotatebox{90}{=} \& \rotatebox{90}{=} \& \rotatebox{90}{=} \&
		\rotatebox{90}{=} \& \\[-2em]
		0 \arrow[r] 
		\& \tilde{\mathcal{S}}^{0}_{\Gamma_0} \arrow[r, "\grad"] 
		\& \tilde{\mathcal{S}}^{1}_{\Gamma_0} \arrow[r, "\curl"] 
		\& \tilde{\mathcal{S}}^{2}_{\Gamma_0} \arrow[r, "\div"]
		\& \tilde{\mathcal{S}}^{3}_{\Gamma_0} \arrow[r]
		\& 0 \\[-2em]
		\& \bigoplus\limits_{z \in \Delta_0(\mesh)} 
		\& \bigoplus\limits_{z \in \Delta_0(\mesh)} 
		\& \bigoplus\limits_{z \in \Delta_0(\mesh)} 
		\& 
		\& \\[-2em]
		0 \arrow[r] 
		\& \tilde{\mathbb{B}}^{0}_{\Gamma_0}(z) \arrow[r, "\grad"] 
		\& \tilde{\mathbb{B}}^{1}_{\Gamma_0}(z) \arrow[r, "\curl"] 
		\& \tilde{\mathbb{B}}^{2}_{\Gamma_0}(z) \arrow[r, "\div"]
		\& 0 \arrow[r]
		\& 0.
	\end{tikzcd}
\end{equation}
Each column is a direct sum decomposition thanks to 
\cref{lem:min-h2-space-dofs-bcs,lem:min-h1curl-space-dofs-bcs,%
	lem:min-h1-space-dofs-bcs}. The complex properties are summarized as follows.
\begin{lemma}
	Each row of \cref{eq:stokes-complex-gn-geom-decomp} is a complex,
	and the final row is exact. Moreover, the cohomologies of the first
	two rows are isomorphic.
\end{lemma}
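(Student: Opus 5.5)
The plan is to check the three claims in order: the skeletal row is a complex, the bubble rows are exact, and the decomposition splits the complex.

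\textbf{Skeletal row is a complex.} It suffices to show that $\dee^k$ maps $\tilde{\mathcal{S}}^k_{\Gamma_0}$ into $\tilde{\mathcal{S}}^{k+1}_{\Gamma_0}$. The global row is already a complex, because each local complex \cref{eq:stokes-complex-reduced-alfeld-element} is one and the boundary conditions are preserved by $\dee^k$.

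Take $v \in \tilde{\mathcal{S}}^0_{\Gamma_0}$. Then $\grad v$ and $\grad(\grad v) = \hess v$ vanish at every vertex, which is exactly the defining condition for $\tilde{\mathcal{S}}^1_{\Gamma_0}$. Take $v \in \tilde{\mathcal{S}}^1_{\Gamma_0}$. Then $\grad v(z)=0$, so $\curl v(z)=0$ at every vertex, which places $\curl v$ in $\tilde{\mathcal{S}}^2_{\Gamma_0}$. The case $k=2$ is trivial.

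\textbf{Bubble rows are exact.} I first show that $\dee^k$ maps $\tilde{\mathbb{B}}^k_{\Gamma_0}(z)$ into $\tilde{\mathbb{B}}^{k+1}_{\Gamma_0}(z)$. At a vertex $z' \neq z$ the vertex data vanish to the required order. The currents of $\dee^k v$ vanish by the Stokes formula \cref{eq:currents-stokes-thm}.

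Exactness is then proved one vertex at a time. By \cref{lem:min-h2-space-dofs-bcs,lem:min-h1curl-space-dofs-bcs,lem:min-h1-space-dofs-bcs}, the spaces are parametrized purely by vertex data.
\begin{itemize}
\item For an interior vertex $z \notin \Delta_0(\boundarymesh)$, the bubble spaces are determined by their vertex jets. These are $(v,\grad v,\hess v)$ with $v(z)=0$ forced by $\mathcal{I}^0_z$, then $(w,\grad w)$, then $u(z)$. This gives dimensions $9$, $12$, $3$, with $0$ in the last slot. On these jets the map $\dee^k$ acts as the vertex differential $\dee_z$ of the trace complex \cref{eq:full-spaces-trace-complex}.
\item For a boundary vertex in $\Delta_0(\boundarymesh)\setminus\Delta_0^\flat(\boundarymesh)$, all three spaces are zero.
\item For a flat vertex $z \in \Delta_0^\flat(\boundarymesh)$, the spaces are $1$-dimensional, $1$-dimensional and $0$, spanned respectively by $\partial^2_{\unitvec{n}_\Gamma}v(z)$ and $\partial_{\unitvec{n}_\Gamma}(w\cdot\unitvec{n}_\Gamma)(z)$. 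Here $\grad$ maps the first isomorphically onto the second, as in the earlier lemma.
\end{itemize}

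For the interior case, exactness reduces to linear algebra at the vertex:
\[
0 \to \mathbb{R}^3\oplus\mathbb{R}^{3\times3}_{\sym} \to \mathbb{R}^3\oplus\mathbb{R}^{3\times3} \to \mathbb{R}^3 \to 0,
\]
with maps $(a,H)\mapsto(a,H)$ and $(b,M)\mapsto \mathrm{vskw}(M)$ (up to sign). This sequence is exact: the kernel of $\mathrm{vskw}$ is exactly the symmetric matrices, the map onto $\mathbb{R}^3$ is surjective, and the dimension count $9-12+3=0$ confirms it.

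\textbf{Isomorphic cohomologies.} Because each column is a direct sum and $\dee$ respects the splitting, the complex is the direct sum of the skeletal complex and the bubble complexes. The bubble complexes are exact, so the inclusion of the skeletal row into the first row induces an isomorphism on cohomology.

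\textbf{Main obstacle.} The delicate step is justifying that $\dee$ respects the splitting. Concretely, one must show that the derivative of a skeletal function is skeletal, including that the lower-order vertex values of $\dee^k v$ vanish, and that the derivative of a bubble is a bubble with vanishing currents. This needs care with the convention that the skeletal spaces still allow nonzero lowest-order vertex values (vertex values for $k=0$, coming from the currents $\mathcal{I}^0_z$).
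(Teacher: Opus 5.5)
Your proposal is correct and follows essentially the same route as the paper. You show that $\dee^k$ preserves vanishing of the currents and vertex jets, which gives the complex property of the skeletal and bubble rows. You prove exactness of the bubble rows vertex by vertex, and the direct-sum splitting then transfers the cohomology to the skeletal row. The one difference is that your explicit identification of each bubble complex with its vertex-jet complex, $\mathbb{R}^3\oplus\mathbb{R}^{3\times 3}_{\sym}\to\mathbb{R}^3\oplus\mathbb{R}^{3\times 3}\to\mathbb{R}^3$ at interior vertices and the one-dimensional pair at flat vertices, supplies the injectivity and surjectivity that the paper leaves to ``standard arguments'' after its dimension count.
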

\begin{proof}
	Note that the degrees of freedom satisfy the following property:
	If $v \in \tilde{V}_{\Gamma_0}^{h, k}$, $k \in 0:2$, then 
	\begin{align*}
		\mathcal{I}^{k}_{\tau}(v) = 0 \qquad \forall \tau \in \Delta_{k}(\mesh)
		\implies \mathcal{I}^{k+1}_{\eta}(\dee^k v) = 0 
		\qquad \forall \eta \in \Delta_{k+1}(\mesh),
	\end{align*}
	and similarly for any $z \in \Delta_0(\mesh)$, there holds 
	\begin{align*}
		D^{\alpha} v(z) = 0 \qquad \forall |\alpha| \in \max\{1-k, 0\}:2-k
		\implies D^{\beta} \dee^k v(z) = 0 \qquad \forall |\beta| \leq 1-k. 
	\end{align*}
	Thus, each row of \cref{eq:stokes-complex-gn-geom-decomp} is a complex.
	\Cref{lem:min-h2-space-dofs-bcs,lem:min-h1curl-space-dofs-bcs,%
		lem:min-h1-space-dofs-bcs} show that 
	\begin{align*}
		\dim \tilde{\mathbb{B}}^{1}_{\Gamma_0} = \dim \tilde{\mathbb{B}}^{0}_{\Gamma_0}
		+ \dim \tilde{\mathbb{B}}^{2}_{\Gamma_0},
	\end{align*}
	and so exactness of the final row follows from standard arguments. 
	Consequently, the cohomologies of the first two rows of 
	\cref{eq:stokes-complex-gn-geom-decomp}
	are isomorphic.
\end{proof}

\subsection{Proof of \cref{thm:cohomology-reduced-alfeld-bcs}}
\label{sec:proof-cohomology-reduced-alfeld-bcs}

For $k \in 0:3$, let 
\begin{align*}
	W^{k, h} := \{ v \in L^2(\Omega) \otimes \mathbb{X}^{k} : 
	\dee^k v \in L^2(\Omega) \otimes \mathbb{X}^{k+1}  
	\text{ and }
	v|_{K} \in W^{k, h}(K) \ \forall K \in \mesh\}
\end{align*}
denote the Whitney forms on $\mesh$
equipped with the canonical degrees of freedom $\mathcal{I}^k_{\tau}$, 
$\tau \in \Delta_k(\mesh)$. Set $W_{\Gamma_0}^{k, h} := W^{k, h} \cap W^{k}_{\Gamma_0}$
so that $\{ \mathcal{I}^k_{\tau} : \tau \in \Delta_k(\mesh) 
\setminus \Delta_k(\boundarymesh)\}$,
are a unisolvent set of degrees of freedom on $W_{\Gamma_0}^{k, h}$.
Then, $\tilde{\mathcal{S}}_{\Gamma_0}^{k}$ and $W_{\Gamma_0}^{k, h}$
are isomorphic, and let 
$\pi^k : \tilde{\mathcal{S}}_{\Gamma_0}^{k} \to W_{\Gamma_0}^{k, h}$
denote the isomorphism that maps an element of $\tilde{\mathcal{S}}_{\Gamma_0}^{k}$
to the unique element in $W_{\Gamma_0}^{k, h}$ with the same degrees of freedom.
The generalized Stokes theorem \cref{eq:currents-stokes-thm} then shows 
that the following diagram commutes:
\begin{equation*}
	\begin{tikzcd}[ampersand replacement=\&]
		0 \arrow[r] \arrow[d]
		\& \tilde{\mathcal{S}}_{\Gamma_0}^0 \arrow[r, "\grad"] \arrow[d, "\pi^0"] 
		\& \tilde{\mathcal{S}}_{\Gamma_0}^1 \arrow[r, "\curl"] \arrow[d, "\pi^1"]
		\& \tilde{\mathcal{S}}_{\Gamma_0}^2 \arrow[r, "\div"] \arrow[d, "\pi^2"]
		\& \tilde{\mathcal{S}}_{\Gamma_0}^3 \arrow[r] \arrow[d, "\pi^3"]
		\& 0 \arrow[d] \\
		0 \arrow[r] 
		\& W_{\Gamma_0}^{0, h} \arrow[r, "\grad"] 
		\& W_{\Gamma_0}^{1, h} \arrow[r, "\curl"] 
		\& W_{\Gamma_0}^{2, h} \arrow[r, "\div"]
		\& W_{\Gamma_0}^{3, h} \arrow[r]
		\& 0.
	\end{tikzcd}
\end{equation*}
Thus, the two sequences have isomorphic cohomologies, the second of 
which is isomorphic to \cref{eq:de-rham-complex-bcs-feec} 
\cite[Theorem 1 \& Corollary 2]{Licht17} (see \cite[Example 9]{Licht17}
for the precise application of \cite[Corollary 2]{Licht17}),
which in turn is isomorphic to \cref{eq:stokes-complex-bcs-feec} thanks 
to \cref{eq:harmonic-forms-same-dim-diff-smoothness}.

The exactness of \cref{eq:stokes-complex-reduced-alfeld-element}
follows on taking $\mesh = K_A$ and $\Gamma_0 = \emptyset$ and 
noting that 
$\ker (\grad : \tilde{V}^{0, h}(K_A) \to \tilde{V}^{1, h}(K_A)) = \mathbb{R}$ 
and $b_{k}(K) = 0$ for $k \in 1:3$ since $K$ is contractible.
\hfill \qedsymbol

\begin{remark}
	We could have connected the skeletal complex to the 
	relative simplicial cochain complex as we did in 
	\cref{sec:proof-cohomology-full-spaces-bcs} for 
	the full polynomial complex \cref{eq:stokes-complex-full-alfeld-bcs}.
	Here, we highlight an alternative approach which leverages existing 
	cohomology results for simpler finite element spaces.
\end{remark}

\section{Proof of \cref{thm:bounded-cochain-projections}}
\label{sec:proof-bounded-cochain-projections}

For a face $f\in \Delta_2(\mesh)$
and positive integer $p \in \mathbb{N}_0$, 
denote the 
$L^2(f)$-orthogonal projection operator onto 
$\mathcal{P}_p(f) \otimes \mathbb{X}^k$ by 
$\mathbb{P}^k_{p, f} : L^1(f) \otimes \mathbb{X}^k 
\to \mathcal{P}_p(f) \otimes \mathbb{X}^k$. Similarly, for 
$K \in \mesh$, let 
$\mathbb{P}^k_{p, K} : L^1(K) \otimes \mathbb{X}^k \to V^{k, h}(K_A)$
denote the $L^2(K)$-orthogonal projection operator onto $V^{k, h}(K_A)$. \\

\noindent \textbf{Step 1: Construction. }
Let $z \in \Delta_0(\mesh)$. We choose $f_{z} \in \Delta_2(\mesh)$ with 
$z \subsimplex f_{z}$, $\tau_{z} \in \Delta_2(\mesh) \cup \mesh$
with $z \subsimplex \tau_{z}$, and a basis for $\mathbb{R}^3$ 
$\{ \unitvec{\nu}_{z, j} \}_{j=1}^{3}$ 
as follows:
\begin{enumerate}
	\item[(a)] If $z \in \Delta_0^{\flat}(\boundarymesh)$, let $f_{z} \in
	\Delta_2(\boundarymesh)$, $\tau_z \in \mesh$, $\unitvec{\nu}_{z, j}$,
	$j \in 1:2$, span the tangent plane of $\Gamma_0$ at $z$,
	and $\unitvec{\nu}_{z, 3}$ be the unit outward normal of $\partial \Omega$ 
	at $z$.
	
	\item[(b)] If $z \in \Delta_0(\boundarymesh) 
	\setminus \Delta_0^{\flat}(\boundarymesh)$, 
	let $f_{z}, \tau_{z} \in  \Delta_2(\boundarymesh)$ be not coplanar,
	$\unitvec{\nu}_{z, j}$, $j \in 1:2$ span the tangent plane of $f_z$,
	and $\unitvec{\nu}_{z, 3}$ be in the tangent plane of $\tau_z$
	so that $\{\unitvec{\nu}_{z, j}\}_{j=1}^{3}$ is a basis for $\mathbb{R}^3$. 
	
	\item[(c)] If $z \in \Delta_0(\mesh) \setminus \Delta_0(\boundarymesh)$, 
	let $f_z, \tau_z \in \Delta_2(\mesh)$ be not coplanar and let 
	$\{\unitvec{\nu}_{z, j}\}_{j=1}^{3}$ be as in (b).
\end{enumerate}
We define $S_z^k \in \mathbb{R}^{3\times 3}_{\sym}$, $k \in 0:1$, according to 
\begin{align*}
	S_z^k : \unitvec{\nu}_{z, i} \otimes \unitvec{\nu}_{z, j} &= 
	(\sym\grad_{f_{z}} \mathbb{P}_{4, f_{z}}^1 \grad^{1-k} v^k)(z) 
	: \unitvec{\nu}_{z, i} \otimes \unitvec{\nu}_{z, j}, \\
	S_z^k : \unitvec{\nu}_{z, 3} \otimes \unitvec{\nu}_{z, 3} &= 
	(\sym\grad_{\tau_z} \mathbb{P}_{4, \tau_{z}}^1 \grad^{1-k} v^k)(z) 
	: \unitvec{\nu}_{z, 3} \otimes \unitvec{\nu}_{z, 3}, 
\end{align*}
for all $i \in 1:3$ and $j \in 1:2$,
where we recall that $\grad^0 = I$,
$\grad_{f}$ for $f \in \Delta_2(\mesh)$ is the surface 
gradient (here taking values in $\mathbb{R}^{3 \times 3}$), and we set 
$\grad_K := \grad$ for $K \in \mesh$.

We define $\tilde{\Pi}^k v^k$, $k \in 0:3$, by assigning the degrees of freedom
in \cref{eq:gn-dofs,eq:h1curl-dofs,eq:walkington-dofs} as follows:
\begin{align}
	\label{eq:proof:fortin-current-dofs}
	\mathcal{I}_{\tau}^{k}( \tilde{\Pi}^k v^k) = \mathcal{I}_{\tau}^{k}(v^k)
	\qquad \forall \tau \in \Delta_k(\mesh),
\end{align}
where we note that the above conditions are well-defined since 
$H^2(\Omega)$ is continuously embedded into $C(\bar{\Omega})$
and  $v \mapsto \int_{e} v \cdot \unitvec{t}_e \d{s}$
is a continuous linear functional on $H^1(\curl; \Omega)$ 
\cite[pp. 284-285]{Hiptmair02}.
The remaining vertex degrees of freedom for $z \in \Delta_0(\mesh)$ are given by 
\begin{alignat*}{2}
	\grad \tilde{\Pi}^0 v^0(z) &= (\mathbb{P}_{4, f_z}^{1} \grad v^0)(z),
	\quad &
	\hess \tilde{\Pi}^0 v^0(z) &= S_z^{0}, \\
	\tilde{\Pi}^1 v^1(z) &= (\mathbb{P}_{4, f_z}^{1} v^1)(z),
	\quad & 
	\grad \tilde{\Pi}^1 v^1(z) &= S_z^{1} 
	+ \frac{1}{2} \mskw(\mathbb{P}_{3, f_z}^{2} \curl v^1)(z), \\
	& \quad &
	\tilde{\Pi}^2 v^2(z) &= (\mathbb{P}_{3, f_z}^{2} v^2)(z),
\end{alignat*}	
where 
\begin{align*}
	\mskw(u) := \begin{pmatrix}
		0 & -u_3 & u_2 \\
		u_3 & 0 & -u_1 \\
		-u_2 & u_1 & 0
	\end{pmatrix} \qquad \forall u \in \mathbb{R}^3.
\end{align*}
In particular, we have the formal identity 
$\grad v - (\grad v)^T  = \mskw(\curl v)$. We also note that above the 
conditions are well-defined by the trace theorem.
\\

\noindent \textbf{Step 2: Commutativity. } Thanks to 
\cref{eq:currents-stokes-thm,eq:proof:fortin-current-dofs}, we have 
\begin{align*}
	\mathcal{I}_{\tau}^{k+1}( \dee^k \tilde{\Pi}^k v^k) 
	= \sum_{\eta \in \Delta_{k}(\tau)} \mathcal{O}(\eta, \tau)
	\mathcal{I}_{\tau}^{k}(\tilde{\Pi}^k v^k)
	&= \sum_{\eta \in \Delta_{k}(\tau)} \mathcal{O}(\eta, \tau)
	\mathcal{I}_{\tau}^{k}(v^k) \\
	&= \mathcal{I}_{\tau}^{k+1}( \dee^k v^k) 
	=  \mathcal{I}_{\tau}^{k+1}( \tilde{\Pi}^{k+1} \dee^k v^k)
\end{align*}
for all $\tau \in \Delta_{k+1}(\mesh)$ and $k \in 0:2$.
Moreover, we easily see from the choice of vertex degrees of freedom that 
for all $z \in \Delta_0(\mesh)$, there holds
\begin{align*}
	(\grad^{\ell} \tilde{\Pi}^0 v^0)(z) 
	&= \grad^{\ell-1} (\tilde{\Pi}^1 \grad v^0)(z)
	\qquad \ell \in 1:2 \\
	(\curl \tilde{\Pi}^1 v^1)(z) &= \tilde{\Pi}^2 \curl v^1 (z),
\end{align*}
and so $\tilde{\Pi}^{k+1} \dee^k v^k = \dee^k \tilde{\Pi}^{k} v^k$, 
$k \in 0:2$. \\

\noindent \textbf{Step 3: Trace preservation and projection. } Suppose that 
$v^k \in V^k_{\Gamma_0}$. Then, we have $\mathcal{I}^k_{\tau}(\tilde{\Pi}^k v^k) = 0$
for all $\tau \in \Delta_k(\boundarymesh)$.
Moreover, the choice of vertex
degrees of freedom ensure that
\begin{align*}
	(I - \unitvec{n}_{\Gamma}(z) \otimes \unitvec{n}_{\Gamma}(z)) S_z^k = 0
	\text{ if } z \in \Delta_0^{\flat}(\boundarymesh)
	\quad \text{and} \quad 
	S^k_z = 0 \text{ if } z \in \Delta_0(\boundarymesh) 
	\setminus \Delta_0^{\flat}(\boundarymesh)
\end{align*} 
for $k \in 0:1$ and additionally 
\begin{align*}
	\grad \tilde{\Pi}^0 v^0(z) = \tilde{\Pi}^1 v^1(z) 
	= \tilde{\Pi}^1 \curl v^1(z) = \tilde{\Pi}^2 v^2(z) = 0
	\qquad \forall z \in \Delta_0(\boundarymesh).
\end{align*}
Thus, the degrees of freedom in \cref{rem:remove-dofs-to-get-bcs}
vanish, and so $\tilde{\Pi}^k v^k \in V^k_{\Gamma_0}$.

That $\tilde{\Pi}^k$ is a projection readily follows from the choice 
of degrees and ranges of the operators $\mathbb{P}^k_{p, f_z}$ and 
$\mathbb{P}^k_{p, \tau_z}$. \\

\noindent \textbf{Step 4: Continuity. } Continuity of the operators 
follow from standard scaling and approximation arguments 
(e.g. analogous arguments to proof of 
\cite[Theorem 3.14]{Hiptmair02} and \cite{ScottZhang90}) and are omitted 
for brevity. \hfill \qedsymbol

\bibliographystyle{amsplain}
\bibliography{references}

\end{document}